\documentclass[a4paper,oneside]{article}

\usepackage{amsmath,amssymb,amsthm}
\usepackage{fullpage}
\usepackage{enumitem}
\usepackage{mathtools}
\usepackage[dvipdfmx]{graphicx,xcolor}

\theoremstyle{definition}
\newtheorem{thm}{Theorem}[subsection]
\newtheorem{prp}[thm]{Proposition}
\newtheorem{cor}[thm]{Corollary}
\newtheorem{lem}[thm]{Lemma}

\newtheorem{fct}[thm]{Fact}
\newtheorem{rmk}[thm]{Remark}
\newtheorem{eg}[thm]{Example}
\newtheorem{mth}{Theorem}

\numberwithin{equation}{subsection}
\numberwithin{table}{subsection}
\numberwithin{figure}{subsection}
\allowdisplaybreaks
\setcounter{tocdepth}{2}


\newcommand{\oa}{\overrightarrow}
\newcommand{\ol}{\overline}
\newcommand{\wh}{\widehat}
\newcommand{\wt}{\widetilde}
\newcommand{\ep}{\epsilon}

\newcommand{\hf}{\frac{1}{2}}
\newcommand{\bgeq}{\succcurlyeq_B}
\newcommand{\bleq}{\preccurlyeq_B}

\newcommand{\tprod}{\textstyle \prod}
\newcommand{\tbcup}{\textstyle \bigcup}

\newcommand{\bN}{\mathbb{N}}
\newcommand{\bZ}{\mathbb{Z}}
\newcommand{\bQ}{\mathbb{Q}}
\newcommand{\bR}{\mathbb{R}}

\newcommand{\bK}{\mathbb{K}}

\newcommand{\cL}{\mathcal{L}}

\newcommand{\frh}{\mathfrak{h}}
\newcommand{\frS}{\mathfrak{S}}

\newcommand{\wtt}{\mathrm{wt}}

\DeclareMathOperator{\sh}{sh}

\DeclareMathOperator{\GL}{GL}

\DeclareMathOperator{\des}{des}
\DeclareMathOperator{\hgt}{ht}
\DeclareMathOperator{\wgt}{wt}
\DeclareMathOperator{\dir}{d}

\DeclareMathOperator{\End}{End}

\newcommand{\abs}[1]{\left|{#1}\right|}
\newcommand{\br}[1]{\langle{#1}\rangle}
\newcommand{\pr}[1]{\left\{#1\right\}}
\newcommand{\rst}[2]{\left.#1\right|_{#2}}

\title{\textbf{A Littlewood-Richardson rule for \\ Koornwinder polynomials}}
\author{Kohei YAMAGUCHI\thanks{Graduate School of Mathematics, Nagoya University.
Furocho, Chikusaku, Nagoya, Japan, 464-8602. \newline
\qquad e-mail address: \texttt{d20003j@math.nagoya-u.ac.jp}}}
\date{2020/12/02} 

\begin{document}

\maketitle

\begin{abstract}

Koornwinder polynomials are $q$-orthogonal polynomials equipped with extra five parameters
and the $B C_n$-type Weyl group symmetry, which were introduced by Koornwinder (1992)
as multivariate analogue of Askey-Wilson polynomials.
They are now understood as the Macdonald polynomials associated to the affine root system 
of type $(C^\vee_n,C_n)$ via the Macdonald-Cherednik theory of double affine Hecke algebras. 
In this paper we give explicit formulas of Littlewood-Richardson coefficients for 
Koornwinder polynomials, i.e., the structure constants of the product as invariant polynomials.
Our formulas are natural $(C^\vee_n,C_n)$-analogue of Yip's alcove-walk formulas (2012)
which were given in the case of reduced affine root systems.
\end{abstract}

\tableofcontents

\section{Introduction}\label{s:intro}

\subsection{Koornwindder polynomials}\label{ss:intro:K}

Askey-Wilson polynomials \cite{AW} are $q$-orthogonal polynomials of one variable 
equipped with extra parameters $(a,b,c,d)$, 
which recover various $q$-analogue of Jacobi polynomials by specialization of the parameters. 
In \cite{K}, Koornwinder introduced $n$-variable analogue of Askey-Wilson polynomials,
which are today called \emph{Koornwinder polynomials}.
In the $n=1$ case they coincide with Askey-Wilson polynomials, 
and in the case of $n \ge 2$ they are equipped with extra five parameters $(a,b,c,d,t)$.
By specializing these parameters, 
one can recover Macdonald polynomials \cite{Mp,M} of $B C_n$-types.

Let us give a brief explanation on Macdonald polynomials. 
Let $S$ be an affine root system in the sense of \cite[Chap.\ 1]{M}.
If $S$ is reduced, then $S=S(R)$ or $S=S(R)^\vee$, where $S(R)$ is 
the affine root system associated to an irreducible finite root system $R$,
and $S(R)^\vee$ is the dual of $S(R)$.
In the reduced case, if $R$ is a finite root system of type $X$ 
($X=A_n$, $B_n$, $C_n$, $D_n$, $B C_n$, $E_6$, $E_7$, $E_8$, $F_4$ or $G_2$),
then we call $S(R)$ an \emph{affine root system of type $X$} and 
call $S(R)^\vee$ an \emph{affine root system of type $X^\vee$}.
We also call a reduced $S$ an \emph{untwisted affine root system}.
On the other hand, if $S$ is non-reduced, then it is of the form $S=S_1 \cup S_2$, 
where $S_1$ and $S_2$ are reduced affine root systems.
In the non-reduced case, if $S_1$ and $S_2$ are of type $X$ and $Y$ respectively, 
then we call $S$ an \emph{affine root system of type $(X,Y)$}.

The \emph{Macdonald polynomial $P_\lambda(x)$} is a $q$-orthogonal polynomial
which is a simultaneous eigenfunction of a family of $q$-difference operators 
associated to an affine root system $S$.
Today Macdonald polynomials are formulated by the \emph{Macdonald-Cherednik theory},
which is based on the representation theory of affine Hecke algebras.
This theory was first developed for untwisted affine root systems.
Below we call $P_\lambda(x)$ \emph{Macdonald polynomial of type $X$}
if the corresponding untwisted affine root system $S$ is of type $X$.

Let us go back to Koornwinder polynomials.
By the works of Noumi \cite{N}, Sahi \cite{Sa}, Stokman \cite{S} and others, 
it is clarified that one can apply Macdonald-Cherednik theory to the non-reduced
affine root system of type $(C^\vee_n,C_n)$ in the sense of \cite[Chap.\ 1]{M}, 
and that one can recover Koornwinder polynomials 
as Macdonald polynomials of type $(C^\vee_n,C_n)$.
As a result, Koornwinder polynomials are characterized as 
the ones having most parameters in the family of Macdonald polynomials.

For the convenience of the following explanation, 
let us give a brief account on the notations used in this paper.
First, we introduce the notations for the root system $R$ of type $C_n$. 
See \S \ref{sss:r:Cn} for details.  
Let $\frh_\bZ^* := \bigoplus_{i=1}^n \bZ \ep_i$ be a lattice of rank $n$.
We denote the set of roots by
$R:=\pr{\pm\ep_i\pm\ep_j \mid i \neq j} \cup 
    \pr{\pm2\ep_i \mid i=1,\dots,n} \subset \frh_\bZ^*$
and denote simple roots by $\alpha_i \in R$ ($i=1,\ldots,n$).
We define the inner product on $\frh_\bZ^*$ by $\br{\ep_i ,\ep_j}:=\delta_{i,j}$,
and define the fundamental weights $\omega_i \in \frh_\bZ^*$ 
by $\br{\omega_i,\alpha_j}=\delta_{i,j}$.
Note that the weight lattice is 
$P := \bZ\omega_1 \oplus \cdots \oplus \bZ\omega_n = \frh_\bZ^*$.
We denote the set of dominant weights by 
$(\frh_\bZ^*)_+:= \pr{\mu\in \frh_\bZ^* \mid \br{\alpha_i^\vee,\mu}\geq0,\ i=1,\dots,n}
 \subset \frh_\bZ^*$.
Here $\alpha_i^\vee$ is the coroot corresponding to $\alpha_i$.
We also denote by $W_0$ the finite Weyl group.

Next we introduce the notations for the affine root system $S$ of type $(C^\vee_n,C_n)$,
and explain the parameters of Koornwinder polynomials.
See \S \ref{sss:r:CvC} and \S \ref{sss:AH:N} for details.
By considering the extension $\wt{\frh}_\bZ^* := \frh_\bZ^* \oplus \bZ \delta$ of 
the lattice $\frh_\bZ^*$, we have the affine root system 
$S:=  \{\pm\ep_i+\frac{k}{2}\delta, \pm2\ep_i+k \delta \mid k \in \bZ,\ i=1,\ldots,n\} 
 \cup \{\pm\ep_i\pm\ep_j+k\delta \mid k \in \bZ, 1 \le i<j\le n\} \subset 
 \wt{\frh}_\bZ^* \otimes \bQ$
of type $(C^\vee_n,C_n)$ and the extended affine Weyl group $W$.
By using the group ring $t(P)$ of the weight lattice $P=\frh_\bZ^*$,
we can present the group $W$ as $W = t(P) \rtimes W_0$.
In the case of rank $n \ge 2$, there are five orbits for the action of $W$ on $S$,
and we consider the parameters associated to these orbits,
denoting them by $(t_0,t,t_n,u_0,u_n)$.
By adding the parameter $q$ and the square root of each parameter,
we define the base field $\bK$ by 
\[
 \bK:=\bQ(q^{\hf},t^{\hf},t_{0}^{\hf},t_{n}^{\hf},u_{0}^{\hf},u_{n}^{\hf}).
\]
Koornwinder polynomials have these five plus one parameters $(q,t_0,t,t_n,u_0,u_n)$.
In the case of rank $n=1$, there are four $W$-orbits,
and the parameters are $(q, t_0,t_n,u_0,u_n)$.
In this case Koornwinder polynomials are equivalent to 
Askey-Wilson polynomials as mentioned before.
By \cite[\S 3]{N} and \cite[(5.2)]{S}, we have the following correspondence to
the original parameters $(q,a,b,c,d)$ of Askey-Wilson polynomials.
\begin{align}\label{eq:AW-K}
 (q,a,b,c,d) = 
 (q,q^\hf t_0^\hf u_0^\hf, -q^\hf t_0^\hf u_0^{-\hf},
  t_n^\hf u_n^\hf, -t_n^\hf u_n^{-\hf}).
\end{align}

For a family $x=(x_1,\ldots,x_n)$ of commutative variables, 
we denote the Laurent polynomial ring of $x_i$'s by $\bK[x^{\pm1}]$.
The finite Weyl group $W_0$ acts on $\bK[x^{\pm1}]$ naturally,
and we denote the invariant ring by $\bK[x^{\pm1}]^{W_0}$.
For a dominant weight $\lambda \in (\frh_\bZ^*)_+$, 
we denote the (monic) Koornwinder polynomials by
\[
 P_\lambda(x) \in \bK[x^{\pm1}]^{W_0}.
\]
We sometimes denote $P_\lambda:=P_\lambda(x)$ for simplicity.
The definition of the Koornwinder polynomial $P_\lambda(x)$ 
will be explained in \S \ref{sss:AH:K}, 
after the review of the affine Hecke algebra $H(W)$ in \S \ref{sss:AH:N} 
and of the double affine Hecke algebra $DH(W)$ in \S \ref{sss:AH:DH}.

\subsection{Littlewood-Richardson coefficients}\label{ss:intro:LR}

The understanding of Macdonald polynomials has been rapidly advanced
since the emergence of the Macdonald-Cherednik theory.
Currently Macdonald polynomials, in particular those of type $A$, 
appear in various fields in mathematics, and have increasing importance. 
However, the study of Koornwinder polynomials seems to be less advanced than 
the Macdonald polynomials of the other root systems, 
and there are many pending problems for the $(C_n^\vee,C_n)$-type.

In this paper, we consider \emph{Littlewood-Richardson coefficients} $c_{\lambda,\mu}^\nu$ 
of Koornwinder polynomials $P_\lambda$, that is the structure constants of the product
in the invariant ring $\bK[x^{\pm1}]^{W_0}$:
\[
 P_\lambda P_\mu = \sum_\nu c_{\lambda,\mu}^\nu P_\nu.
\]
Hereafter we call $c_{\lambda,\mu}^\nu$ \emph{LR coefficients} for simplicity.

Let us recall what is known in the case of type $A$.
The classical LR coefficients are the structure constants of the product 
$s_\lambda s_\mu = \sum_\nu c_{\lambda,\mu}^\nu s_\nu$
of Schur polynomials $s_\lambda$ in the ring of symmetric polynomials. 
We have explicit formulas for the classical LR coefficients via Young tableaux.
Regarding Schur polynomials as the irreducible characters of the general linear group,
we can interpret the coefficient $c_{\lambda,\mu}^\nu$ as 
the multiplicity  of the irreducible decomposition of the tensor representation. 
For Hall-Littlewood polynomials, which are $t$-deformations of Schur polynomials,
we can also consider the LR coefficients $c_{\lambda,\mu}^\nu$, 
and some explicit formulas are known. See \cite[Chap.\ II, (4.11)]{Mb} for example.

Although Macdonald polynomial of type $A$ is a $q$-deformation of Hall-Littlewood polynomial,
no explicit formula for the corresponding LR coefficient $c_{\lambda,\mu}^\nu$ 
had been unknown for a long time.
In \cite[Chap.\ VI, \S 6]{Mb}, Macdonald derived some combinatorial formulas for 
\emph{Pieri coefficients} using arms and legs of Young diagrams.
Here Pieri coefficients mean the LR coefficients $c_{\lambda,\mu}^\nu$
with $\lambda$ the one-row type $(k)$ or the one-column type $(1^l)$,
where the weights are identified with Young diagrams or the partitions.

On the LR coefficients of Macdonald polynomials, Yip \cite{Y} made a great progress. 
Using \emph{alcove walks}, an explicit formula of $c_{\lambda,\mu}^\nu$ is given 
in \cite[Theorem 4.4]{Y} for the Macdonald polynomials of untwisted affine root systems.
Also a simplified formula \cite[Corollary 4.7]{Y} is derived 
in the case $\lambda$ is equal to a minuscule weight.
In particular, this simplified formula recovers Macdonald's formula 
for Pieri coefficients of type $A$ \cite[Theorem 4.9]{Y}.
In Yip's study, the key ingredient is the notion of alcove walk,
originally introduced by Ram \cite{R}.
We will explain the relevant notations and terminology in \S \ref{sss:r:aw}.

\subsection{Main result}

The main result of this paper is the following Theorem \ref{thm:main},
which is a natural $(C^\vee_n,C_n)$-type analogue of 
Yip's alcove walk formulas for LR coefficients in \cite[Theorem 4.4]{Y}.
Let us prepare the necessary notations and terminology for the explanation.

Let $A$ be the fundamental alcove of the extended affine Weyl group $W$ (see \eqref{eq:FA}).
Given an element $w\in W$, we take a reduced expression $w=s_{i_1}\cdots s_{i_r}$.
Given a bit sequence $b=(b_1,\ldots,b_r) \in \{0,1\}^r$ and another element $z \in W$,
we call a sequence of alcoves of the form 
\[
 p = \bigl(p_0:=z A,\ 
           p_1:=z s_{i_1}^{b_1}A,\ 
           p_2:=z s_{i_1}^{b_1}s_{i_2}^{b_2}A,
           \ \ldots, \ p_r:=z s_{i_1}^{b_1} \cdots s_{i_r}^{b_r}A\bigr)
\]
an \emph{alcove walk of type $\oa{w}:=(i_1,\dots,i_r)$ beginning at $z A$}.
We denote by $\Gamma(\oa{w},z)$ the set of such alcove walks.
See Example \ref{eg:ap} for examples of alcove walks.

For an alcove walk $p$, we call the transition $p_{k-1} \to p_k$ \emph{the $k$-th step pf $p$}.
The $k$-th step of $p$ is called a \emph{folding} if $b_k=0$ where 
the bit sequence $b$ corresponds to the alcove walk $p$ (see Table \ref{tab:bp}).

In our main result, we use \emph{a colored alcove walk} introduced by Yip \cite{Y}.
It is an alcove walk equipped with the coloring of folding steps by either black or gray.
We denote by $\Gamma_2^{C}(\oa{w},z)$ the set of colored alcove walks 
whose steps belong to the dominant chamber $C \subset \frh_\bR^* := \frh_\bZ^* \otimes \bR$.

\begin{mth}[{Theorem \ref{thm:LR}}]\label{thm:main}
Let $\lambda, \mu \in \frh_\bZ^*$ be dominant weights, $W_\mu$ be 
the stabilizer of $\mu$ in the finite Weyl group $W_0$ (see \eqref{eq:W_mu}), 
and $W^\mu$ be the complete system of representatives of $W_0/W_\mu$ 
such that the length of each element is shortest in $W_0$ (see \eqref{eq:W^mu}).
Let also $W_\lambda(t)$ be the Poincar\'e polynomial of the stabilizer $W_\lambda$ 
(see \eqref{eq:Wmu(t)}).
We take a reduced expression of the element $w(\lambda) \in W$ in \eqref{eq:w(mu)}.
Then we have
\[
 P_\lambda P_\mu =
 \frac{1}{t_{w_\lambda}^{-\hf} W_\lambda(t)} \sum_{v \in W^\mu} 
 \sum_{p \in \Gamma^C_2(\oa{w(\lambda)}^{-1},(v w(\mu))^{-1})} 
 A_p B_p C_p P_{-w_0.\wtt(p)}.
\]
Here $w_0 \in W_0$ is the longest element, and
the weight $\wtt(p) \in \frh_\bZ^*$ is determined from the element $e(p) \in W$ 
corresponding to the end of the colored alcove walks $p$ as in \eqref{eq:wtdr}.
The coefficients $A_p$, $B_p$ and $C_p$ are factorized, and we have 
\begin{align*}
 A_p := \prod_{\alpha \in w(\mu)^{-1}\cL(v^{-1},v_\mu^{-1})} \rho(\alpha), \quad
 B_p := \prod_{\alpha \in \cL(t(\wgt(p))w_0, e(p))} \rho(-\alpha).
\end{align*}
Here the term $\rho(\alpha)$ is given by 
\begin{align*}
&\rho(\alpha) :=
 \begin{dcases}
  t^\hf 
  \frac{1-t^{-1}q^{\sh(-\alpha)}t^{\hgt(-\alpha)}}
       {1-q^{\sh(-\alpha)}t^{\hgt(-\alpha)}} & (\alpha \not\in W.\alpha_n) \\
  t_n^\hf
  \frac{(1+t_0^{\hf}t_n^{-\hf}q^{\hf\sh(-\alpha)}t^{\hf\hgt(-\alpha)})
        (1-t_0^{-\hf}t_n^{-\hf}
        q^{\hf\sh(-\alpha)} t^{\hf\hgt(-\alpha)})}
       {1-q^{\sh(-\alpha)}t^{\hgt(-\alpha)}} & (\alpha \in W.\alpha_n)
 \end{dcases}, \\
&q^{ \sh(\alpha)}:=q^{-k}, \ 
 t^{\hgt(\alpha)}:=\tprod_{\gamma\in R_+^s}t^{\hf\br{\gamma^\vee,\beta}}
 \tprod_{\gamma\in R_+^{\ell}}(t_0t_n)^{\hf\br{\gamma^\vee,\beta}}
 \quad (\alpha= \beta+k\delta \in S),
\end{align*}
where we used 
$R_+^s:=\pr{\ep_i\pm\ep_j \mid 1 \le i<j \le n}$ and
$R_+^\ell:=\pr{2\ep_i \mid 1 \le i \le n}$.
For the notation $\cL$, see \eqref{eq:cL(v,w)} in \S \ref{sss:r:aw}.
Finally the term $C_p$ is given by $C_p=\prod_{k=1}^r C_{p,k}$ with the factor $C_{p,k}$ 
determined from the $k$-th step of the alcove walk $p$ in Proposition \ref{prp:EPE}.
Here we display the relevant formulas for $C_{p,k}$: 
\begin{align*}
 \psi_i^{\pm}(z) &:=
 \mp\frac{t^{\hf}-t^{-\hf}}{1-z^{\pm1}}
 \qquad (i=1,\ldots,n-1),\\
 \psi_{0}^{\pm}(z) &:=
 \mp\frac{(u_n^\hf-u_n^{-\hf})+z^{\pm\hf}(u_0^\hf-u_0^{-\hf})}{1-z^{\pm1}},\quad
 \psi_{n}^{\pm}(z)  :=
 \mp\frac{(t_n^\hf-t_n^{-\hf})+z^{\pm\hf}(t_0^\hf-t_0^{-\hf})}{1-z^{\pm1}}, \\
 n_i(z) &:=
 \frac{1-t z}{1-z} \frac{1-t^{-1}z}{1-z}
 \hspace{13.7em} (\beta\in W.\alpha_i,\ i=1,\ldots,n-1), \\
 n_0(z) &:=
 \frac{(1-u_n^\hf u_0^\hf z^\hf)(1+u_n^\hf u_0^{-\hf} z^\hf)}{1-z}
 \frac{(1+u_n^{-\hf} u_0^\hf z^\hf)(1-u_n^{-\hf} u_0^{-\hf} z^\hf)}{1-z}
 \quad (\beta\in W.\alpha_0), \\
 n_n(z) &:=
 \frac{(1-t_n^\hf t_0^\hf z^\hf)(1+t_n^\hf t_0^{-\hf} z^\hf)}{1-z}
 \frac{(1+t_n^{-\hf} t_0^\hf z^\hf)(1-t_n^{-\hf} t_0^{-\hf} z^\hf)}{1-z}
 \hspace{2.2em} (\beta\in W.\alpha_n).
\end{align*}
\end{mth}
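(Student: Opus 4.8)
The plan is to adapt Yip's argument \cite{Y} to the double affine Hecke algebra $DH(W)$ of type $(C^\vee_n,C_n)$ reviewed in \S\ref{sss:AH:DH}, organized so that the three factors $A_p$, $B_p$, $C_p$ arise from three separate sources: (i) expanding the symmetric $P_\mu$ over its $W_0$-orbit in terms of nonsymmetric Koornwinder polynomials $E$ — this produces $A_p$ and fixes the alcove at which the walk begins; (ii) acting on each such $E$ by the intertwiner word attached to $\lambda$ and expanding the result over colored alcove walks via Proposition \ref{prp:EPE} — this produces $C_p$; and (iii) resymmetrizing each endpoint back into the basis $\{P_\nu\}$ — this produces $B_p$ and the dominant index $-w_0.\wtt(p)$.

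For step (i), using the Hecke symmetrizer $\mathbf{1}_t$ together with the intertwiner relations among the $E$'s inside a single $W_0$-orbit, I would record the expansion
\[
 P_\mu = \sum_{v \in W^\mu} A_v\, E_{v\mu}, \qquad
 A_v = \prod_{\alpha \in w(\mu)^{-1}\cL(v^{-1},v_\mu^{-1})} \rho(\alpha),
\]
where summing over the shortest coset representatives $v\in W^\mu$ of $W_0/W_\mu$ makes a separate $W_\mu(t)$-normalization unnecessary, $v_\mu$ denotes the longest element of $W^\mu$ (so that $v_\mu\mu=w_0\mu$), and $A_v$ is exactly the product of the intertwiner ratios $\rho$ over the inversion-set difference $\cL(v^{-1},v_\mu^{-1})$ (see \eqref{eq:cL(v,w)}) transported to affine roots by $w(\mu)^{-1}$; this is the $A_p$ of the statement. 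For step (ii), by \eqref{eq:w(mu)} and the construction in \S\ref{sss:AH:K} the symmetric $P_\lambda$ is, up to the normalization $t_{w_\lambda}^{-\hf}W_\lambda(t)$, the $\mathbf{1}_t$-symmetrization of the intertwiner word of type $\oa{w(\lambda)}$ applied to $1$; applying that word to $E_{v\mu}$ and repeatedly using the crossing relation $x^\beta T_i = T_i x^{s_i\beta} + (\text{difference term})$ turns each of the $r$ letters into a choice recorded by a bit $b_k\in\{0,1\}$, i.e.\ produces a sum over alcove walks of type $\oa{w(\lambda)}^{-1}$ beginning at the alcove $(v\,w(\mu))^{-1}A$ that records $\mu$ and $v$. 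A straight step contributes a monomial; a folding step contributes one of the intertwiner coefficients $\psi_i^{\pm},\psi_0^{\pm},\psi_n^{\pm}$; and because $T_i$ splits into an identity-like part and a difference-like part — which after resymmetrization either remains folded or is absorbed, whence the colors black and gray — a folding step also accounts for the norm $n_i,n_0,n_n$ (indeed $n_i(z)=1-\psi_i^+(z)\psi_i^-(z)$, and likewise for $n_0,n_n$, so the two colors record the two terms of this identity). The constraint that all steps lie in the dominant chamber $C$ is precisely what $\mathbf{1}_t$ imposes. Collecting the per-step factors gives $C_p=\prod_{k=1}^r C_{p,k}$ as in Proposition \ref{prp:EPE}, the spectral variable of each $\psi,n$ being the $Y$-eigenvalue $q^{\sh}t^{\hgt}$ of the relevant $E$ (the short/long split $R_+^s,R_+^\ell$ in $t^{\hgt}$ being forced by the two node-lengths of $(C^\vee_n,C_n)$).

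For step (iii), each walk $p$ ends at an alcove $e(p)A$; writing $e(p)=t(\wgt(p))w_0\cdot(\text{a shorter element})$ and resymmetrizing the endpoint polynomial back into $\{P_\nu\}$ produces the dominant weight $-w_0.\wtt(p)$ (with $\wtt(p)$ read off from $e(p)$ as in \eqref{eq:wtdr}, the $-w_0$ matching the antidominant convention used for the $P$'s) together with $B_p=\prod_{\alpha\in\cL(t(\wgt(p))w_0,\,e(p))}\rho(-\alpha)$, again a product of intertwiner ratios along the shortest path from $e(p)$ to the dominant chamber. Assembling (i)--(iii) — the prefactor $\tfrac{1}{t_{w_\lambda}^{-\hf}W_\lambda(t)}$, the sum over $v\in W^\mu$ weighted by $A_p$, the sum over colored alcove walks weighted by $C_p$, and the resymmetrization weights $B_p$ on $P_{-w_0.\wtt(p)}$ — yields the stated identity; that every coefficient of $P_\nu$ in fact lies in $\bK$ follows from the cancellation of the apparent denominators $1-q^{\sh}t^{\hgt}$ across the sum, or simply because $P_\lambda P_\mu\in\bK[x^{\pm1}]^{W_0}$ and the $P_\nu$ form a $\bK$-basis of it.

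The main obstacle is the non-reduced node. In the reduced case treated in \cite{Y} every folding contributes, according to color, either the coefficient with numerator $t^{\hf}-t^{-\hf}$ or the single norm $\tfrac{(1-tz)(1-t^{-1}z)}{(1-z)^2}$, whereas here the two short nodes $0$ and $n$ carry the extra parameters $(t_0,u_0)$ and $(t_n,u_n)$, so that $\rho$ bifurcates according to whether $\alpha\in W.\alpha_n$ and the intertwiners $\tau_0,\tau_n$ acquire the two-term coefficients $\psi_0^{\pm},\psi_n^{\pm}$ and the quartic norms $n_0,n_n$. One must therefore (a) establish in $DH(W)$ the intertwiner relations and their norms at the nodes $0$ and $n$, checking that the quadratic relations for $T_0,T_n$ still split into an identity-like and a difference-like part compatibly with the two colors; (b) normalize so that the $Y$-eigenvalue at each $E$ is exactly $q^{\sh}t^{\hgt}$ with the correct short/long split, which is where treating the half-integer shifts $\tfrac{k}{2}\delta$ occurring in $S$ correctly inside $\sh(\alpha)$ becomes delicate; and (c) keep the bookkeeping between $w(\lambda)$, its reversed inverse word $\oa{w(\lambda)}^{-1}$, and the orbit data $v,\,w(\mu)$ coherent throughout. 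The rest is a careful but essentially routine transcription of the reduced computation.
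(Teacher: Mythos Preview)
Your three-step outline --- expand $P_\mu=\sum_{v\in W^\mu}A_v E_{v\mu}$ via Lemma~\ref{lem:PmuEvmu}, feed each $E_{v\mu}$ through the $\lambda$-word to produce colored walks and $C_p$ as in Proposition~\ref{prp:EPE}, then resymmetrize the endpoint via Lemma~\ref{lem:USS1} to obtain $B_p$ and $P_{-w_0.\wtt(p)}$ --- is exactly the paper's strategy, and your identification of which factor comes from which step is correct.

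The gap is in your account of step~(ii), i.e.\ of the mechanism behind Proposition~\ref{prp:EPE}. The colored walks do \emph{not} arise from the Lusztig relation $x^\beta T_i=T_ix^{s_i\beta}+\cdots$, nor do the two colors record the two terms of an identity $n_i=1-\psi_i^+\psi_i^-$. The actual mechanism is a superposition of two separate alcove-walk expansions. First one invokes the Ram--Yip formula (Fact~\ref{fct:RY}) to write $E_\lambda=\sum_{h\in\Gamma(\oa{w(\lambda)},e)} f_h\,t_{\dir(h)}^{1/2}x^{\wgt(h)}$; the identity $T_iU=t_i^{1/2}U$ then converts each term into the Hecke-algebra element $x^{e(h)}$ of~\eqref{eq:xzT} acting on $US^Y_{w(\mu)}1$. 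Second, Proposition~\ref{prp:xzSY} --- which iterates $(T_i^\vee)^{\pm1}=S_i^Y-\psi_i^\pm(Y^{-\alpha_i})$ together with Proposition~\ref{prp:SYiw}, not Lusztig --- expands each $x^{e(h)}S^Y_{vw(\mu)}$ over a second family of walks $q$. A folding of the combined walk $p$ is colored \emph{black} if the Ram--Yip walk $h$ already folded there (contributing $\psi^\pm_{i_k}$ evaluated at the fixed root $-\beta_k=s_{i_r}\cdots s_{i_{k+1}}(\alpha_{i_k})$), and \emph{gray} if $h$ crossed but $q$ folded (contributing $-\psi^\pm_{i_k}$ evaluated at the walk-dependent root $-h_k(p)$). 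Your description cannot produce these two distinct spectral arguments, which is precisely the content of the $C_{p,k}$ table. Relatedly, the restriction to walks in the dominant chamber $C$ is not imposed by the symmetrizer: it comes earlier, in Corollary~\ref{cor:xmuE}, from the vanishing of the $n_{i_k}$ factors when a descent-crossing leaves $C$.
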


Note that the term $A_p$ actually depends only on $v \in W^\mu$, 
which corresponds to the beginning of the colored alcove walk $p$.

Let us explain the outline of proof of Theorem \ref{thm:main}.
We denote by $E_\mu(x) \in \bK[X^{\pm1}]$ \emph{the non-symmetric Koornwinder polynomials} 
\cite{Sa,S}, which will be explained in \S \ref{sss:AH:DH}.
Here we need the following two properties.
\begin{itemize}[nosep]
\item
$\{ E_\mu(x) \mid \mu \in \frh_\bZ^*\}$ is a $\bK$-basis of $\bK[x^{\pm1}]$.
\item 
$P_\mu(x)$ is obtained by symmetrizing $E_\mu(x)$  (Fact \ref{fct:Emu}).
More precisely, using the symmetrizer $U$ in \eqref{eq:U}, we have
\[
 P_\mu(x) = \frac{1}{t_{w_\mu}^{-\hf}W_\mu(t)} U E_\mu(x).
\]
\end{itemize}
The outline of proof is a straight $(C_n^\vee, C_n)$-type analogue of 
Yip's derivation in \cite{Y}.
The argument can be divided into four steps, and below we explain them
abbreviating some coefficients and ranges of summations.
\begin{enumerate}[nosep,label=(\roman*)]
\item \label{i:intro:pf1}
For dominant weights $\lambda,\mu \in (\frh_\bZ^*)_+$,
we derive an expansion formula
\[
 x^\mu E_\lambda(x) = \sum_{p \in \Gamma^C} c_p E_{\varpi(p)}(x)
\]
of the product of the non-symmetric Koornwinder polynomial $E_\lambda(x)$ 
and the monomial $x^\mu$ (Corollary \ref{cor:xmuE}).
Here the index set $\Gamma^C$ consists of alcove walks belonging to the dominant chamber $C$.
The symbol $\varpi(p) \in (\frh_\bZ^*)_+$ will be given in \eqref{eq:varpi}.

\item \label{i:intro:pf2}
We use \emph{Ram-Yip type formula} (Fact \ref{fct:RY}),  an expansion formula
for the non-symmetric Koornwinder polynomials in terms of monomials:
\[
 E_\mu(x) = \sum_{p \in \Gamma} f_p t_{\dir(p)}^{\hf}x^{\wgt(p)}.
\]
This formula was derived by Orr and Shimozono  \cite{OS},
based on the work of Ram and Yip \cite{RY} on the same type formula
for the untwisted affine root systems.

\item \label{i:intro:pf3}
Using \ref{i:intro:pf1} and  \ref{i:intro:pf2},
we can calculate the product of the non-symmetric Koornwinder polynomial
$E_\mu(x)$ and the Koornwinder polynomial $P_\lambda(x)$ 
in an extension $\wt{DH}(W)$ of the double affine Hecke algebra $DH(W)$,
and express it as a sum over alcove walks \eqref{eq:EUS1}.
Then we can rewrite it as a sum over colored alcove walks 
and have (Proposition \ref{prp:EPE}):
 \[
 E_\mu(x) P_\lambda(x) = 
 \sum_{v \in W^\lambda} \sum_{p \in \Gamma^C_2} A_p C_p E_{\varpi(p)}(x).
\]

\item \label{i:intro:pf4}
Theorem \ref{thm:main} is obtained by symmetrizing $E_\mu(x)$ in \ref{i:intro:pf3}
and switching $\lambda \leftrightarrow \mu$.
\end{enumerate}


\subsection{Organization and notation}

\subsubsection*{Organization}

We explain the organization of this paper.

In \S \ref{s:K},
we explain Koornwinder polynomials based on the Macdonald-Cherednik theory.
In \S \ref{ss:r}, we explain the root system and alcove walks.
We introduce the root system $R$ of type $C_n$ in \S \ref{sss:r:Cn}, 
the affine root system $S$ of type $(C^\vee_n,C_n)$ in \S \ref{sss:r:CvC}, 
and alcove walks in \S \ref{sss:r:aw}.
In the next \S \ref{ss:AH}, we explain affine Hecke algebras and Koornwinder polynomials.
We introduce the affine Hecke algebra $H(W)$ of type $(C^\vee_n,C_n)$ in \S \ref{sss:AH:N},
and review the basic representation constructed by Noumi \cite{N}.
Then we introduce the double affine Hecke algebra $DH(W)$ of type $(C^\vee_n,C_n)$ 
in \S \ref{sss:AH:DH}, 
and explain the non-symmetric Koornwinder polynomials $E_\lambda$ (Fact \ref{fct:Emu}).
Finally we introduce Koornwinder polynomials $P_\lambda$ in \S \ref{sss:AH:K} (Fact \ref{fct:Pmu}).

In \S \ref{s:LR}, we derive our main Theorem \ref{thm:LR}.
The outline of the discussion is given by the four steps \ref{i:intro:pf1}--\ref{i:intro:pf4} 
previously explained, and the organization of \S \ref{s:LR} follows that.
 
In \S \ref{s:special}, we derive several corollaries of the main Theorem \ref{thm:LR}.
In \S \ref{ss:sp:AW}, we discuss the case of rank $n=1$, 
that is the case of Askey-Wilson polynomials.
In particular, we give a simplified formula for the Pieri coefficient (Proposition \ref{prp:AW_P}),
and recover the recurrence formula of Askey-Wilson polynomials in \cite{AW} from 
our Pieri formula (Remark \ref{rmk:AW2}).
In \S \ref{ss:sp:HL}, we discuss the Hall-Littlewood limit $q \to 0$,
and show that LR coefficients are somewhat simplified (Proposition \ref{prp:HL}).
In \S \ref{ss:rank2} we display examples of LR coefficients in the case of rank $n=2$.

\subsubsection*{Notation and terminology}

Here are the notations and terminology used throughout in this paper. 
\begin{itemize}[nosep]
\item
We denote by $\bZ$ the ring of integers,
by $\bN = \bZ_{\ge 0} := \{0,1,2,\ldots\}$ the set of non-negative integers,
by $\bQ$ the field of rational numbers, and  by $\bR$ the field of real numbers.

\item
We denote by $e$ the unit of a group.
\item
We denote an action of a group $G$ on a set $S$ by 
$g.s$ for $g \in G$ and $s \in S$,
and denote the $G$-orbit of $s$ by $G.s$ or by $G s$.
\item
For a commutative ring $k$ and a family of commutative variants $x=(x_1,x_2,\ldots)$,
we denote by $k[x^{\pm1}]$ the Laurent polynomial ring $k[x_1^{\pm1},x_2^{\pm1},\ldots]$.
\item
We denote by $\delta_{i,j}$ the Kronecker delta.
\end{itemize}

\section{Koornwinder polynomials}\label{s:K}

\subsection{Root systems}\label{ss:r}

\subsubsection{Root systems for type $C_n$}\label{sss:r:Cn}

Let $(R,\frh_{\bZ}^*,R^\vee,\frh_{\bZ})$ be the root data of type $C_n$.
Thus $\frh_\bZ=\bigoplus_{i=1}^n \bZ \ep_i^\vee$ and
$\frh_\bZ^*=\bigoplus_{i=1}^n \bZ \ep_i$ are lattices of rank $n$, 
and we have the non-degenerate bilinear form 
$\br{\ ,\ }:\frh_{\bZ}\times\frh_{\bZ}^* \to \bZ$, $\br{\ep_i^\vee,\ep_j}=\delta_{i,j}$.
We identify $\frh_\bZ^* = \frh_\bZ$ and $\ep_i = \ep_i^\vee$ 
by this bilinear form $\br{\ ,\ }$. 
The set $R$ of roots and the set $R^\vee$ of coroots are given by
\begin{align*}
     R&=\pr{\pm\ep_i\pm\ep_j \mid i \neq j} \cup \pr{\pm2\ep_i \mid i=1,\dots,n}
         \subset \frh_\bZ^*, \\
R^\vee&=\pr{\pm\ep_i\pm\ep_j \mid i \neq j} \cup \pr{\pm \ep_i \mid i=1,\dots,n}
         \subset \frh_\bZ.
\end{align*}
We use the following choice of
the subset $R_+ \subset R$ of positive roots 
and the subset $R^\vee_+ \subset R^\vee$ of positive coroots.
\[
 R_+:=\pr{\ep_i\pm\ep_j \mid i< j}\cup\pr{2\ep_i \mid i=1,\dots,n}, \quad 
 R_+^\vee:=\pr{\ep_i\pm\ep_j \mid i< j}\cup\pr{\ep_i \mid i=1,\dots,n},
\]
We have $R=R_+\sqcup-R_+$ and $R^\vee=R_+^\vee\sqcup -R_+^\vee$.
The simple roots $\alpha_i \in R$ ($i=1,\ldots,n$) are given by
\[
 \alpha_1:=\ep_1-\ep_2,\ \ldots, \ \alpha_{n-1}:=\ep_{n-1}-\ep_n, \ \alpha_n:=2\ep_n.
\]
For each root $\alpha \in R$, we denote the associated coroot by 
$\alpha^\vee := 2\alpha/\br{\alpha,\alpha} \in \frh_\bZ^*=\frh_\bZ$.
The correspondence $\alpha \mapsto \alpha^\vee$ is a bijection, 
and we have $\br{\alpha^\vee,\alpha}=2$.
The coroots for simple roots are
$\alpha^\vee_1=\ep_1-\ep_2$, $\ldots$, $\alpha^\vee_{n-1}=\ep_{n-1}-\ep_n$, 
and $\alpha^\vee_n=\ep_n$. 
We call $\alpha^\vee_i$ simple coroots.

For $\alpha \in R$, we write $s_\alpha$ the reflection by the 
hyperplane $H_{\alpha}:=\pr{x\in \frh_\bR^* \mid \br{\alpha^\vee,x}=0}$
in $\frh^*_\bR=\frh^*_\bZ\otimes_{\bZ}\bR$.
That is, 
\[
 s_\alpha.x := x-\br{\alpha^\vee,x}\alpha,\quad x\in\frh_\bR^*.
\]
We write $s_i:=s_{\alpha_i}$ for $i=1,\dots,n$.
The finite Weyl group $W_0$ is defined to be the subgroup of
$\GL(\frh_\bR^*)$ generated by $s_1,\ldots,s_n$.
As an abstract group, 
$W_0$ is a Coxeter group with generators $s_1,\ldots,s_n$ and relations
\begin{align*}
s_i^2=1 &\quad(i=1,\dots,n),\\
s_is_j=s_js_i &\quad(\abs{i-j}>1),\\
s_is_{i+1}s_i=s_{i+1}s_i s_{i+1} &\quad(i=1,\dots,n-2),\\
s_{n-1}s_ns_{n-1}s_n=s_ns_{n-1}s_ns_{n-1}. &
\end{align*}

Next we introduce notation for weights of the root system of type $C_n$.
For $i=1,\dots,n$, we define $\omega_i:=\ep_1+\dots+\ep_i\in\frh_\bZ^*$, 
and call them the fundamental weights. 
Then we have $\br{\alpha_i^\vee,\omega_j}=\delta_{i,j}$ for $i,j=1,\dots,n$. 
We define the root lattice $Q$ and the weight lattice $P$ by
\begin{align}\label{eq:QhPh*}
 Q := \bZ\alpha_{1}\oplus\cdots\oplus\bZ\alpha_{n} \subset \frh_\bZ^* = 
 P := \bZ\omega_{1}\oplus\cdots\oplus\bZ\omega_{n} \subset \frh_\bR^*.
\end{align}
The action of $W_0 \subset \GL(\frh_\bR^*)$ on $\frh_\bR^*$ 
preserves the weight lattice $P=\frh_\bZ^*$.
We denote this action by
$\lambda \mapsto w.\lambda$ for $w \in W_0$ and $\lambda \in P$.

\subsubsection{Affine root system of type $(C_n^\vee,C_n)$}\label{sss:r:CvC}

Let $t(P)$ be the group algebra of the weight lattice $P=\frh_\bZ^*$.
Denoting by $t(\lambda) \in t(P)$ the element associated to $\lambda \in P$, we have
$t(P)=\pr{t(\lambda) \mid \lambda \in \frh_\bZ^*}$ and 
$t(\lambda)t(\mu)=t(\lambda+\mu)$ $(\lambda,\mu \in \frh_\bZ^*)$. 
Let us consider the lattice extension 
$\wt{\frh}_\bZ^* := \frh_\bZ^*\oplus\bZ\delta$ of $\frh_\bZ^*$ and 
the coefficient extension $\wt{\frh}_\bR^* := \wt{\frh}_\bZ^*\otimes_\bZ\bR$.
We define the action of $t(P)$ on $\wt{\frh}_\bR^*$ by
\[
 t(\lambda).(\mu+m\delta) := \mu+(m-\br{\mu,\lambda})\delta, \quad 
 \mu+m\delta \in \wt{\frh}_\bR^* = \frh_\bR^* \oplus \bR\delta.
\]
The relation of $w \in W_0$ and $t(\lambda)\in t(P)$ in the group $\GL(\wt{\frh}_\bR^*)$
is then given by  $w t(\lambda)w^{-1}=t(w.\lambda)$.
The subgroup $W \subset \GL(\wt{\frh}_\bR^*)$ generated by $t(P)$ and $W_0$
is called \emph{the extended affine Weyl group}. 
That is, 
\begin{align}\label{eq:exafW}
 W := t(P) \rtimes W_0 \subset \GL(\wt{\frh}_\bR^*).
\end{align}

The action of the element $s:=t(\ep_1)s_{2\ep_1} \in W$ on $P=\frh_\bZ^*$ is given by
$s.\ep_1 = \delta-\ep_1$ and $s.\ep_i = \ep_i$ ($i=2,\dots,n$), 
which is the same as the reflection $s_0:=s_{\alpha_0}$
with respect to the hyperplane
$H_{\alpha_0}:=\pr{x\in\frh_\bR^* \mid \br{\alpha_0^\vee,x}=0}$ 
for the affine root $\alpha_0:=\delta-2\ep_1\in\wt{\frh}_\bZ^*$.
Here we set $\alpha_0^\vee := \hf c-\ep_1$,
where $c$ is the basis element in the one-dimensional extension
$\wt{\frh}_\bR:=\frh_\bR \oplus \bR c$ of $\frh_\bR:=\frh_\bZ \otimes_{\bZ} \bR$. 
We also set $\br{c,x}=1$ for all $x\in\frh_\bR^*$.

As an abstract group, $W$ is a Coxeter group with generators $s_0,s_1,\dots,s_n$ and relations
\begin{align*}
 s_i^{2} = 1 & \qquad (i=0,\ldots,n), \\
 s_i s_j = s_j s_i & \qquad (|i-j|>1), \\
 s_i s_{i+1} s_i = s_{i+1} s_i s_{i+1} & \qquad (i=1,\ldots,n-2), \\
 s_i s_{i+1} s_i s_{i+1} = s_{i+1} s_i s_{i+1} s_i & \qquad (i=0,n-1).
\end{align*}
We define the length $\ell(w)$ of an element $w \in W$ to be the length of 
the reduced expression of $w$ by the generators $s_0,\ldots,s_n$.
We also denote by $\bleq$ the corresponding Bruhat order. 
The reduced expressions of $t(\ep_i)$ ($i=1,\dots,n$) are given by
\begin{align}\label{eq:tep}
\begin{split}
 &t(\ep_1) = s_0 s_1 \cdots s_{n-1} s_n s_{n-1} \cdots s_2 s_1, \\
 &t(\ep_2) = s_1 s_0 s_1 \cdots s_n s_{n-1} \cdots s_2, \\
 &t(\ep_i) = s_{i-1} \cdots s_0 s_1 \cdots s_n s_{n-1} \cdots s_i, \\
 &t(\ep_n) = s_{n-1} \cdots s_1 s_0 s_1 \cdots s_n.
\end{split}
\end{align}

Now we define the affine root system $S$ of type $(C_n^\vee, C_n)$ 
in the sense of \cite[(1.3.18)]{M} and\cite{S} by
\begin{align}\label{eq:S}
 S:= \{\pm\ep_i+\frac{k}{2}\delta, \pm2\ep_i+k\delta \mid 
       k\in\bZ,\ i=1,\ldots,n\} \cup 
     \{\pm\ep_i\pm\ep_j+k\delta \mid k \in \bZ, 1\leq i<j\leq n\}
     \subset \wt{\frh}_\bR^*.
\end{align}
We also define the subset $S_+ \subset S$ of positive roots by
\begin{align}\label{eq:S+}
S_+:=\{\alpha+k\delta,\alpha^\vee+\frac{k}{2}\delta \mid 
         \alpha \in R_+,\alpha^\vee \in R_+^\vee,k\in\bN\}\cup
     \{\alpha+k\delta,\alpha^\vee+\frac{k}{2}\delta \mid 
         \alpha\in R_-,\alpha^\vee\in R_-^\vee,k\in\bN\}.
\end{align}
We then have $S=S_+ \sqcup S_-$ with $S_-:=-S_+$. 
We also set $\wt{R}:=R\cup R^\vee$.
Then any $\beta \in S$ can be uniquely written as 
$\beta=\alpha+k\delta\in S$ with $\alpha\in\wt{R}$ and $k \in \hf\bZ$.
We denote the corresponding projection $S \to \wt{R}$ by
\begin{align}\label{eq:StoR}
 \ol{\beta} := \alpha \quad 
 (\beta=\alpha+k\delta \in S, \ \alpha \in \wt{R}, \ k \in \hf\bZ).
\end{align}
We also denote 
\begin{align}\label{eq:wtR_pm}
 \wt{R}_+:=R_+\cup R_+^\vee, \quad \wt{R}_-:=-\wt{R}_+.
\end{align}

Hereafter the case of type $(C_n^\vee,C_n)$ is also called the rank $n$ case.

\subsubsection{Alcove walks}\label{sss:r:aw}

\emph{Alcove walks} are introduced by Ram \cite{R} as analogue of 
Littelmann paths for affine Hecke algebras. 
They are valuable combinatorial objects, and used in Ram-Yip type formula \cite{RY,OS}
for non-symmetric Macdonald-Koornwinder polynomials,
and in Yip's formula \cite{Y} for Littlewood-Richardson rules of Macdonald polynomials 
in the untwisted affine root systems.
In this part we introduce the notation of alcove walks 
which will be used throughout in the text.
Basically we follow the notations in \cite[\S 2.2]{Y}, but make slight modifications.

Let us regard an affine root $\beta=\alpha+k\delta\in S$
($\alpha \in \wt{R}$, $k\in\hf\bZ$) as a affine linear function on $\frh_\bR^*$ by 
\[
 \beta(v) = \br{\alpha,v}+k \quad (v \in \frh_\bR^*).
\]
An alcove is defined to be a connected component of 
the complement 
$ \frh_\bR^* \setminus \bigcup_{\alpha\in S}H_{\alpha}$ 
of the hyperplanes $H_\alpha:=\pr{x\in\frh_\bR^* \mid \alpha(x)=0}$.
The fundamental alcove $A$ is the alcove given by
\begin{equation}\label{eq:FA}
 A := \pr{x\in\frh_\bR^* \mid \alpha_i(x)>0 \ (i=0,\dots,n)}.
\end{equation} 
Its boundary consists of the hyperplanes 
$H_{\alpha_0}, H_{\alpha_1}, \ldots, H_{\alpha_n}$.
Note that the mapping 
\[
 W \ni w \longmapsto w A \in 
 \pi_0(\frh_\bR^* \setminus \tbcup_{\alpha \in S}H_{\alpha})
\]
is a bijection.
An alcove $w A$ is surrounded by $n+1$ hyperplanes, say $H_{\gamma_i}$ ($i=0,\ldots,n$).
We call the intersection $H_{\gamma_i} \cap \ol{w A}$ an \emph{edge} of the alcove $w A$,
where $\ol{w A}$ denotes the closure with respect to the Euclidean topology. 
Note that each hyperplane $H_{\gamma_i}$ separates $w A$ and another alcove $v A$,
which can be written as $v=w s_j$ for some $j=0,\ldots,n$.
Then the edge $H_{\gamma_i} \cap \ol{w A}$ is just the intersection $\ol{w A} \cap \ol{w s_j A}$,
and has two sides, which we call \emph{the $w A$-side} and \emph{the $w s_j A$-side}. 

Given an alcove $w A$, we give a sign $\pm$ to each of the two sides on an edge of $w A$.
Let $H_{\gamma_i}$ ($i=0,\ldots,n$) be the hyperplanes surrounding $w A$.
By renaming the indices $i$ if necessary,
we can assume that the hyperplane $H_{\gamma_i}$ separates $w A$ and $w s_i A$.
Then using the projection $\gamma_i \mapsto \ol{\gamma_i}$ in \eqref{eq:StoR} 
and the symbols $\wt{R}_{\pm}$ in \eqref{eq:wtR_pm}, 
we set the signs by the following rule.
\begin{itemize}[nosep]
\item If $\ol{\gamma_i} \in \wt{R}_+$, 
then the $w A$-side of $H_{\gamma_i} \cap \ol{w A}$ is assigned by $+$ 
and the $w s_i A$-side is by $-$.
\item If $\ol{\gamma_i} \in \wt{R}_-$, 
then $w A$-side is assigned by $-$ and the $w s_i A$-side is by $+$.
\end{itemize}
See Figure \ref{fig:as} for the assignment in the rank $2$ case.

\begin{figure}[htbp]
\centering
\includegraphics[bb= 0 0 176 173]{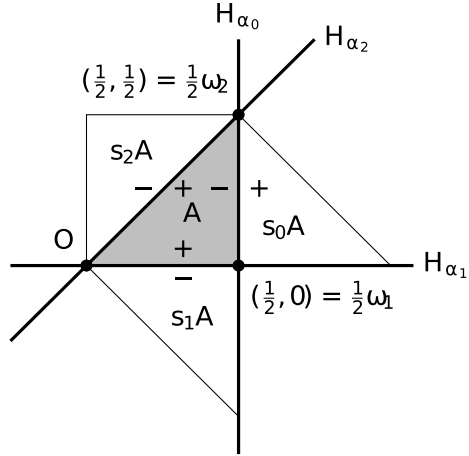}
\caption{Signs for the edges of the fundamental alcove $A$ in the rank $2$ case} \label{fig:as}
\end{figure}

Given an element $w \in W$ and a reduced expression $w=s_{i_1}\cdots s_{i_r}$, 
we define a subset $\cL(w) \subset S$ by 
\begin{align}\label{eq:cL(w)}
 \cL(w):=\pr{\alpha_{i_1}, \, s_{i_1}\alpha_{i_2},  \, \dots,  \, 
             s_{i_1} \cdots s_{i_{r-1}}\alpha_{i_r}}.
\end{align}
The set $\pr{H_{\beta} \mid \beta \in \cL(w)}$ consists of the
hyperplanes separating $A$ and $wA$.
Given elements $v,w \in W$ and their reduced expressions, we also set 
\begin{align}\label{eq:cL(v,w)}
 \cL(v,w):=(\cL(v)\cup\cL(w)) \setminus (\cL(v)\cap\cL(w)).
\end{align}
The set $\pr{H_\beta \mid \beta\in\cL(v,w)}$ consists of the
hyperplanes separating $v A$ and $w A$.
If $v \bleq w$, then we have
\begin{align}\label{eq:cL:trs}
 \cL(v,w) = v.\cL(e,v^{-1}w) = v.\cL(v^{-1}w).
\end{align}

Let us again given $w \in W$ and a reduced expression $w=s_{i_1}\cdots s_{i_r}$.
Then the mapping 
\begin{align*}
 \{0,1\}^r \ni (b_1,\dots,b_r) \longmapsto 
 s_{i_1}^{b_1} \cdots s_{i_r}^{b_r} \in \pr{v\in W \mid v\bleq w}
\end{align*}
is a bijection.
Let us given extra $z,w \in W$ such that $v \bleq w$.
We can write $v=s_{i_1}^{b_1}\cdots s_{i_r}^{b_r}$ with $b=(b_1,\dots,b_r) \in \{0,1\}^r$. 
We then consider the following sequence $p$ of alcoves.
\[
 p = \bigl(p_0:=z A,\ 
           p_1:=z s_{i_1}^{b_1}A,\ 
           p_2:=z s_{i_1}^{b_1}s_{i_2}^{b_2}A,
           \ \ldots, \ p_r:=z s_{i_1}^{b_1} \cdots s_{i_r}^{b_r}A\bigr).
\]
The sequence $p$ is called an \emph{alcove walk of type
$\oa{w}=(i_1,\ldots,i_r)$ beginning at $z A$},
and we denote by $\Gamma(\oa{w},z)$ the set of alcove walks of this kind.
The symbol $\oa{w}$ emphasizes that we choose a reduced expression $w=s_{i_1}\cdots s_{i_r}$.

\begin{eg}[Alcove walks in the rank $2$ case]\label{eg:ap}
For $w=s_1s_2 s_1 s_0$ and $z=e\in W$, the two alcove walks 
\[
 p_1 := (A, A, s_2A, s_2s_1A, s_2s_1s_0A), \ 
 p_2 := (A, s_1A, s_1s_2A, s_1s_2s_1A, s_1s_2s_1s_0A) \in \Gamma(\oa{w},z)
\]
are shown in Figure \ref{fig:ap}, where the gray region
is the fundamental alcove $A$, and the number $i=0,1,2$ 
on a hyperplane means that it belongs to the $W$-orbit of $H_{\alpha_i}$.

\begin{figure}[htbp]
\centering
\includegraphics[bb=0 0 324 153]{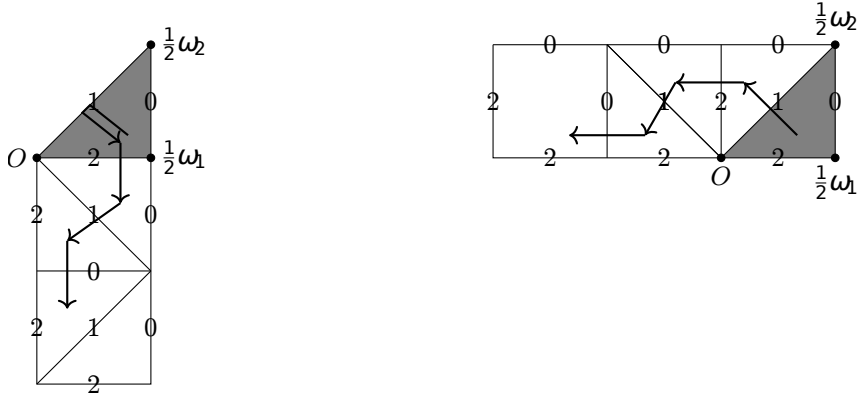}
\caption{Alcove walks $p_1$ and $p_2$}
\label{fig:ap}
\end{figure}
\end{eg}

For an alcove walk $p \in \Gamma(\oa{w},z)$ and $k=1,\ldots,r$, 
the transition $p_{k-1} \to p_k$ is called \emph{the $k$-th step of $p$}.
The $k$-th step is called a \emph{crossing} if $b_k=1$, 
and called a \emph{folding} if $b_k=0$.
The correspondence between the bit $b_k$ and the $k$-th step is shown in Table \ref{tab:bp},
where we denote by $v_{k-1} \in W$ the element such that $p_{k-1} = v_{k-1}A$.

\begin{table}[htbp]
\centering
\includegraphics[bb= 0 0 231 80]{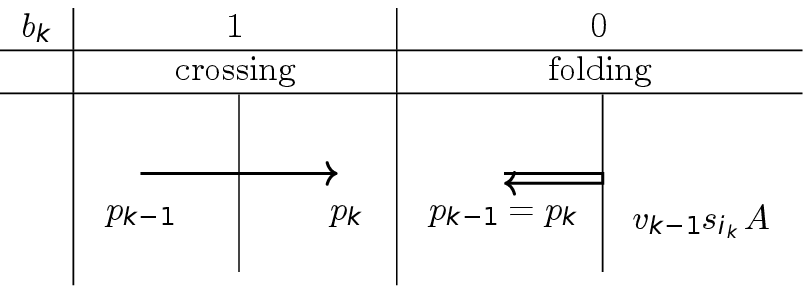}
\caption{Correspondence between bits and steps}\label{tab:bp}
\end{table}

Let us again given $z,w \in W$ with a reduced expression $w=s_{i_1}\cdots s_{i_r}$. 
For an alcove walk 
$p = (z A, \ldots, z s_{i_1}^{b_1} \cdots s_{i_r}^{b_r}A) \in \Gamma(\oa{w},z)$, 
we define $e(p) \in W$ by
\begin{align}\label{eq:e(p)}
 e(p) := z s_{i_1}^{b_1} \cdots s_{i_r}^{b_r}.
\end{align}
Thus $e(p)$ corresponds to the end of $p$.
We also define $h_k(p) \in S$ for $k=1,\ldots,r$ by the following rule.
Denote $v := s_{i_1}^{b_1} \cdots s_{i_{k-1}}^{b_{k-1}}$ for simplicity,
so that we have $p_{k-1}=v A$.
Then we define 
\begin{align}\label{eq:hk(p)}
h_k(p) :=  
\text{the affine root such that the corresponding hyperplane $H_{h_k(p)}$ 
      separates $v A$ and $v s_{i_k} A$}.
\end{align}
Furthermore, we call the $k$-th step of $p \in \Gamma(\oa{w},z)$ an \emph{ascent} if 
$z s_{i_1}^{b_1} \cdots s_{i_{k-1}}^{b_{k-1}} \bleq 
 z s_{i_1}^{b_1} \cdots s_{i_{k}}^{b_k}$, 
and call it a \emph{descent} if 
$z s_{i_1}^{b_1}\cdots s_{i_{k-1}}^{b_{k-1}} \bgeq 
 z s_{i_1}^{b_1} \cdots s_{i_{k}}^{b_k}$.
We denote the set of descent steps of $p$ by
\begin{align}\label{eq:des/asc}
 \des(p) := \pr{k=1,\ldots,r \mid \text{the $k$-th step is a descent}}.
\end{align}

Recalling the sign on an edge of an alcove (see Figure \ref{fig:as} for an example),
we can classify each step of an alcove walk $p$ into four types as in Table \ref{tab:sp},
where we used the symbol $v_{k-1} \in W$ such that $p_{k-1} = v_{k-1} A$.

Using this classification, we define $\varphi_{\pm}(p) \subset \{1,\ldots,r\}$ by
\begin{align}\label{eq:vp(p)}
\begin{split}
 \varphi_+(p) &:= \pr{k \mid \text{the $k$-th step of $p$ is a positive folding}},\\
 \varphi_-(p) &:= \pr{k \mid \text{the $k$-th step of $p$ is a negative folding}},
\end{split}
\end{align}
and define $\xi_{\des}(p) \subset \{1,\ldots,r\}$ by 
\begin{align}\label{eq:xp(p)}
  \xi_{\des}(p)&:=\pr{k\mid \text{the $k$-th step of $p$ is a crossing and $k\in\des(p)$}}.
\end{align}
Note that we fix a reduced expression $w=s_{i_1}\cdots s_{i_r}$
in the definitions of $\varphi_{\pm}(p)$ and $\xi_{\des}(p)$.

\begin{table}[htbp]
\centering
\includegraphics[bb= 0 0 414 68]{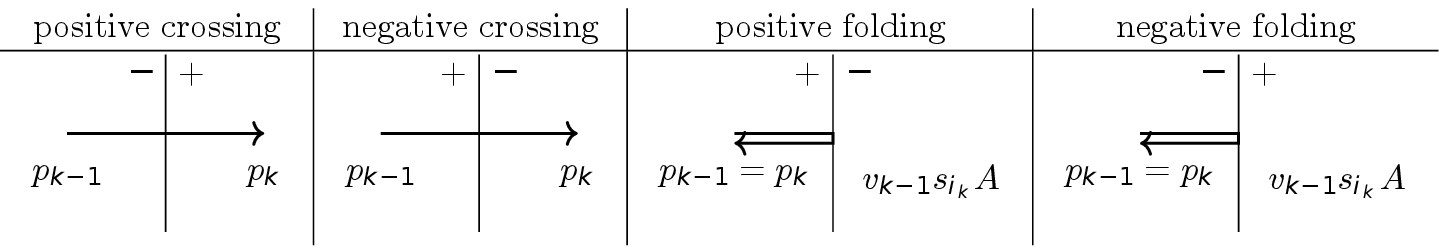}
\caption{Classification of steps in alcove walks}
\label{tab:sp}
\end{table}

\subsection{Affine Hecke algebras and Koornwinder polynomials}\label{ss:AH}

In this subsection, 
we explain the realization of non-symmetric Koornwinder polynomials 
via the polynomial representation of the 
affine Hecke algebra type $C_n$, 
and introduce Koornwinder polynomials by their symmetrization. 

\subsubsection{Affine Hecke algebras of type $(C_n^\vee,C_n)$ and polynomial representations}
\label{sss:AH:N}

Recall the affine root system $S$ of type $(C_n^\vee,C_n)$ 
and the extended affine Weyl group $W$ explained in \S \ref{sss:r:CvC}.
Let $\pr{t_\alpha \mid \alpha \in S}$ be parameters satisfying the condition
$t_\alpha = t_\beta$ for $\beta \in W.\alpha$.
Since the $W$-orbits in $S$ are given by
\begin{align*}
 W.\alpha_i=W.\alpha_i^\vee \ (i=1,\ldots,n-1),\ 
 W.\alpha_n,\ W.\alpha_n^\vee,\ W.\alpha_0,\ W.\alpha_0^\vee,
\end{align*}
we can replace the family $\{t_\alpha\}$ by
\begin{align}\label{eq:t_alpha}
 (t_{\alpha_0},t_{\alpha_i}=t_{\alpha_i^\vee},t_{\alpha_n},t_{\alpha_0^\vee},t_{\alpha_n^\vee})
=(t_0,t,t_n,u_0,u_n).
\end{align}
We will also denote $t_1,\dots,t_{n-1}:=t$.
Now we set the base field $\bK$ as
\begin{align}\label{eq:bK}
 \bK := \bQ(q^{\hf},t^{\hf},t_{0}^{\hf},t_{n}^{\hf},u_{0}^{\hf},u_{n}^{\hf}),
\end{align}
and all the linear spaces, their tensor products, 
and the algebras will be those over $\bK$ unless otherwise stated.

The affine Hecke algebra $H(W)$ is the associative algebra generated by
$T_0, T_1, \ldots, T_n$ subject to the following relations.
\begin{align}
 \nonumber
 (T_i-t_i^\hf)(T_i+t_i^{-\hf})=0 & \qquad(i=0,\ldots,n),\\
 \label{eq:br1}
 T_i T_j = T_j T_i & \qquad (|i-j|>1, (i,j) \not\in \{(n,0),(0,n)\}), \\
 T_i T_{i+1} T_i = T_{i+1} T_i T_{i+1} & \qquad (i=1,\ldots,n-2),\\ 
 \label{eq:br3}
 T_i T_{i+1} T_i T_{i+1} = T_{i+1} T_i T_{i+1} T_i & \qquad (i=0,n-1).
\end{align}
The relations \eqref{eq:br1}--\eqref{eq:br3} are called \emph{the braid relations}. 

Given an element $w \in W$ together with a reduced expression $w=s_{i_1}\cdots s_{i_r}$,
we consider the alcove walk
$(A, s_{i_1}A, \ldots, s_{i_1} \cdots s_{i_r}A=wA) \in \Gamma(\oa{w},e)$, 
and define $Y^w \in H(W)$ by
\begin{align}\label{Y:rel}
 Y^w := T_{i_1}^{\ep_1}\cdots T_{i_{r}}^{\ep_r},
\end{align}
where we set $\ep_k := 1$ if the $k$-th step of $p$ is a positive crossing, 
and set $\ep_k :=-1 $ if the $k$-th step is a negative crossing 
according to the classification in Figure \ref{tab:sp}.
The decomposition of $Y^w$ by $T_i$'s is independent of the choice of a reduced expression of $w$.
By the relations of $H(W)$, 
we find that the family $\{Y^w \mid w \in W\}$ is mutually commutative \cite[\S2]{N}. 

As explained in \cite[\S3]{M}, we can calculate $Y^{t(\ep_i)}$ 
using the reduced expression of $t(\ep_i)$ in \eqref{eq:tep}.
The result is 
\begin{align}\label{eq:Yred}
\begin{split}
 Y^{t(\ep_1)} &= T_{0}\cdots T_n T_{n-1}\cdots T_1, \\
 Y^{t(\ep_2)} &= T_{1}^{-1} T_0 \cdots T_{n-1} T_n T_{n-1}\cdots T_2, \\
 Y^{t(\ep_i)} &= T_{i-1}^{-1} \cdots T_1^{-1} T_0 \cdots T_{n-1} T_n T_{n-1}\cdots T_i, \\
 Y^{t(\ep_n)} &= T_{n-1}^{-1} \cdots T_1^{-1} T_0 T_1 \cdots T_n.
\end{split}
\end{align}

Now we denote by
\[
 \bK[Y^{\pm1}] = \bK[Y_1^{\pm1},\ldots,Y_n^{\pm1}] \subset H(W), \quad 
 Y_i := Y^{t(\ep_i)} \quad (i=1,\dots,n)
\]
the ring of Laurent polynomials in $Y_1,\dots,Y_n$. 
Then we have an isomorphism $H(W)\simeq H(W_0)\otimes \bK[Y^{\pm1}]$,
where $H(W_0)$ is the Hecke algebra of the finite Weyl group $W_0$.
The latter is the subalgebra of $H(W)$ generated by $T_1,\ldots,T_n$.

Next we review the basic representation of the affine Hecke algebra $H(W)$ introduced by Noumi \cite{N}. 
Let $\bK(x)=\bK(x_1,\ldots,x_n)$ be the field of rational functions with $n$ variables. 
Then the mapping 
\begin{align}\label{eq:NRep}
\begin{split}
 T_i& \longmapsto t_i^{\hf}+t_i^{-\hf}
 \frac{1-t_ix_i/x_{i+1}}{1-x_i/x_{i+1}}(s_i-1) \quad (i=1,\ldots,n-1),\\
 T_{0}& \longmapsto t_{0}^{\hf}+t_{0}^{-\hf}
 \frac{(1-u_0^{\hf}t_0^{\hf} q^{\hf}x_1^{-1})(1+u_0^{-\hf}t_0^{\hf}q^{\hf}x_1^{-1})}
      {1-q x_1^{-2}}(s_0-1), \\
 T_n& \longmapsto t_n^{\hf}+t_n^{-\hf}
 \frac{(1-u_n^{\hf}t_n^{\hf}x_n)(1+u_n^{-\hf}t_n^{\hf}x_n)}{1-x_n^2}(s_n-1)
\end{split}
\end{align}
defines a ring homomorphism $\rho: H(W) \to \End(\bK(x))$.
Moreover its image is contained in the endomorphism algebra 
$\End_{\bK}(\bK[x^{\pm1}]) \subset \End_{\bK}(\bK(x))$ of the Laurent polynomials. 
We call $\rho$ \emph{the basic representation of $H(W)$}.
Hereafter we identify $H(W)$ and its image under $\rho$, 
and regard $H(W)$ as a subalgebra of $\End_{\bK}(\bK[x^{\pm1}])$.
The right hand sides of \eqref{eq:NRep} are $q$-difference operators 
called \emph{Dunkl operators of type $(C^\vee_n,C_n)$}.

Let us give a simplified description of \eqref{eq:NRep}.
Using
\[
 u_i := \begin{cases}1 & (i=1,\ldots,n-1) \\ u_0 & (i=0)\\ u_n & (i=n)\end{cases}, \quad
 x^{\alpha_i}:=
 \begin{cases}x_i/x_{i+1} &(i=1,\ldots,n-1) \\ q x_1^{-2} & (i=0)\\ x_n^2 & (i=n)\end{cases},
\]
we can rewrite $T_i$'s as 
\begin{align}\label{eq:NRcpt}
 T_i = t_i^{\hf}+t_i^{-\hf}
 \frac{(1-u_i^{\hf}t_i^{\hf}x^{\frac{\alpha_i}{2}})
      (1+u_i^{-\hf}t_i^{\hf}x^{\frac{\alpha_i}{2}})}
      {1-x^{\alpha_i}}(s_i-1),
\end{align}
where we identified the left and right hand sides in \eqref{eq:NRep} as claimed before.
Let us further define the rational functions
$c_i(z), d_i(z) \in \bK(z)$ by
\begin{align}\label{eq:cd}
 c_i(z):=t_i^{-\hf}
 \frac{(1-u_i^{\hf}t_i^{\hf}z^{\hf})
      (1+u_i^{-\hf}t_i^{\hf}z^\hf)}{1-z}, \quad 
 d_i(z):= t_i^{\hf}-c_i(z)
 = \frac{(t_i^\hf-t_i^{-\hf})+(u_i^\hf-u_i^{-\hf})z^\hf}{1-z}.
\end{align}
Then we can rewrite \eqref{eq:NRep} or \eqref{eq:NRcpt} as 
\begin{align}\label{eq:Ticd}
 T_i= t_i^{\hf}+c_i(x^{\alpha_i})(s_i-1)
 = t_i^{\hf}s_i+d_i(x^{\alpha_i})(1-s_i)
 = c_i(x^{\alpha_i})s_i+d_i(x^{\alpha_i}).
\end{align}

For later use, 
we calculate the action of the element $Y^\beta$ on $1$ in the basic representation
for an affine root $\beta = \alpha+k\delta \in S$ ($\alpha \in \wt{R}$, $k \in \bZ$).
Let us define
\begin{align}\label{eq:shhgt}
 q^{ \sh(\alpha+k\delta)}:=q^{-k}, \quad
 t^{\hgt(\alpha+k\delta)}:=\tprod_{\gamma \in R_+^s}t^{\hf\br{\gamma^\vee,\alpha}}
 \tprod_{\gamma \in R_+^{\ell}}(t_0t_n)^{\hf\br{\gamma^\vee,\alpha}}.
\end{align}
Here $R_+^s:=\pr{\ep_i\pm\ep_j \mid 1 \le i<j \le n}$ denotes the set of positive short roots,
and $R_+^\ell:=\pr{2\ep_i \mid 1 \le i \le n}$ denotes the set of of positive long roots.
Then we can check 
\begin{align}\label{eq:Ylam1}
 Y^\beta 1 = q^{\sh(\beta)}t^{\hgt(\beta)}.
\end{align}
See also \cite[Proposition 4.5]{S} for a more general formula.

Finally we recall \emph{the Lusztig relations} in the basic representations of 
affine Hecke algebra.
For each weight $\lambda=(\lambda_1,\dots,\lambda_n)\in P=\frh_\bZ^*$, 
we define $x^\lambda \in \bK[x^{\pm1}]$ by
\begin{align}\label{eq:xlam}
 x^\lambda:=x_1^{\lambda_1}\cdots x_n^{\lambda_n} \in \bK[x^{\pm1}].
\end{align}

\begin{fct}[{Lusztig relations, \cite[Proposition 3.6]{L}}]\label{fct:Lus}
For $i=0,\dots,n$ and $\lambda \in \frh_\bZ^*$, we have
\begin{align*}
 T_i x^{\lambda}-x^{s_i.\lambda}T_i=
 d_i(x^{\alpha_i})(x^\lambda-x^{s_i.\lambda}),
\end{align*}
where the rational function $d_i(z)$ is defined by \eqref{eq:cd}.
\end{fct}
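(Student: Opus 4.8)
The plan is to verify the identity directly from the explicit formula for $T_i$ in the basic representation, exploiting the third form $T_i = c_i(x^{\alpha_i})s_i + d_i(x^{\alpha_i})$ in \eqref{eq:Ticd}. First I would compute $T_i x^\lambda$ as an operator applied to a Laurent polynomial: since multiplication by $x^\lambda$ followed by the operator $T_i$ and then the reverse composition are both honest operators on $\bK[x^{\pm1}]$, it suffices to test the claimed operator identity on an arbitrary monomial $x^\nu$, or equivalently — since both sides are $\bK[x^{\pm1}]$-linear in the obvious sense once we sort out how $s_i$ interacts with multiplication — to expand everything using $s_i(x^\nu) = x^{s_i.\nu}$ and $s_i \circ (x^\lambda \cdot) = (x^{s_i.\lambda}\cdot) \circ s_i$.

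The key computational step is the following. Write $T_i = c_i(x^{\alpha_i})s_i + d_i(x^{\alpha_i})$, where here $c_i, d_i$ are multiplication operators by the indicated rational functions of $x^{\alpha_i}$, which are $s_i$-anti-invariant in the sense that $s_i$ sends $x^{\alpha_i}$ to $x^{-\alpha_i}$. Then
\begin{align*}
 T_i \circ (x^\lambda \cdot)
 &= c_i(x^{\alpha_i})\, s_i \circ (x^\lambda \cdot) + d_i(x^{\alpha_i})\, x^\lambda \\
 &= c_i(x^{\alpha_i})\, (x^{s_i.\lambda}\cdot) \circ s_i + d_i(x^{\alpha_i})\, x^\lambda,
\end{align*}
while
\begin{align*}
 (x^{s_i.\lambda}\cdot) \circ T_i
 &= x^{s_i.\lambda}\, c_i(x^{\alpha_i})\, s_i + x^{s_i.\lambda}\, d_i(x^{\alpha_i}).
\end{align*}
Subtracting, the terms containing $s_i$ both carry the coefficient $x^{s_i.\lambda} c_i(x^{\alpha_i})$ and cancel exactly, leaving $T_i x^\lambda - x^{s_i.\lambda} T_i = d_i(x^{\alpha_i})\,(x^\lambda - x^{s_i.\lambda})$ as multiplication operators, which is the assertion. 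I would record that the cancellation of the $s_i$-terms is the whole content: it is precisely the statement that the ``difference'' part of $T_i$ is the multiplication operator $d_i(x^{\alpha_i})$ composed with $(1 - s_i)$, as displayed in the middle expression of \eqref{eq:Ticd}.

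The only point requiring a little care — and the place I expect the minor obstacle to sit — is the bookkeeping of the cases $i = 0$ and $i = n$, where $x^{\alpha_i}$ is $qx_1^{-2}$ and $x_n^2$ respectively rather than a ratio $x_i/x_{i+1}$: one must check that $s_0$ (resp. $s_n$) still acts on $x^{\alpha_i}$ by inversion, taking into account the $\delta$-shift built into $s_0$, so that $c_i(x^{\alpha_i})$ and $d_i(x^{\alpha_i})$ are genuinely $s_i$-anti-invariant up to the swap $x^{\alpha_i} \leftrightarrow x^{-\alpha_i}$. This is immediate from the definition of the basic representation \eqref{eq:NRep}–\eqref{eq:NRcpt} and the action of $s_0$ on $P$ recorded in \S\ref{sss:r:CvC}, and once it is noted the uniform computation above applies verbatim for all $i = 0, \dots, n$. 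Alternatively, since the statement is purely about operators on $\bK[x^{\pm1}]$ and the relation $T_i x^\lambda - x^{s_i.\lambda}T_i = d_i(x^{\alpha_i})(x^\lambda - x^{s_i.\lambda})$ is already classical in Lusztig's general setup \cite[Proposition 3.6]{L}, one may simply invoke that reference after matching the normalization of $T_i$ with \eqref{eq:Ticd}; I would present the short direct verification as it is self-contained and fixes conventions for the sequel.
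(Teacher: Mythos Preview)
Your direct verification is correct. The paper does not supply its own proof of Fact~\ref{fct:Lus}: it is stated as a cited fact from \cite[Proposition~3.6]{L}, so there is nothing to compare against beyond noting that your short computation from the expression $T_i = c_i(x^{\alpha_i})s_i + d_i(x^{\alpha_i})$ in \eqref{eq:Ticd} is exactly the standard one-line derivation and is self-contained.

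One remark: your caveat about the cases $i=0$ and $i=n$ is more caution than is needed. The cancellation of the $s_i$-terms uses only the operator identity $s_i \circ (x^\lambda\cdot) = (x^{s_i.\lambda}\cdot)\circ s_i$ together with the fact that multiplication operators commute; the behaviour of $c_i(x^{\alpha_i})$ or $d_i(x^{\alpha_i})$ under $s_i$ never enters. So the argument is already uniform in $i=0,\dots,n$ without any case analysis, and you can drop that paragraph.
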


\subsubsection{Double affine Hecke algebras and non-symmetric Koornwinder polynomials}
\label{sss:AH:DH}

Next we review \emph{the double affine Hecke algebra $DH(W)$ of type $(C_n^\vee,C_n)$}
and \emph{the non-symmetric Koornwinder polynomials $E_\lambda(x)$}, 
following \cite{M}, \cite{Sa} and \cite{S}.

As in the previous \S \ref{sss:AH:N}, we regard $H(W)$ as a $\bK$-subalgebra 
of $\End_{\bK}(\bK[x^{\pm1}])$ by the basic representation \eqref{eq:NRep}.
We define \emph{the double affine Hecke algebra} $DH(W) \subset \End_{\bK}(\bK[x^{\pm1}])$
as the $\bK$-subalgebra generated by $\bK[x^{\pm1}]$, $H(W_{0})$ and $\bK[Y^{\pm1}]$.
Thus 
\[
 DH(W) := \bigl< \bK[x^{\pm1}], \, H(W_{0}), \, \bK[Y^{\pm1}] \bigr>
 \subset \End_{\bK}(\bK[x^{\pm1}]).
\] 
As in the case of untwisted affine root systems,
the algebra $DH(W)$ has \emph{the Cherednik anti-involution} $\phi$ \cite[\S 3]{Sa}:  
\begin{align}\label{eq:inv}
\begin{split}
 &\phi(x_i)=Y_i^{-1}, \quad
  \phi(Y_i)=x_i^{-1}, \quad
  \phi(T_i)=T_i\quad(i=1,\dots,n),\\
 &\phi(u_{n})=t_{0},\quad\phi(t_{0})=u_{n}.
 \end{split}
\end{align}
On the element $T_0$ the anti-involution acts as $\phi(T_0)=T_{s_{2\ep_1}}^{-1}x_1^{-1}$.
In fact, we have $T_0=Y_1T_{s_{2\ep_1}}^{-1}$ and 
$T_{s_{2\ep_1}}=T_1\cdots T_{n}T_{n-1}\cdots T_{1}$ by \eqref{eq:Yred}.
Hereafter we denote 
\begin{align}\label{eq:Tiv}
 T_i^\vee := \phi(T_i) \quad (i=0,\dots,n).
\end{align}

Next we introduce \emph{the $x$- and $Y$-intertwiners} for $DH(W)$ following \cite[\S 5.6]{M}.  
Let $\wt{DH}(W)$ be the coefficient extension of $DH(W)$ by rational functions of $x$'s and $Y$'s.
In other words, we set 
\begin{align}\label{eq:wDH}
 \wt{DH}(W) := \bigl< \bK(x), \, H(W_0), \, \bK(Y) \bigr> 
 \subset \End_\bK(\bK(x)).
\end{align}
Here $\bK(x)$ and $\bK(Y)$ are the fields of rational functions of $x_i$ and $Y_i$ 
$(i=1,\dots,n)$ respectively. 
For $i = 0,\ldots,n$, we define $S_i^{x} \in \wt{DH}(W)$ by
\begin{align}\label{eq:Sxi}
 S^x_i := T_i+\varphi_i^{+}(x^{\alpha_i})
        = T_i^{-1}+\varphi^{-}_i(x^{\alpha_i}),
\end{align}
where 
\begin{align}\label{eq:varphi}
 \varphi_i^{\pm}(z) :=
 \mp \frac{(t_i^\hf-t_i^{-\hf})+z^{\pm\hf}(u_i^\hf-u_i^{-\hf})}{1-z^{\pm1}}
 \in \bK(z).
\end{align}
We call $S_i^x$ \emph{the $x$-intertwiners}.

Let us explain some basic properties of $x$-intertwiners.
Recalling the rational function $d_i(z)$ in \eqref{eq:cd} 
and the expression of $T_i$ in \eqref{eq:Ticd}, we have 
\begin{align}\label{eq:Sics}
 \varphi_i^+(z) = d_i(z), \quad 
 S_i^x= T_i - d_i(x^{\alpha_i}) = c_i(x^{\alpha_i}) s_i.
\end{align}
For each weight $\lambda \in \frh_\bZ^*$, we have 
\begin{align}\label{eq:Sxl}
 S_i^x x^{\lambda} = x^{s_i(\lambda)} S_i^x
\end{align}
by the Lusztig relations (Fact \ref{fct:Lus}).
Moreover, by \cite[(5.5.2)]{M}, the $x$-intertwiners $S_i^x$ ($i=0,\ldots,n$) 
satisfy the same braid relations as \eqref{eq:br1}--\eqref{eq:br3}: 
\begin{align}\label{eq:brSx}
\begin{split}
 S^x_i S^x_j = S^x_j S^x_i & \quad (|i-j|>1),\\
 S^x_i S^x_{i+1} S^x_i = S^x_{i+1} S^x_i S^x_{i+1} & \quad  (i=1,\dots,n-2),\\
 S^x_i S^x_{i+1} S^x_i S^x_{i+1} = S^x_{i+1} S^x_i S^x_{i+1} S^x_i & \quad  (i=0,n-1).
\end{split}
\end{align}
Given an element $w \in W$, choose 
a reduced expression $w=s_{i_1} \cdots s_{i_p}$, and set 
\begin{align}\label{eq:Sxw}
 S_w^x := S_{i_1}^x \cdots S_{i_p}^x \in \wt{DH}(W). 
\end{align}
By the braid relations, 
$S_w^x$ is independent of the choice of a reduced expression of $w$.

Next we introduce \emph{$Y$-intertwiners}.
First, note that the anti-involution $\phi$ can  be extended to $\wt{DH}(W)$.
In fact, $\wt{DH}(W)$ is the Ore localization of the non-commutative algebra $DH(W)$ 
by the commutative subalgebras $\bK[x^{\pm1}]$ and $\bK[Y^{\pm1}]$,
and $\phi$ is an isomorphism on these commutative subalgebras.
We denote the extension of $\phi$ to $\wt{DH}(W)$ by same symbol $\phi$.
Now we define the $Y$-intertwiners $S_i^Y \in \wt{DH}(W)$ by
\begin{align}\label{eq:SYi}
\begin{split}
 S_i^{Y} &:= \phi(S_i^x) = T_i+\psi_i^{+}(Y^{-\alpha_i})
             = T_i^{-1}+\psi_i^{-}(Y^{-\alpha_i}) 
 \quad (i=1,\ldots,n),\\
 S_{0}^{Y} &:= \phi(S_0^x) = T_{0}^\vee+\psi_{0}^{+}(qY_{1}^{2})
             = (T_{0}^{\vee})^{-1}+\psi^-_{0}(qY_{1}^{2}),
\end{split}
\end{align}
where the symbols $\psi_i^{\pm}(z)$ denote the images of $\varphi_i^{\pm}(z)$ 
given in \eqref{eq:varphi} under the extended anti-involution $\phi$. 
That is, we have 
\begin{align}\label{eq:psi(z)}
\begin{split}
 \psi_i^{\pm}(z)
 &:=\varphi_i^{\pm1}(z)
 =\mp\frac{t^{\hf}-t^{-\hf}}{1-z^{\pm1}}
 \quad(i=1,\ldots,n-1),\\
 \psi_{0}^{\pm}(z)
 &:=\mp\frac{(u_n^\hf-u_n^{-\hf})+z^{\pm\hf}(u_0^\hf-u_0^{-\hf})}{1-z^{\pm1}},\\
 \psi_{n}^{\pm}(z)
 &:=\mp\frac{(t_n^\hf-t_n^{-\hf})+z^{\pm\hf}(t_0^\hf-t_0^{-\hf})}{1-z^{\pm1}}.
\end{split}
\end{align}

We can deduce properties of $S_i^Y$'s from those of $S_i^x$'s.
For example, applying the anti-involution $\phi$ to the relation \eqref{eq:Sxl}, we have
\begin{align}\label{eq:SYl}
 S_i^Y Y^\lambda = Y^{s_i\lambda}S_i^Y
\end{align}
for each $i=0,\ldots,n$ and $\lambda \in \frh_\bZ^*$.
We can also see that $S_i^Y$'s satisfy the same braid relations as \eqref{eq:brSx}.
For an element $w \in W$, we can define $S_w^Y \in \wt{DH}(W)$ 
by choosing a reduced expression $w=s_{i_1} \cdots s_{i_p}$ and 
\begin{align}\label{eq:SYw}
 S_w^Y := S_{i_1}^Y \cdots S_{i_p}^Y \in \wt{DH}(W).
\end{align}
It is well-defined by the braid relations of $S_i^Y$'s.
 
Finally we explain the non-symmetric Koornwinder polynomials.
For each weight $\mu \in P = \frh_\bZ^*$, we regard $t(\mu) W_0 \subset W$ 
by the decomposition $W = t(P) \rtimes W_0$ in \eqref{eq:exafW}.
Then we define $w(\mu) \in W$ by the following description:
\begin{align}\label{eq:w(mu)}
 \text{$w(\mu)$ is the shortest element among $t(\mu) W_0 \subset W$.}
\end{align}
Now we have:

\begin{fct}[{\cite[\S 6]{Sa}, \cite[Theorem 4.8]{S}}]\label{fct:Emu}
For $\mu \in \frh_\bZ^*$, the element
\[
 E_\mu(x) := S_{w(\mu)}^Y 1
\]
belongs to $\bK[x^{\pm1}]$.
We call it 
\emph{the non-symmetric Koornwinder polynomial associated to $\mu$}.
\end{fct}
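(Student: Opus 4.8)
The plan is to prove $S^{Y}_{w(\mu)}1\in\bK[x^{\pm1}]$ by induction on $\ell:=\ell(w(\mu))$. A priori $S^{Y}_{w(\mu)}$ is only an element of $\wt{DH}(W)$ sending $\bK(x)$ to $\bK(x)$, since the factors $\psi^{\pm}_i$ in \eqref{eq:SYi} involve rational functions of the $Y$'s; the point is that, applied to $1$, the output is an honest Laurent polynomial. The mechanism is that $1$ is a simultaneous $Y$-eigenvector, so each intertwiner acts on it as an element of $H(W)$ plus a scalar. Fix any reduced expression $w(\mu)=s_{i_1}\cdots s_{i_\ell}$ (the resulting $S^{Y}_{w(\mu)}$ of \eqref{eq:SYw} does not depend on it, by the braid relations of the $S^{Y}_i$), set $f_0:=1$ and $f_k:=S^{Y}_{i_{\ell-k+1}}f_{k-1}$ for $1\le k\le\ell$, so that $f_\ell=S^{Y}_{w(\mu)}1$. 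I claim each $f_k$ lies in $\bK[x^{\pm1}]$ and, when nonzero, is a $Y$-eigenvector $Y^{\beta}f_k=\gamma_k(\beta)f_k$; for $f_0$ this is \eqref{eq:Ylam1}, with $\gamma_0(\beta)=q^{\sh(\beta)}t^{\hgt(\beta)}$.

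For the step, assume $f_{k-1}\in\bK[x^{\pm1}]$ is a $Y$-eigenvector with character $\gamma_{k-1}$ (if $f_{k-1}=0$, then $f_\ell=0\in\bK[x^{\pm1}]$ and we are done). Put $i:=i_{\ell-k+1}$ and use $S^{Y}_i=T_i+\psi^{+}_i(Y^{-\alpha_i})$ from \eqref{eq:SYi} for $1\le i\le n$, or $S^{Y}_0=T^{\vee}_0+\psi^{+}_0(qY_1^2)$ with $qY_1^2=Y^{-\alpha_0}$. Since $Y^{-\alpha_i}$ acts on $f_{k-1}$ by the scalar $\gamma_{k-1}(-\alpha_i)\in\bK$,
\[
 f_k=S^{Y}_i f_{k-1}=T_i f_{k-1}+\psi^{+}_i\!\bigl(\gamma_{k-1}(-\alpha_i)\bigr)\,f_{k-1}\qquad(1\le i\le n),
\]
and identically for $i=0$ with $T_i$ replaced by $T^{\vee}_0=T^{-1}_{s_{2\ep_1}}x_1^{-1}$. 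The coefficient $\psi^{+}_i(\gamma_{k-1}(-\alpha_i))$ is a well-defined element of $\bK$ provided $\gamma_{k-1}(-\alpha_i)\neq1$, the only pole of $\psi^{+}_i$ in \eqref{eq:psi(z)} being at argument $1$. Moreover $T_i$, and also $T^{\vee}_0$ (an $H(W_0)$-element composed with multiplication by a monomial), preserves $\bK[x^{\pm1}]$ by \eqref{eq:NRep}. Hence $f_k\in\bK[x^{\pm1}]$, and \eqref{eq:SYl} shows that, when nonzero, $f_k$ is a $Y$-eigenvector with $\gamma_k(\lambda)=\gamma_{k-1}(s_i\lambda)$.

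The remaining point, which I expect to be the main obstacle, is the non-degeneracy $\gamma_{k-1}(-\alpha_{i_{\ell-k+1}})\neq1$ used above. Iterating $\gamma_k(\lambda)=\gamma_{k-1}(s_i\lambda)$ gives $\gamma_{k-1}(-\alpha_{i_{\ell-k+1}})=\gamma_0(\beta)^{-1}$ with $\beta=s_{i_\ell}\cdots s_{i_{\ell-k+2}}\alpha_{i_{\ell-k+1}}\in\cL(w(\mu)^{-1})$ for the reduced word $w(\mu)^{-1}=s_{i_\ell}\cdots s_{i_1}$ (see \eqref{eq:cL(w)}); by the standard property of the sets $\cL(w)$ we have $\beta\in S_+$. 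Thus it suffices to prove $q^{\sh(\beta)}t^{\hgt(\beta)}\neq1$ for every $\beta\in S$. If $\beta$ has nonzero $\delta$-part, then $q^{\sh(\beta)}$ is a nontrivial power of $q^{1/2}$ which cannot cancel a monomial in $t,t_0,t_n$, because $q$ is algebraically independent from $t,t_0,t_n$ over $\bQ$ inside $\bK$. If $\beta=\alpha\in\wt{R}$ is a finite root or coroot, one computes from \eqref{eq:shhgt} that $t^{\hgt(\alpha)}$ is a nontrivial Laurent monomial in $t,t_0,t_n$: for $i<j$ one finds $t^{\hgt(\ep_i-\ep_j)}=t^{j-i}$, $t^{\hgt(\ep_i+\ep_j)}=t^{2n-i-j}t_0t_n$, and $t^{\hgt(2\ep_i)}=t^{2(n-i)}t_0t_n$, $t^{\hgt(\ep_i)}=t^{n-i}(t_0t_n)^{1/2}$, the remaining cases being reciprocals. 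This completes the induction; the base case $\ell=0$ forces $w(\mu)=e$, hence $\mu=0$ and $E_0=1$. Finally, that every $f_k$ is nonzero --- so that $E_\mu$ is indeed the non-symmetric Koornwinder polynomial of \cite{Sa,S}, not merely some Laurent polynomial --- follows from the invertibility of each $S^{Y}_i$ on eigenvectors with generic eigencharacter, using the same estimate.
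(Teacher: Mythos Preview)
The paper does not supply its own proof of this statement: it is recorded as a Fact with a citation to \cite[\S 6]{Sa} and \cite[Theorem 4.8]{S}. Your argument is essentially the standard intertwiner construction that underlies those references, and it is correct as written. The inductive mechanism---that $1$ is a joint $Y$-eigenvector by \eqref{eq:Ylam1}, that each $S^{Y}_i$ then acts on a $Y$-eigenvector as $T_i^\vee$ plus a scalar, and that $T_i^\vee$ preserves $\bK[x^{\pm1}]$---is exactly right, as is your identification of the relevant eigenvalue with $\gamma_0(\beta)^{-1}$ for $\beta\in\cL(w(\mu)^{-1})\subset S_+$.

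Two small remarks. First, your non-degeneracy argument implicitly uses that every $\beta\in\cL(w(\mu)^{-1})$ lies in a $W$-orbit $W.\alpha_j$ of a simple affine root and hence has integer $\delta$-coefficient; this is true because the $\cL$-sets are built from the $\alpha_j$, but it is worth saying, since $S$ also contains the orbits $W.\alpha_0^\vee$ and $W.\alpha_n^\vee$ with half-integer shifts. Second, your closing sentence on nonvanishing is a little compressed: the clean way to finish is to observe $(S^{Y}_i)^2=n_i(Y^{-\alpha_i})$ (the case $w=s_i$ of Proposition \ref{prp:SYiw}), so $S^{Y}_i$ is injective on an eigenvector $f_{k-1}$ whenever $n_i(\gamma_{k-1}(-\alpha_i))\neq 0$, and the same algebraic-independence argument you used for $\psi^{+}_i$ rules out the finitely many zeros of $n_i$. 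This is not needed for the bare assertion $E_\mu\in\bK[x^{\pm1}]$, but it justifies calling $E_\mu$ \emph{the} non-symmetric Koornwinder polynomial rather than possibly zero.
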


By \eqref{eq:SYl}, $E_\mu(x)$ is a simultaneous eigenfunction 
of the family $\pr{Y^\lambda \mid \lambda \in \frh_\bZ^*}$ of Dunkl operators.
Note that our normalization of $E_\mu(x)$ is different from that in \cite{Sa,S}.
In loc.\ cit., the coefficient of $x^\mu$ is normalized to be $1$.

\subsubsection{Koornwinder polynomials}\label{sss:AH:K}

Now we introduce \emph{Koornwinder polynomials} by 
symmetrizing non-symmetric Koornwinder polynomials.

First, we define the set $(\frh_\bZ^*)_+ \subset \frh_\bZ^*$ of dominant weights by
\[
 (\frh_\bZ^*)_+:=
 \pr{\mu\in \frh_\bZ^* \mid \br{\alpha_i^\vee,\mu}\geq0,\ i=1,\dots,n}.
\]
For a dominant weight $\mu\in(\frh_\bZ^*)_+$, 
we denote the stabilizer of $\mu$ in the finite Weyl group $W_0$ by
\begin{align}\label{eq:W_mu}
 W_\mu := \{w \in W_0 \mid w.\mu=\mu\} \subset W_0,
\end{align}
and denote the longest element among $W_\mu$ by
\begin{align}\label{eq:wmu}
 w_\mu \in W_\mu.
\end{align}

Next, using the notations in \S \ref{sss:r:CvC} and \S \ref{sss:AH:N}, 
we define $t_w \in \bK$ for each $w \in W$ by 
\begin{align}\label{eq:t_w}
 t_w := \prod_{\beta\in\cL(w)}t_\beta \in \bK.
\end{align}
Here $\{t_\alpha \mid \alpha \in S\}$ is the $W$-invariant family of 
parameters \eqref{eq:t_alpha}, $\bK$ is the base field \eqref{eq:bK},
and $\cL(w) \subset S$ is given by \eqref{eq:cL(w)}.
If $w=s_{i_1}\cdots s_{i_r}\in W$ is the shortest element, then we have
$t_w=t_{i_1}\cdots t_{i_r}$.
For a dominant weight $\mu\in(\frh_\bZ^*)_+$, we define \emph{the Poincar\'e polynomial}
$W_\mu(t) \in \bK$ of the stabilizer $W_\mu$ by 
\begin{align}\label{eq:Wmu(t)}
 W_\mu(t):=\sum_{u \in W_\mu}t_u.
\end{align}

\begin{lem}
For each element $\mu\in(\frh_\bZ^*)_+$, we have
\[
 \sum_{u\in W_\mu}
 \biggl(\prod_{\alpha\in \cL(1,u)} t_\alpha^\hf
        \frac{1-t^{\hgt(-\alpha)}t_\alpha^{-1}}{1-t^{\hgt(-\alpha)}}\biggr)
 \biggl(\prod_{\alpha\in \cL(u,w_\mu)} t_\alpha^{-\hf}
        \frac{1-t^{\hgt(-\alpha)}t_\alpha}{1-t^{\hgt(-\alpha)}}\biggr)
 =t_{w_\mu}^{-\hf}W_\mu(t).
\]
\end{lem}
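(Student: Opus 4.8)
The plan is to split off the overall factor $t_{w_\mu}^{-\hf}$ and identify what remains with a classical symmetrization identity for the finite root subsystem attached to $W_\mu$.

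First I would simplify the two products. Since the unit $e$ has empty reduced expression, $\cL(e)=\emptyset$, so $\cL(1,u)=\cL(u)$ for every $u$ by \eqref{eq:cL(v,w)}. Write $R_{\mu,+}$ for the set of positive roots of the root subsystem $R_\mu\subset R$ generated by the $\alpha_i$ with $s_i\in W_\mu$. The longest element $w_\mu$ of $W_\mu$ (see \eqref{eq:wmu}) is a product of reflections from $W_\mu$ and negates exactly $R_{\mu,+}$, so $\cL(w_\mu)=R_{\mu,+}$; in particular $\cL(u)\subseteq\cL(w_\mu)$ for all $u\in W_\mu$, and hence $\cL(u,w_\mu)=\cL(w_\mu)\setminus\cL(u)$ by \eqref{eq:cL(v,w)}. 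Substituting these, writing $t_\alpha^{\hf}=t_\alpha\,t_\alpha^{-\hf}$ in the first product, and using $t_{w_\mu}=\prod_{\alpha\in\cL(w_\mu)}t_\alpha$ from \eqref{eq:t_w}, the common factor $\prod_{\alpha\in\cL(w_\mu)}t_\alpha^{-\hf}=t_{w_\mu}^{-\hf}$ pulls out of every summand, and the Lemma becomes equivalent to
\[
 \sum_{u\in W_\mu}\ \Bigl(\prod_{\alpha\in\cL(u)}\frac{t_\alpha-\chi_\alpha}{1-\chi_\alpha}\Bigr)
 \Bigl(\prod_{\alpha\in\cL(w_\mu)\setminus\cL(u)}\frac{1-t_\alpha\chi_\alpha}{1-\chi_\alpha}\Bigr)=W_\mu(t),\qquad \chi_\alpha:=t^{\hgt(-\alpha)},
\]
where $W_\mu(t)=\sum_{u\in W_\mu}t_u$ by \eqref{eq:Wmu(t)}.

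Next I would note that $\lambda\mapsto t^{\hgt(\lambda)}$ is a group homomorphism $\frh_\bZ^*\to\bK^\times$, clear from the explicit formula \eqref{eq:shhgt} (a product of powers of the parameters with exponents linear in $\lambda$), or from $Y^\lambda\cdot 1=t^{\hgt(\lambda)}$ in \eqref{eq:Ylam1} together with the commutativity of the $Y^w$. Hence $\chi_\alpha=t^{\hgt(-\alpha)}$ is the reciprocal of $t^{\hgt(\alpha)}$, and there is a point $x_0=((x_0)_1,\dots,(x_0)_n)$ with coordinates in $\bK$ such that $x_0^\lambda=t^{\hgt(-\lambda)}$ for all $\lambda\in\frh_\bZ^*$; so $x_0^\alpha=\chi_\alpha$ for $\alpha\in R_{\mu,+}$. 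Now I would invoke the classical Macdonald symmetrization identity for the crystallographic root system $R_\mu$ with the $W_\mu$-invariant parameters $(t_\alpha)$ (equal to $t$ on short roots, to $t_n$ on long roots when $s_n\in W_\mu$); see \cite{M}:
\[
 \sum_{u\in W_\mu}u\Bigl(\prod_{\alpha\in R_{\mu,+}}\frac{1-t_\alpha x^{\alpha}}{1-x^{\alpha}}\Bigr)=\sum_{u\in W_\mu}t_u .
\]
Evaluating at $x=x_0$ and expanding $u\bigl(\prod_{\alpha}\tfrac{1-t_\alpha x^{\alpha}}{1-x^{\alpha}}\bigr)=\prod_{\alpha\in R_{\mu,+}}\tfrac{1-t_\alpha x^{u\alpha}}{1-x^{u\alpha}}$ according to the sign of $u\alpha$: the factors with $u\alpha\in R_-$ equal $\tfrac{t_\alpha-x^{-u\alpha}}{1-x^{-u\alpha}}$, and the substitution $\beta=-u\alpha$ (under which $t_\alpha=t_\beta$ and $\{-u\alpha\mid u\alpha\in R_-\}=-u\,\cL(u^{-1})=\cL(u)$) turns them into $\tfrac{t_\beta-\chi_\beta}{1-\chi_\beta}$ for $\beta\in\cL(u)$; the factors with $u\alpha\in R_+$ turn into $\tfrac{1-t_\gamma\chi_\gamma}{1-\chi_\gamma}$ for $\gamma=u\alpha$ running over $R_{\mu,+}\setminus\cL(u)$. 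Thus the left-hand side of the Macdonald identity specialized at $x_0$ is exactly the left-hand side of the reduced identity, and the Lemma follows.

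The step needing the most care is this last sign-splitting: verifying $-u\,\cL(u^{-1})=\cL(u)$, that the complementary factors are indexed by $R_{\mu,+}\setminus\cL(u)$, and that $t_{u\alpha}=t_\alpha$ is used throughout. One should also check that the parameters restricted to $R_\mu$ are genuinely $W_\mu$-invariant (so that the right-hand side $\sum_u t_u$ is indeed $W_\mu(t)$), and that specializing at $x_0$ is allowed — both sides of the Lemma are rational in the parameters, and the denominators $1-\chi_\alpha$ are nonzero for generic parameters, so it suffices to prove the identity there. None of these is deep; the one substantive ingredient is the Macdonald identity above.
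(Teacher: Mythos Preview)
Your argument is correct. The paper itself does not give a proof of this lemma but only refers the reader to \cite[Lemma 3.4]{Y}, so there is no in-paper proof to compare against in detail. Your route---factoring out $t_{w_\mu}^{-\hf}$, interpreting $t^{\hgt(-\alpha)}$ as the value $x_0^\alpha$ of a character, and then recognizing the remaining sum as the specialization at $x_0$ of Macdonald's classical symmetrization identity $\sum_{u\in W_\mu} u\bigl(\prod_{\alpha\in R_{\mu,+}}\frac{1-t_\alpha x^\alpha}{1-x^\alpha}\bigr)=W_\mu(t)$---is exactly the standard way this kind of Poincar\'e-polynomial identity is established, and is in the same spirit as what Yip does in the untwisted setting. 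The bookkeeping you flag (the equality $-u\,\cL(u^{-1})=\cL(u)$, the complementary indexing by $R_{\mu,+}\setminus\cL(u)$, $W_\mu$-invariance of the restricted parameters, and genericity of the specialization) is all routine and correctly identified.
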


For a proof, see \cite[Lemma 3.4]{Y}.

Next we define the \emph{symmetrizer} $U$ by 
\begin{align}\label{eq:U}
 U:=\sum_{w\in W_0}t_{w_0 w}^{-\hf}T_w.
\end{align}
By \cite[(5.5.9)]{M}, we then have
\begin{align}\label{eq:UTUt}
 U T_i=U t_i^\hf, \quad T_i U =t_i^\hf U \quad (i=1,\dots,n).
\end{align}

Hereafter we denote the ring of $W_0$-invariant Laurent polynomials by 
\[
\bK[x^{\pm1}]^{W_0}:=\pr{f\in\bK[x^{\pm1}] \mid w.f=f, \, w\in W_0}.
\]
Here $W_0$ acts on $x^\lambda$ \eqref{eq:xlam} by the action on the weight $\lambda$.
Also recall that for each $\mu \in (\frh_\bZ^*)_+ \subset \frh_\bZ^*$
we defined $w(\mu) \in t(\mu) W_0 \subset W$ by \eqref{eq:w(mu)}.

\begin{fct}[{\cite[Theorem 6.6]{S}}]\label{fct:Pmu}
For each dominant weight $\lambda \in (\frh_\bZ^*)_+$, the element
\[
 P_\lambda(x) := \frac{1}{t_{w_\mu}^{-\hf}W_{\lambda}(t)} U S_{w(\lambda)}^Y 1
           = \frac{1}{t_{w_\lambda}^{-\hf}W_{\mu}(t)} U E_{\lambda}(x) \in \wt{DH}(W)
\]
belongs to $\bK[x^{\pm1}]^{W_0}$. 
We call $P_\lambda(x)$ \emph{the} (\emph{monic}) 
\emph{Koornwinder polynomial associated to $\lambda$}.
\end{fct}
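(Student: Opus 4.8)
The plan is to adapt Stokman's argument \cite[Theorem 6.6]{S} to the present $(C_n^\vee,C_n)$ setting, in three steps. The formal content of the statement --- membership in $\bK[x^{\pm1}]^{W_0}$ --- I would obtain directly: $E_\lambda(x)\in\bK[x^{\pm1}]$ by Fact \ref{fct:Emu}, and each $T_w$ lies in $\End_\bK(\bK[x^{\pm1}])$ by the basic representation \eqref{eq:NRep}, so the symmetrizer $U$ of \eqref{eq:U} keeps $E_\lambda(x)$ inside $\bK[x^{\pm1}]$. Moreover, from $T_i = t_i^\hf + c_i(x^{\alpha_i})(s_i-1)$ in \eqref{eq:Ticd} with $c_i(x^{\alpha_i})\neq 0$, a Laurent polynomial $f$ satisfies $s_i.f=f$ if and only if $T_i f = t_i^\hf f$; combining this with $T_i U = t_i^\hf U$ from \eqref{eq:UTUt} gives $s_i.(U E_\lambda(x)) = U E_\lambda(x)$ for $i=1,\dots,n$, hence $W_0$-invariance.

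To justify that $U E_\lambda(x)$ is, up to scalar, the Koornwinder polynomial, I would next use that $E_\lambda(x)=S^Y_{w(\lambda)}1$ is, by the intertwining relation \eqref{eq:SYl}, a simultaneous eigenfunction of the commuting family $\{Y^\mu\mid\mu\in\frh_\bZ^*\}$, say $Y^\mu E_\lambda(x)=\gamma_\lambda(\mu)E_\lambda(x)$, where $\gamma_\lambda$ is computed from \eqref{eq:Ylam1} after conjugating by $w(\lambda)$. For a $W_0$-symmetric $p\in\bK[Y^{\pm1}]^{W_0}$, the operator $p(Y)$ commutes with each $T_i$ ($i=1,\dots,n$) --- a consequence of the Lusztig relations (Fact \ref{fct:Lus}) transported by the anti-involution $\phi$ in \eqref{eq:inv} --- hence with $U$, and it preserves $\bK[x^{\pm1}]^{W_0}$, where it restricts to the symmetric Macdonald--Koornwinder $q$-difference operator $D_p$. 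Therefore $D_p(U E_\lambda(x)) = U(p(Y)E_\lambda(x)) = p(\gamma_\lambda)\,U E_\lambda(x)$, so $U E_\lambda(x)$ is a $W_0$-invariant joint eigenfunction of all the $D_p$. Since the $D_p$ act triangularly in the dominance order and their joint spectrum separates dominant weights, such an eigenfunction is unique up to scalar; thus $U E_\lambda(x) = N_\lambda P_\lambda(x)$ with $P_\lambda(x)$ the monic one and $N_\lambda$ the coefficient of $x^\lambda$ in $U E_\lambda(x)$.

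It then remains to show $N_\lambda = t_{w_\lambda}^{-\hf}W_\lambda(t)$, which I expect to be the main obstacle. Writing $E_\lambda(x) = c_\lambda x^\lambda + (\text{strictly lower monomials})$ --- the coefficient $c_\lambda$ of $x^\lambda$ in $S^Y_{w(\lambda)}1$ being pinned down by tracking the $Y$-intertwiners along the chosen reduced expression of $w(\lambda)$ --- I would expand $U E_\lambda(x) = \sum_{w\in W_0} t_{w_0w}^{-\hf} T_w E_\lambda(x)$ and isolate the coefficient of $x^\lambda$ using the triangular action of $T_w$ from \eqref{eq:Ticd}. Dominance of $\lambda$ forces only the contributions indexed by the stabilizer $W_\lambda$ to survive, and these assemble into exactly the sum appearing in the Lemma stated above; applying that Lemma (i.e.\ \cite[Lemma 3.4]{Y}) yields $N_\lambda = t_{w_\lambda}^{-\hf}W_\lambda(t)$, giving the identification $P_\lambda(x) = \bigl(t_{w_\lambda}^{-\hf}W_\lambda(t)\bigr)^{-1} U E_\lambda(x)$. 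The delicate points are the exact value of $c_\lambda$ and the bookkeeping of cancellations in $U E_\lambda(x)$ needed to recognize the stabilizer sum; everything else is formal manipulation with the basic representation and the intertwiners.
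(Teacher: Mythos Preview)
The paper does not prove this statement: it is recorded as Fact~\ref{fct:Pmu}, cited from \cite[Theorem~6.6]{S}, with only the one-line remark afterward that the coefficient of $x^\lambda$ in $U S_{w(\lambda)}^Y 1$ equals $t_{w_\lambda}^{-\hf}W_\lambda(t)$. So there is no in-paper argument to compare against; your sketch is effectively a reconstruction of Stokman's proof, and its outline is sound.

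Your Steps~1 and~2 are correct. The equivalence $s_i.f=f \Leftrightarrow T_i f = t_i^{\hf}f$ that you extract from \eqref{eq:Ticd} is exactly what the paper itself invokes later (inside the proof of Lemma~\ref{lem:PmuEvmu}), and the eigenfunction argument via centrality of $\bK[Y^{\pm1}]^{W_0}$ and triangularity of the $D_p$ is the standard mechanism.

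For Step~3 your plan of tracking the monomial $x^\lambda$ through $\sum_{w} t_{w_0 w}^{-\hf}T_w E_\lambda$ is workable but heavier than necessary, and the assertion that ``only the contributions indexed by the stabilizer $W_\lambda$ survive'' is not literally true at the level of monomials: $T_w$ for $w\notin W_\lambda$ does feed back into the $x^\lambda$-coefficient via the $d_i(x^{\alpha_i})$ correction terms in \eqref{eq:Ticd}. A cleaner route---which the paper actually carries out, for other purposes, in \eqref{eq:PE:US1}---is to use the factorization
\[
 U = \Bigl[\tsum_{v\in W^\lambda} S_v^Y\, b_{(v^{-1},v_\lambda^{-1})}(Y)\Bigr]
     \Bigl[\tsum_{u\in W_\lambda} t_{w_\lambda u}^{-\hf}T_u\Bigr]
\]
coming from Lemma~\ref{lem:U} and Corollary~\ref{cor:Umu}, together with the relation $T_u E_\lambda = t_u^{\hf}E_\lambda$ for $u\in W_\lambda$ (which holds because $w(\lambda)$ is shortest in $t(\lambda)W_0$, so $s_i\in W_\lambda$ forces $T_i$ to act by $t_i^{\hf}$ on $E_\lambda$). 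The inner $W_\lambda$-sum then collapses to the scalar $\sum_{u\in W_\lambda} t_{w_\lambda u}^{-\hf}t_u^{\hf} = t_{w_\lambda}^{-\hf}W_\lambda(t)$ directly, and the outer $W^\lambda$-sum only mixes the $E_{v.\lambda}$ for $v\in W^\lambda$, among which $E_\lambda$ alone carries $x^\lambda$. This bypasses both the computation of $c_\lambda$ and the cancellation bookkeeping you anticipate.
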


Note that the coefficient of $x^\lambda$ in $P_\lambda(x)$ is $1$ since the coefficient of 
the top term $x^\lambda$ in $U S_{w(\lambda)}^Y 1$ is $t_{w_\lambda}^{-\hf}W_{\lambda}(t)$.
To emphasize the root system $(C_n^\vee,C_n)$, we call $P_\lambda(x)$ 
\emph{the Koornwinder polynomial of rank $n$} or \emph{of type $(C_n^\vee,C_n)$}.

\section{Littlewood-Richardson coefficients}\label{s:LR}

Yip \cite[Theorem 4.4]{Y} derived a combinatorial explicit formula of LR coefficients 
for Macdonald polynomials $P_\lambda(x)$ in the case of untwisted affine root systems.
In this section, we derive a $(C_n^\vee,C_n)$-analogue of Yip's formula.
The outline of the derivation is quite similar to Yip's proof \cite[\S\S 3.1--4.1]{Y}, 
but we need non-trivial adjustments in each step.

\subsection{Products of non-symmetric Koornwinder polynomials and monomials}

In \cite[Theorem 3.3]{Y}, Yip derived an expansion formula for the product of 
the monomial $x^\mu$ and the non-symmetric Macdonald polynomial $E_\lambda(x)$ 
in the case of untwisted affine root systems.
In this subsection, we give its $(C_n^\vee,C_n)$-type analogue (Corollary \ref{cor:xmuE}).

We will use the notations in \S \ref{sss:AH:DH}.
In particular, $\wt{DH}(W)$ is the extension \eqref{eq:wDH} of 
the double affine Hecke algebra $DH(W)$ of type $(C_n^\vee,C_n)$,
$S^Y_i \in \wt{DH}(W)$ is the $Y$-intertwiner \eqref{eq:SYi}, 
and $S^Y_w$ for $w \in W$ is the product of $S^Y_i$'s \eqref{eq:SYw}.
We also denote the Bruhat order in $W$ by $\bleq$.

As a preparation of Proposition \ref{prp:xzSY},
we derive a product formula of the $Y$-intertwiners. 

\begin{prp}\label{prp:SYiw}
For $w \in W$ and $i=0,\ldots,n$, we have the following relations in $\wt{DH}(W)$.
\begin{enumerate}[nosep,label=(\roman*)]
\item 
If $w \bleq s_i w$, then $S^Y_iS^Y_w = S^Y_{s_i w}$.
\item
If $w \bgeq s_i w$, then $S^Y_iS^Y_w = n_i(Y^{-\alpha_i}) S^Y_{s_i w}$,
where 
\begin{align*}
&n_0(Y^{\beta}) :=
 \frac{(1-u_n^{ \hf} u_0^{ \hf} Y^{\frac{\beta}{2}})
       (1+u_n^{ \hf} u_0^{-\hf} Y^{\frac{\beta}{2}})}{1-Y^{\beta}}
 \frac{(1+u_n^{-\hf} u_0^{ \hf} Y^{\frac{\beta}{2}})
       (1-u_n^{-\hf} u_0^{-\hf} Y^{\frac{\beta}{2}})}{1-Y^{\beta}}
 \quad (\beta\in W.\alpha_0), \\
&n_i(Y^{\beta}) :=
 \frac{1-tY^{\beta}}{1-Y^{\beta}} \frac{1-t^{-1}Y^{\beta}}{1-Y^{\beta}}
 \hspace{16.5em} (\beta\in W.\alpha_i,\ 0<i<n), \\
&n_n(Y^{\beta}) :=
 \frac{(1-t_n^{ \hf} t_0^{ \hf} Y^{\frac{\beta}{2}})
       (1+t_n^{ \hf} t_0^{-\hf} Y^{\frac{\beta}{2}})}{1-Y^{\beta}}
 \frac{(1+t_n^{-\hf} t_0^{ \hf} Y^{\frac{\beta}{2}})
       (1-t_n^{-\hf} t_0^{-\hf} Y^{\frac{\beta}{2}})}{1-Y^{\beta}}
 \quad (\beta\in W.\alpha_n).
\end{align*}
\end{enumerate}
\end{prp}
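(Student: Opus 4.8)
The plan is to work directly from the defining relations of the $Y$-intertwiners in \eqref{eq:SYi}, namely $S_i^Y = T_i + \psi_i^+(Y^{-\alpha_i}) = T_i^{-1} + \psi_i^-(Y^{-\alpha_i})$, together with the fact (stated just after \eqref{eq:SYw}) that the $S_i^Y$ satisfy the same braid relations as the $S_i^x$. Part (i) is essentially the definition: if $w \bleq s_i w$, then $\ell(s_i w) = \ell(w)+1$, so concatenating a reduced expression of $w$ with $s_i$ on the left gives a reduced expression of $s_i w$; hence $S_i^Y S_w^Y = S_{s_iw}^Y$ by well-definedness of $S_w^Y$ via the braid relations. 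The only thing to check is that this concatenation really is reduced, which is the standard length characterization of the Bruhat covering relation, so part (i) costs essentially nothing.

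For part (ii), when $w \bgeq s_i w$ we have $\ell(s_i w) = \ell(w) - 1$, so we may pick a reduced expression of $w$ of the form $w = s_i \cdot (s_i w)$ with $s_i w$ reduced, giving $S_w^Y = S_i^Y S_{s_i w}^Y$. Therefore $S_i^Y S_w^Y = (S_i^Y)^2 S_{s_i w}^Y$, and the whole proposition reduces to computing the square $(S_i^Y)^2$ as a rational function in $Y^{-\alpha_i}$. I would compute $(S_i^Y)^2$ by multiplying the two expressions $S_i^Y = T_i + \psi_i^+(Y^{-\alpha_i})$ and $S_i^Y = T_i^{-1} + \psi_i^-(Y^{-\alpha_i})$: this gives
\[
 (S_i^Y)^2 = T_i T_i^{-1} + T_i \psi_i^-(Y^{-\alpha_i}) + \psi_i^+(Y^{-\alpha_i}) T_i^{-1} + \psi_i^+(Y^{-\alpha_i})\psi_i^-(Y^{-\alpha_i}).
\]
The cross terms must be handled using the commutation of $T_i$ (resp.\ $T_i^\vee$ when $i=0$) past functions of $Y$, which is governed by $S_i^Y Y^\lambda = Y^{s_i\lambda} S_i^Y$ \eqref{eq:SYl} — equivalently, by the $\phi$-image of the Lusztig relations (Fact \ref{fct:Lus}). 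Concretely, from $S_i^Y = T_i + \psi_i^+(Y^{-\alpha_i})$ one extracts $T_i Y^\lambda - Y^{s_i\lambda} T_i$ as a multiple of $(Y^\lambda - Y^{s_i\lambda})$, and this lets one move all the $T_i$'s to one side; since $S_{s_iw}^Y$ absorbs nothing further, one is left with a scalar rational function times $S_{s_iw}^Y$. Alternatively, and perhaps more cleanly, one notes $(S_i^Y)^2$ commutes with all $Y^\lambda$ (because $S_i^Y Y^\lambda = Y^{s_i\lambda}S_i^Y$ applied twice), hence $(S_i^Y)^2 \in \bK(Y)$, and then one simply evaluates it — e.g. by applying both sides to an eigenvector of the $Y$'s, or by using $S_i^Y = c_i^Y(\cdot)\, s_i + (\text{function of }Y)$ in analogy with \eqref{eq:Sics}.

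The cleanest route, which I would actually adopt, is the second: since $\phi$ is an anti-involution fixing each $T_i$, the relation $S_i^x = c_i(x^{\alpha_i}) s_i$ from \eqref{eq:Sics} dualizes. Tracking through $\phi$, one gets $(S_i^x)^2 = c_i(x^{\alpha_i}) c_i(x^{-\alpha_i})$ on the $x$-side (this is the standard intertwiner-square identity), and applying $\phi$ turns $x^{\alpha_i} \mapsto Y^{-\alpha_i}$, giving $(S_i^Y)^2 = c_i^Y(Y^{-\alpha_i})\, c_i^Y(Y^{\alpha_i})$ where $c_i^Y$ is the $\phi$-image of $c_i$ with the parameter swap $u_n \leftrightarrow t_0$ from \eqref{eq:inv}. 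Writing this product out and simplifying the numerator — using that $c_i(z)c_i(z^{-1})$ telescopes into the stated quadratic-in-$Y^{\beta/2}$ numerators for $i = 0$ and $i = n$, and into $(1-tY^\beta)(1-t^{-1}Y^\beta)/(1-Y^\beta)^2$ for $0 < i < n$ — yields exactly $n_i(Y^{-\alpha_i})$. Finally, conjugating by $S_{s_iw}^Y$ turns $Y^{-\alpha_i}$ into $Y^{s_iw \cdot(-\alpha_i)} = Y^{\beta}$ for $\beta \in W.\alpha_i$, matching the statement. The main obstacle is purely bookkeeping: getting the parameter substitutions ($u_i, t_i$ versus their $\phi$-images, the $u_n \leftrightarrow t_0$ swap) and the signs in the four linear factors of $n_0$ and $n_n$ exactly right, since the $i=0,n$ cases involve the non-reduced root directions where $\psi^\pm$ has the two-term numerator; I would verify these by a direct low-rank or specialization check against \eqref{eq:psi(z)}.
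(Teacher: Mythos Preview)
Your proposal is correct and lands on the same key identity as the paper, namely $(S_i^Y)^2 = c_i^\vee(Y^{-\alpha_i})\,c_i^\vee(Y^{\alpha_i}) = n_i(Y^{-\alpha_i})$, where $c_i^\vee := \phi(c_i)$. The only difference is packaging: the paper writes $S_w^Y = c_{i_1}^\vee(Y^{-\beta_1})\cdots c_{i_r}^\vee(Y^{-\beta_r})\,w$ explicitly and, in the case $\ell(s_iw)=\ell(w)-1$, commutes $s_i$ through the product until it meets the factor $c_{i_k}^\vee(Y^{-\alpha_i})$ and cancels, whereas you use the exchange lemma to write $S_w^Y = S_i^Y S_{s_iw}^Y$ and reduce immediately to the square $(S_i^Y)^2$. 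Your route is a bit shorter; the paper's makes the role of the inversion set $\cL(w)$ more visible, which it reuses later. One small remark: your final sentence about ``conjugating by $S_{s_iw}^Y$'' is unnecessary --- the statement already has $n_i(Y^{-\alpha_i})$ on the left of $S_{s_iw}^Y$, and the condition ``$\beta\in W.\alpha_i$'' in the displayed formulas for $n_i$ is only specifying the orbit type, not asking you to move the coefficient across.
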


\begin{proof}
Fix $w\in W$ and choose a reduced expression $w=s_{i_1}\cdots s_{i_r}$.
By the definitions \eqref{eq:SYw}, \eqref{eq:SYi} 
and the equation \eqref{eq:Sics}, we have
\begin{align*}
  S_w^Y
&=S_{i_1}^Y\cdots S_{i_r}^Y
 =(T_{i_1}^\vee+\psi^+_{i_1}(Y^{-\alpha_{i_1}})) \cdots 
  (T^\vee_{i_r}+\psi^+_{i_r}(Y^{-\alpha_{i_r}})) \\
&=c_{i_1}^\vee(Y^{-\alpha_{i_1}})s_{i_1} \cdots c_{i_r}^\vee(Y^{-\alpha_{i_r}})s_{i_r}
 =c_{i_1}^\vee(Y^{-\beta_1})\cdots c_{i_r}^\vee(Y^{-\beta_r})w.
\end{align*}
Here we set $\beta_k:=s_{i_1} \cdots s_{i_{k-1}}(\alpha_{i_k})$ ($k=1,\ldots,r$) and 
\[
 c^\vee_i(z):=\phi(c_i(z))=
 \begin{dcases}
 u_n^{-\hf}\frac{(1-u_n^{\hf}u_0^{\hf}z^\hf)(1+u_n^\hf u_0^{-\hf}z^\hf)}{1-z} &(i=0),\\
 t^{-\hf}\frac{1-tz}{1-z} & (0 < i < n), \\
 t_n^{-\hf}\frac{(1-t_n^{\hf}t_0^{\hf}z^\hf)(1+t_n^\hf t_0^{-\hf}z^\hf)}{1-z} &(i=n).
 \end{dcases}
\]
Since $w=s_{i_1} \cdots s_{i_r}$ is a reduced expression, 
we have $\beta_k\in S_+$ for $k=1,\dots,r$,
where $S_+ \subset S$ denotes the set of positive affine roots \eqref{eq:S+}.
The product $S_i^Y$ ($i=0,\ldots,n$) and $S_w^Y$ is now calculated as
\begin{align}\label{eq:SYiSYw}
 S_i^Y S_w^Y = 
 c^\vee_i(Y^{-\alpha_i}) s_i c_{i_1}^\vee(Y^{-\beta_1}) \cdots 
 c^\vee_{i_r}(Y^{-\beta_r}) w.
\end{align}
If $\ell(s_iw)=\ell(w)+1$, 
then the equation \eqref{eq:SYiSYw} becomes $S_i^Y S_w^Y=S_{s_i w}^Y$. 
If $\ell(s_iw)=\ell(w)-1$, then there exists $k\in\pr{1,\dots,r}$ such that 
$s_i(\beta_{k-1})\in S_+$ and $s_i(\beta_k)\in S_-$.
Since we have $\beta_k=\alpha_i$, the equation \eqref{eq:SYiSYw} becomes
\begin{align*}
 S_i^Y S_w^Y
 &=c_i^\vee(Y^{-\alpha_i})s_i c_{i_1}^\vee(Y^{-\beta_1}) \cdots 
   c_{i_r}^\vee(Y^{-\beta_r})w\\ 
 &=c_i^\vee(Y^{-\alpha_i}) c_{i_1}^\vee(Y^{-s_i(\beta_1)}) \cdots 
   c_{i_{k-1}}^\vee(Y^{-s_i(\beta_{k-1})})
   s_ic_{i_k}^\vee(Y^{-\alpha_i}) \cdots c_{i_r}^\vee(Y^{-\beta_r})w \\
 &=c_i^\vee(Y^{-\alpha_i}) c_i^\vee(Y^{\alpha_i}) c_{i_1}(Y^{-s_i(\beta_1)}) \cdots 
   \wh{c_{i_k}^\vee(Y^{\alpha_i})} \cdots c_{i_r}^\vee(Y^{-s_i(\beta_r)})s_i w 
  =c_i^\vee(Y^{-\alpha_i})c_i^\vee(Y^{\alpha_i})S_{s_iw}^Y.
\end{align*}
Here the symbol $\ \wh{\,} \ $ denotes skipping the term. 
Then the consequence follows from the equality 
$c_i^\vee(Y^{-\alpha_i})c_i^\vee(Y^{\alpha_i})=n_i(Y^{-\alpha_i})$,
which can be checked by a direct calculation.
\end{proof}

The same discussion shows the following statement. 

\begin{cor}\label{cor:SYwi}
For $w \in W$ and $i=0,\ldots,n$.
we have the following relations in $\wt{DH}(W)$
\begin{enumerate}[nosep,label=(\roman*)]
\item 
If $w \bleq w s_i$, then $S^Y_w S^Y_i = S^Y_{w s_i}$.
\item
If $w \bgeq w s_i$, then $S^Y_w S^Y_i = S^Y_{w s_i}n_i(Y^{-\alpha_i})$,
where $n_i(Y^{-\alpha_i})$ is given in Proposition \ref{prp:SYiw}. 
\end{enumerate}
\end{cor}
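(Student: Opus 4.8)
The plan is to mirror the proof of Proposition \ref{prp:SYiw}, but multiply $S^Y_i$ on the right instead of on the left. First I would fix $w \in W$ together with a reduced expression $w = s_{i_1}\cdots s_{i_r}$, and use \eqref{eq:SYw}, \eqref{eq:SYi} and \eqref{eq:Sics} to write
\[
 S^Y_w = c^\vee_{i_1}(Y^{-\beta_1})\cdots c^\vee_{i_r}(Y^{-\beta_r})\, w,
\]
where $\beta_k := s_{i_1}\cdots s_{i_{k-1}}(\alpha_{i_k}) \in S_+$, exactly as in that proof. Then I would compute
\[
 S^Y_w S^Y_i = c^\vee_{i_1}(Y^{-\beta_1})\cdots c^\vee_{i_r}(Y^{-\beta_r})\, w\, c^\vee_i(Y^{-\alpha_i})\, s_i,
\]
and push the element $w$ past $c^\vee_i(Y^{-\alpha_i})$ using the commutation rule $w\, Y^\mu = Y^{w\mu}\, w$ (equivalently, this is the right-handed analogue of \eqref{eq:SYl}). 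Setting $\gamma := w(\alpha_i)$, this rewrites the product as $c^\vee_{i_1}(Y^{-\beta_1})\cdots c^\vee_{i_r}(Y^{-\beta_r})\, c^\vee_i(Y^{-\gamma})\, w s_i$, which — after matching up the rational-function factors — is visibly the reduced-expression expansion of $S^Y_{w s_i}$ when $\ell(w s_i) = \ell(w)+1$, giving case (i).

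For case (ii), when $w \bgeq w s_i$, i.e.\ $\ell(w s_i) = \ell(w)-1$, the reduced expression $s_{i_1}\cdots s_{i_r}$ no longer stays reduced after appending $s_i$: there is an index $k$ where the factor $s_{i_k}$ gets cancelled. Concretely, $w s_i$ has a reduced expression $s_{i_1}\cdots \widehat{s_{i_k}}\cdots s_{i_r}$, and $\gamma = w(\alpha_i)$ coincides (up to sign) with $\beta_k$, so the factor $c^\vee_i(Y^{-\gamma})$ and the factor $c^\vee_{i_k}(Y^{-\beta_k})$ combine, producing one extra rational factor. The bookkeeping is the right-handed version of the last display in the proof of Proposition \ref{prp:SYiw}: one isolates the pair $c^\vee_{i_k}(Y^{-\beta_k})\, c^\vee_i(Y^{\beta_k})$ (the sign flip comes from $\gamma = -w s_i(\alpha_i)$ vs.\ $\beta_k$), and the identity $c^\vee_i(Y^{-\alpha_i})c^\vee_i(Y^{\alpha_i}) = n_i(Y^{-\alpha_i})$ — already verified in the proof of Proposition \ref{prp:SYiw} — converts this into the factor $n_i(Y^{-\alpha_i})$ appearing on the right. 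Thus $S^Y_w S^Y_i = S^Y_{w s_i}\, n_i(Y^{-\alpha_i})$, where $n_i(Y^{-\alpha_i})$ sits on the right because it is built from $Y^{-\gamma}$-type factors after everything has been moved past $w$.

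Alternatively — and this is the slicker route I would actually take in writing — I would deduce the corollary directly from Proposition \ref{prp:SYiw} by applying the Cherednik anti-involution $\phi$, or rather a suitable symmetry. Since $\phi$ is an anti-automorphism of $\wt{DH}(W)$ with $\phi(S^Y_i)$ related to $S^x_i$ (cf.\ \eqref{eq:SYi}), the map that reverses reduced words turns a left multiplication statement into a right multiplication statement; applying it to $S^Y_i S^Y_{w^{-1}} = (\text{stuff})\, S^Y_{s_i w^{-1}}$ and using $\phi(S^Y_{w^{-1}})$ vs.\ $S^Y_w$, together with the fact that $w^{-1}\bleq s_i w^{-1}\iff w\bleq w s_i$, yields exactly the two cases. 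I expect the main obstacle to be purely notational: tracking which argument the rational function $n_i$ takes (it should be $Y^{-\alpha_i}$ in both propositions, since $n_i$ is built from the simple-root direction, not from $\gamma$), and making sure the anti-involution is applied to an honest algebra identity rather than to the basic-representation action. Since the statement only asserts "the same discussion shows," I would keep the write-up to a sentence or two pointing to the argument of Proposition \ref{prp:SYiw} with the roles of left and right interchanged, rather than repeating the computation.
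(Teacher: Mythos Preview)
Your main route is correct and is exactly what the paper means by ``the same discussion shows'': expand $S^Y_w$ as $c^\vee_{i_1}(Y^{-\beta_1})\cdots c^\vee_{i_r}(Y^{-\beta_r})\,w$, multiply $S^Y_i=c^\vee_i(Y^{-\alpha_i})s_i$ on the right, commute $w$ past the rational factor to get $c^\vee_i(Y^{-w\alpha_i})$, and in case~(ii) use that $-w\alpha_i=\beta_k$ together with $c^\vee_i(Y^{-\beta})c^\vee_i(Y^{\beta})=n_i(Y^{-\beta})$; the remaining $c^\vee$-factors reconstitute $S^Y_{ws_i}$ because this product depends only on the inversion set of $ws_i$ (all factors commute and the type label of each root is $W$-orbit invariant), and moving $n_i(Y^{-\beta_k})$ back past $ws_i$ gives $n_i(Y^{-\alpha_i})$ on the right.

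One caution on your ``slicker route'': the anti-involution $\phi$ exchanges $S^x_i$ with $S^Y_i$ rather than reversing products of $S^Y$'s, so it does not directly convert Proposition~\ref{prp:SYiw} into Corollary~\ref{cor:SYwi}. What would work instead is the trivial observation that if $ws_i\prec_B w$ then $w$ admits a reduced expression ending in $s_i$, so $S^Y_w=S^Y_{ws_i}S^Y_i$ by case~(i), whence $S^Y_wS^Y_i=S^Y_{ws_i}(S^Y_i)^2=S^Y_{ws_i}n_i(Y^{-\alpha_i})$; but this is already subsumed in the mirrored computation you describe.
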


Next we recall the notations on alcove walks in \S \ref{sss:r:aw}.
Given $z,w \in W$ together with a reduced expression $z=s_{i_r}\cdots s_{i_1}$,
we defined the set $\Gamma(\oa{z},w)$ of alcove walks 
of type $\oa{z}=(i_r,\ldots,i_1)$ beginning at $w A$.
For an alcove walk $p=(p_0,\ldots,p_r) \in \Gamma(\oa{z},w)$,
the $k$-th step means the the transition from $p_{k-1}$ to $p_k$, 
which is classified into the four types in Table \ref{tab:sp}.

Now we define $x^z \in DH(W)$ for $z \in W$ with a chosen reduced expression 
$z=s_{i_r}\cdots s_{i_1}$. 
Let $q$ be the alcove walk given by
\[
 q := 
 (z A, \, z s_{i_1}A, \, z s_{i_1}s_{i_2}A, \, \ldots, \, z s_{i_1} \cdots s_{i_r}A=A) 
 \in \Gamma(\oa{z}^{-1},z).
\]
Here $\oa{z}^{-1}:=\oa{z^{-1}}=(i_1,\ldots,i_r)$.
Then we define $x^z$ by
\begin{align}\label{eq:xzT}
 x^z := (T_{i_r}^\vee)^{\ep_r}\cdots (T_{i_1}^\vee)^{\ep_1},
\end{align}
where $T_i^\vee := \phi(T_i) \in DH(W)$ as in \eqref{eq:Tiv}, 
and we set $\ep_k := 1$ if the $k$-th step is a positive crossing, 
and $\ep_k := -1$ if the $k$-th step is a negative crossing 
according to the classification in Table \ref{tab:sp}.

\begin{prp}\label{prp:xzSY}
Given $z,w\in W$ with a chosen reduced expression $z=s_{i_r}\cdots s_{i_1}$, we have
\[
 x^z S_w^Y = \sum_{p \in \Gamma(\oa{z}^{-1},w^{-1})} S_{e(p)^{-1}}^Y g_p(Y)n_p(Y)
\]
in $\wt{DH}(W)$, where $e(p) \in W$ is the element \eqref{eq:e(p)},
and the terms $g_p(Y)$ and $n_p(Y)$ are given by
\begin{align*}
&g_p(Y) := \prod_{k \in \varphi_-(p)}\bigl(-\psi_{i_k}^-(Y^{-h_k(p)})\bigr)
           \prod_{k \in \varphi_+(p)}
           \bigl(-\psi_{i_k}^+(Y^{-h_k(p)})\bigr), \\
&n_p(Y) := \prod_{k \in \xi_{\des}(p)}n_{i_k}(Y^{-h_k(p)}).
\end{align*}
Here $h_k(p)$ is given by \eqref{eq:hk(p)},
$\varphi_+(p)$ and $\varphi_-(p)$ are by \eqref{eq:vp(p)},
$\xi_{\des}(p)$ is by \eqref{eq:xp(p)},  
$\psi_i^{\pm}(z)=\phi(\varphi_i^{\pm}(z))$ is by \eqref{eq:psi(z)},
and $n_i(z)$ is given in Proposition \ref{prp:SYiw}.
\end{prp}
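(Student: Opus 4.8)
The plan is to prove Proposition \ref{prp:xzSY} by induction on the length $r=\ell(z)$, expanding $x^z$ one intertwiner factor at a time and tracking how each factor $(T_{i_k}^\vee)^{\pm1}$ interacts with the product $S_w^Y$. The base case $r=0$ is trivial: $x^e=1$ and $\Gamma(\oa{e},w^{-1})$ is a single trivial walk with $e(p)=w^{-1}$, so both sides equal $S_w^Y$. For the inductive step, write $z=s_{i_r}\cdots s_{i_1}$ with $z'=s_{i_{r-1}}\cdots s_{i_1}$, so that $x^z=(T_{i_r}^\vee)^{\ep_r}x^{z'}$ where $\ep_r=\pm1$ according to whether the last step of the defining alcove walk $q$ is a positive or negative crossing (equivalently, whether $\ol{h}\in\wt R_+$ or $\wt R_-$ for the relevant hyperplane). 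Applying the inductive hypothesis to $x^{z'}S_w^Y$ reduces the problem to computing $(T_{i}^\vee)^{\pm1}S_u^Y$ for a single generator index $i=i_r$ and arbitrary $u=e(p')^{-1}$ running over walks $p'\in\Gamma(\oa{z'}^{-1},w^{-1})$.

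The core computation is therefore the single-step case: express $T_i^\vee$ and $(T_i^\vee)^{-1}$ in terms of the $Y$-intertwiner $S_i^Y$ and rational functions of $Y$. From \eqref{eq:SYi} we have $S_i^Y=T_i^\vee+\psi_i^+(Y^{-\alpha_i})=(T_i^\vee)^{-1}+\psi_i^-(Y^{-\alpha_i})$ for $i=1,\dots,n$ (and the analogous identity with $qY_1^2$ for $i=0$), so $T_i^\vee=S_i^Y-\psi_i^+(Y^{-\alpha_i})$ and $(T_i^\vee)^{-1}=S_i^Y-\psi_i^-(Y^{-\alpha_i})$. Multiplying on the right by $S_u^Y$ and using the commutation relation \eqref{eq:SYl} in the form $Y^\lambda S_u^Y = S_u^Y Y^{u^{-1}\lambda}$ to move the rational $Y$-factor past $S_u^Y$, together with Proposition \ref{prp:SYiw} to evaluate $S_i^Y S_u^Y$ (which is either $S_{s_i u}^Y$, when $u\bleq s_i u$, or $n_i(Y^{-\alpha_i})S_{s_i u}^Y$, when $u\bgeq s_i u$), yields a sum of two terms, each of the form (rational function of $Y$)$\cdot S_{v}^Y$ with $v\in\{u,s_iu\}$. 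The crossing term ($v=s_iu$) extends $p'$ by a crossing step; the other term ($v=u$) extends $p'$ by a folding step, and the coefficient picked up is exactly $-\psi_i^\mp(Y^{-h})$ where $h=h_r(p)$ is the affine root of the separating hyperplane for the new step — one checks that $u^{-1}(\pm\alpha_i)=-h_r(p)$ so the argument of $\psi$ is correct — while the factor $n_i$ attached to a descending crossing matches the definition of $\xi_{\des}(p)$. Bookkeeping of the four step types in Table \ref{tab:sp} against the signs $\ep_k$ and the $\varphi_\pm$, $\xi_{\des}$ sets confirms that summing over the two choices at each step reconstitutes $\sum_{p\in\Gamma(\oa{z}^{-1},w^{-1})}S_{e(p)^{-1}}^Y g_p(Y)n_p(Y)$.

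The main obstacle I anticipate is the sign and index bookkeeping: correctly matching (i) the exponent $\ep_r$ (hence whether we use $\psi_i^+$ or $\psi_i^-$, and $T_i^\vee$ versus $(T_i^\vee)^{-1}$) with the orientation of the edge crossed, as dictated by Figure \ref{fig:as} and the $\wt R_\pm$ rule; (ii) the distinction between a positive folding and a negative folding in \eqref{eq:vp(p)}, which controls whether the folding coefficient is $-\psi_{i_k}^+$ or $-\psi_{i_k}^-$; and (iii) verifying the identity $Y^{-h_k(p)} = Y^{u^{-1}(\mp\alpha_i)}$ relating the intrinsic affine root $h_k(p)$ of \eqref{eq:hk(p)} to the translated simple root, which requires knowing that $p_{k-1}=v A$ with $v=e(p')$ and that the hyperplane separating $v A$ and $v s_{i}A$ is $v.H_{\alpha_i}$, i.e.\ $h_k(p)=v(\alpha_i)$ up to sign. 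The algebraic identity $c_i^\vee(z)c_i^\vee(z^{-1})=n_i(z^{-1})$ used implicitly in Proposition \ref{prp:SYiw} is routine, and the commutation of rational $Y$-functions past $S_u^Y$ is immediate from \eqref{eq:SYl}, so once the combinatorial dictionary is pinned down the proof closes by a clean induction. Since this is the formal parallel of \cite[\S 3.2]{Y}, I would state the single-step computation as the inductive engine and refer to the tables for the type classification rather than belabor each case.
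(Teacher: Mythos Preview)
Your proposal is correct and follows essentially the same approach as the paper: induction on $\ell(z)$, peeling off the leftmost factor $(T_{i_r}^\vee)^{\ep_r}$ of $x^z$, rewriting it as $S_{i_r}^Y-\psi_{i_r}^{\ep_r}(Y^{-\alpha_{i_r}})$ via \eqref{eq:SYi}, and then invoking Proposition~\ref{prp:SYiw} for the $S_i^Y S_u^Y$ term and the commutation \eqref{eq:SYl} for the $\psi$ term to produce the crossing and folding extensions of each walk. The sign/index bookkeeping you flag (matching $\ep_r$ to $\varphi_\pm$ and identifying $h_r(p)=e(p')\alpha_{i_r}$) is exactly the content the paper handles case-by-case, so once you pin that down the induction closes just as you describe.
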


\begin{proof}
We show the statement by induction on the length of $z \in W$.
If $\ell(z)=0$, that is $z=e$,
then the right hand side consists only of the term for $p=(p_0=w A)$, 
so that it is equal to  $S^Y_w$, and we have the relation.

Next we assume $z \neq e$ and that 
the result holds for any element $w\in W$ such that $w<\ell(z)$.

Fix a reduced expression of $z$, 
and write it as $z=s_i \zeta$, $\zeta=s_{i_r} \cdots s_{i_1}$. 
By the hypothesis, we can write 
\begin{align}\label{eq:xzSY:0}
 x^z S_w^Y = (T_i^\vee)^{\ep} x^{\zeta} S_w^Y 
 = \sum_{p \in \Gamma(\oa{\zeta}^{-1},w^{-1})} 
(T_i^\vee)^{\ep} S_{e(p)^{-1}}^Y g_p(Y)n_p(Y).
\end{align}
Here $\ep \in \{\pm1\}$ is the sign determined by $z$.
Let us calculate the rightmost side. 
Take an element 
\[
 p = (w^{-1}A,\, w^{-1} s_{i_1}^{\ep_1} A, \, \ldots, \, 
      w^{-1} s_{i_1}^{\ep_1} \cdots s_{i_r}^{\ep_r} A) 
 \in \Gamma(\oa{\zeta}^{-1},w^{-1}).
\]
Since we have $(T_i^\vee)^{\pm1}=S_i^Y-\psi^{\pm}_i(Y^{-\alpha_i})$
by the definition \eqref{eq:Tiv} of $T_i^\vee$, 
the term contributed by $p$ becomes
\begin{align*}
(T_i^\vee)^{\ep}S_{e(p)^{-1}}^Y g_p(Y)n_p(Y)
&=(S_i^Y-\psi_i^{\ep}(Y^{-\alpha_i}))S_{e(p)^{-1}}^Y g_p(Y)n_p(Y) \\
&=S_i^Y S_{e(p)^{-1}}^Yg_p(Y)n_p(Y)
 +(-\psi_i^{\ep}(Y^{-\alpha_i}))S_{e(p)^{-1}}^Y g_p(Y)n_p(Y) \\
&=S_i^Y S_{e(p)^{-1}}^Y g_p(Y)n_p(Y)
 +S_{e(p)^{-1}}^Y(-\psi_i^{\ep}(Y^{-e(p)\alpha_i}))g_p(Y)n_p(Y).
\end{align*}
In the last equality we used \eqref{eq:SYl}.
We treat the two terms in the last line separately.

For the first term $S_i^Y S_{e(p)^{-1}}^Y g_p(Y)n_p(Y)$,
we further divide the argument into two cases according to the Bruhat order.
\begin{enumerate}[nosep,label=(\roman*)]
\item 
The case $e(p)^{-1} \bleq s_i e(p)^{-1}$.
By Proposition \ref{prp:SYiw}, we have
$S^Y_i S^Y_{e(p)^{-1}} = S^Y_{s_i e(p)^{-1}} = S_{e(p_1)^{-1}}$, 
where the alcove walk
\begin{align}\label{eq:xzSY:p_1}
 p_1  = (w^{-1}A,\, w^{-1} s_{i_1}^{\ep_1} A, \, \ldots, \, 
         w^{-1} s_{i_1}^{\ep_1} \cdots s_{i_r}^{\ep_r} A, \, 
         w^{-1} s_{i_1}^{\ep_1} \cdots s_{i_r}^{\ep_r} s_i^{\ep} A) 
 \in \Gamma(\oa{z},w^{-1})
\end{align}
is an extension of $p$ by a crossing (Table \ref{tab:sp}).
By the hypothesis $e(p)^{-1} \bleq s_i e(p)^{-1}$, the last step of $p_1$ is an ascent,
and we have 
$\varphi_+(p_1)=\varphi_+(p)$, $\varphi_-(p_1)=\varphi_-(p)$ and 
$\xi_{\des}(p_1)=\xi_{\des}(p)$. 
Thus we have $g_p(Y)n_p(Y)=g_{p_1}(Y)n_{p_1}(Y)$ 
and $S_i^Y S_{e(p)^{-1}}^Y g_p(Y)n_p(Y)=S_{e(p_1)^{-1}}g_{p_1}(Y)n_{p_1}(Y)$.

\item 
The case $e(p)^{-1} \bgeq s_i e(p)^{-1}$. 
By Propositions \ref{prp:SYiw}, we have
\[
 S_i^Y S_{e(p)^{-1}}^Y = n_i(Y^{-\alpha_i})S_{s_i e(p)^{-1}}^Y = 
 n_i(Y^{-\alpha_i})S_{e(p_1)^{-1}}^Y = S_{e(p_1)^{-1}}^Y n_i(Y^{-e(p_1)\alpha_i}).
\]
Here $p_1 \in \Gamma(\oa{z},w^{-1})$ is the same as \eqref{eq:xzSY:p_1},
but in this case the last step is a descent crossing,
and the hyperplane crossed by the last step is $H_{e(p_1)\alpha_i}$ since
\[
 h_{r+1}(p_1)=-e(p)(\alpha_i)=-e(p_1)s_i(\alpha_i)=e(p_1)\alpha_i.
\]
We then have $\xi_{\des}(p_1)=\xi_{\des}(p)\cup\{r+1\}$ and
$n_{p_1}(Y)=n_{p}(Y) n_{i}(Y^{-h_{r+1}(p_1)})$.
Combining them with $\varphi_+(p_1)=\varphi_+(p)$ and $\varphi_-(p_1)=\varphi_-(p)$,
we have $n_i(Y^{-e(p_1)\alpha_i}) g_p(Y)n_p(Y)=g_{p_1}(Y)n_{p_1}(Y)$.
Hence also in this case, we have
$S_i^Y S_{e(p)^{-1}}^Y g_p(Y)n_p(Y)=S_{e(p_1)^{-1}}g_{p_1}(Y)n_{p_1}(Y)$.
\end{enumerate} 
Taking the summation over $p$, we therefore have
\begin{align}\label{eq:xzSY:1}
 \sum_{p \in \Gamma(\oa{\zeta}^{-1},w^{-1})} S_i^Y S_{e(p)^{-1}}^Y g_p(Y)n_p(Y) = 
 \sum_{\substack{p_1 \in \Gamma(\oa{z}^{-1},w^{-1}), 
       \\ \text{the last step is a crossing}}} S_{e(p_1)^{-1}}^Y g_{p_1}(Y)n_{p_1}(Y).
\end{align}

Next we consider the term $S_{e(p)^{-1}}^Y(-\psi_i^{\ep}(Y^{-e(p)\alpha_i}))g_p(Y)n_p(Y)$.
We make a similar argument as in the first term,
and here we use the alcove walk $p_2 \in \Gamma(\oa{z},w^{-1})$ 
which is an extension of $p$ by a folding.
We have 
$e(p_2)=e(p)$, $\varphi_+(p_2)=\varphi_+(p) \cup \{r+1\}$ and
$\xi_{\des}(p_2)=\xi_{\des}(p)$.
Using $p_2$ we have
$S_{e(p)^{-1}}^Y(-\psi_i^{\ep}(Y^{-e(p)\alpha_i}))g_p(Y)n_p(Y) = 
 S_{e(p_2)^{-1}}^Y g_{p_2}(Y)n_{p_2}(Y)$.
We therefore have
\begin{align}\label{eq:xzSY:2}
 \sum_{p \in \Gamma(\oa{\zeta}^{-1},w^{-1})} 
 S_{e(p)^{-1}}^Y(-\psi_i^{\ep}(Y^{-e(p)\alpha_i}))g_p(Y)n_p(Y) = 
 \sum_{\substack{p_2 \in \Gamma(\oa{z}^{-1},w^{-1}), 
       \\ \text{the last step is a folding}}} 
 S_{e(p_2)^{-1}}^Y g_{p_2}(Y)n_{p_2}(Y).
\end{align}

By \eqref{eq:xzSY:0} and \eqref{eq:xzSY:1}, \eqref{eq:xzSY:2}, we have
$x^z S_w^Y=\sum_{p \in \Gamma(\oa{z}^{-1},w^{-1})} S_{e(p)^{-1}}^Y g_p(Y)n_p(Y)$.
Hence the induction step is proved.
\end{proof}

The definition \eqref{eq:xzT} of $x^z$ for $z \in W$ and 
the definition \eqref{eq:xlam} of $x^\mu$ for $\mu \in P = \frh_\bZ^*$
are consistent in the following sense.
Recall that we denote by $t(\mu) \in t(P) \subset W$ 
the element associated to $\mu \in P = \frh_\bZ^*$.

\begin{lem}\label{lem:xmu=xtmu}
We have $x^{t(\mu)}=x^\mu$ for $\mu \in P = \frh_\bZ^*$.
In particular, we have $x^{t(\ep_i)}=x_i$ for $i=1,\ldots,n$.
\end{lem}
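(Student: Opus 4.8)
The plan is to compare the two constructions directly on the generators of $t(P)$, namely the translations $t(\ep_i)$, and then extend multiplicatively. First I would unwind the definition \eqref{eq:xzT} of $x^z$ in the special case $z=t(\mu)$: one fixes a reduced expression, forms the alcove walk $q=(t(\mu)A,\ \ldots,\ A)\in\Gamma(\oa{t(\mu)}^{-1},t(\mu))$ consisting entirely of crossings (it is a straight walk, no foldings, since the expression is reduced), reads off the signs $\ep_k\in\{\pm1\}$ from Table \ref{tab:sp}, and gets $x^{t(\mu)}=(T_{i_r}^\vee)^{\ep_r}\cdots(T_{i_1}^\vee)^{\ep_1}$. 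The key observation is that this is exactly $\phi(Y^{t(\mu)})$ up to the ordering convention: applying the Cherednik anti-involution $\phi$ to the formula \eqref{Y:rel}--\eqref{eq:Yred} for $Y^{t(\mu)}$ and using $\phi(T_i)=T_i^\vee$ together with the fact that $\phi$ is an anti-homomorphism (so it reverses products) gives precisely the expression \eqref{eq:xzT}. In other words, I would first prove $x^{t(\mu)}=\phi(Y^{t(\mu)})$.

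Granting that, the statement reduces to checking $\phi(Y^{t(\ep_i)})=x_i$ for $i=1,\ldots,n$, because both $z\mapsto x^z$ restricted to translations and $\mu\mapsto x^\mu$ are multiplicative: $x^{t(\mu)}x^{t(\nu)}=x^{t(\mu+\nu)}$ follows from concatenating alcove walks (equivalently from $Y^{t(\mu)}Y^{t(\nu)}=Y^{t(\mu+\nu)}$ and applying $\phi$), and $x^\mu x^\nu = x^{\mu+\nu}$ is the definition \eqref{eq:xlam}. Actually the cleanest route is to bypass the generators: the Cherednik anti-involution satisfies $\phi(Y_i)=\phi(Y^{t(\ep_i)})=x_i^{-1}$... wait, that gives the wrong sign. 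Let me reconsider: from \eqref{eq:inv} we have $\phi(Y_i)=x_i^{-1}$, so $\phi(Y^{t(\ep_i)})=x_i^{-1}$, which would give $x^{t(\ep_i)}=x_i^{-1}$, not $x_i$. So the correct claim must be $x^{t(\mu)}=\phi(Y^{t(-\mu)})=\phi(Y^{t(\mu)})^{-1}$, i.e. there is an inversion hidden in the alcove-walk conventions — the walk $q$ runs from $t(\mu)A$ back to $A$, the reverse direction from the walk defining $Y^{t(\mu)}$, which runs from $A$ to $t(\mu)A$, and each crossing sign flips. So in fact I would prove $x^{t(\mu)}=\phi(Y^{t(\mu)})^{-1}=\phi(Y^{t(-\mu)})$; combined with $\phi(Y_i)=x_i^{-1}$ this gives $x^{t(\ep_i)}=\phi(Y_i^{-1})=x_i$ as desired.

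So concretely the steps are: (1) expand \eqref{eq:xzT} for $z=t(\mu)$ and identify the walk $q$ as the reversal of the standard straight walk from $A$ to $t(\mu)A$, so that $\ep_k$ are the negatives of the signs appearing in $Y^{t(\mu)}$; (2) use that $\phi$ reverses products and fixes each $T_i$ up to $T_i\mapsto T_i^\vee$ to conclude $x^{t(\mu)}=\phi(Y^{t(\mu)})^{-1}$; alternatively, and perhaps more transparently, verify directly for $\mu=\ep_i$ using the explicit reduced expression \eqref{eq:tep} of $t(\ep_i)$ and the corresponding formula \eqref{eq:Yred}, applying $\phi$ termwise; (3) invoke $\phi(Y_i)=x_i^{-1}$ from \eqref{eq:inv} to get $x^{t(\ep_i)}=x_i$; (4) extend to general $\mu\in P$ by multiplicativity $x^{t(\mu)}x^{t(\nu)}=x^{t(\mu+\nu)}$, which itself follows by concatenating the defining alcove walks (or from $Y^{t(\mu)}Y^{t(\nu)}=Y^{t(\mu+\nu)}$), matching $x^\mu x^\nu=x^{\mu+\nu}$.

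The main obstacle is purely bookkeeping: getting the orientation of the alcove walk $q$ and the resulting crossing signs $\ep_k$ to line up correctly with the conventions used for $Y^w$ in \eqref{Y:rel}, so that the promised equality comes out as $x_i$ rather than $x_i^{\pm1}$ or some $q$-shifted variant. Since $t(\ep_i)$ is not the longest coset representative and its reduced word \eqref{eq:tep} passes through $s_0$ exactly once, one must track exactly which step of $q$ crosses the affine hyperplane and check the sign rule of Figure \ref{fig:as} there; everything else consists of crossings of finite hyperplanes where $T_j^\vee=T_j$ and the signs are straightforward. I expect the cleanest writeup simply does the $\mu=\ep_i$ case explicitly via \eqref{eq:tep} and \eqref{eq:Yred}, notes $x^{t(\ep_i)}=\phi(Y^{t(\ep_i)})^{-1}$ follows from reversing the word, hence equals $\phi(Y_i^{-1})=x_i$, and then remarks that multiplicativity of both sides closes the general case.
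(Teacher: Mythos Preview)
Your proposal is correct and follows essentially the same route as the paper: reduce to the generators $\mu=\ep_i$, compute $x_i=\phi(Y_i^{-1})$ explicitly from \eqref{eq:Yred} via the anti-involution, and compare with the definition \eqref{eq:xzT} of $x^{t(\ep_i)}$. Your conceptual identity $x^{z}=\phi(Y^{z})^{-1}$ (obtained by reversing the walk and flipping crossing signs) is a clean way to package the comparison and to justify the multiplicative extension, though the paper simply writes out both expressions for $i=1,\ldots,n$ and matches them termwise.
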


\begin{proof}
It is enough to show the latter half.
By \eqref{eq:Yred}, we have
\[
 Y_i^{-1}= T_{i}^{-1} \cdots T_{n-1}^{-1} T_n^{-1} T_{n-1}^{-1}
                      \cdots T_1^{-1} T_0^{-1} T_1 \cdots T_{i-1}
 \quad(i=1,\dots,n).
\]
Applying the anti-involution $\phi$ \eqref{eq:inv} to these. 
\[
 x_i = \phi(Y_i^{-1})
     = T_{i-1}^{\vee} \cdots T_1^\vee (T_0^\vee)^{-1} (T_1^\vee)^{-1} \cdots 
      (T_{n-1}^\vee)^{-1} (T_n^\vee)^{-1} (T_{n-1}^\vee)^{-1} \cdots (T_i^\vee)^{-1}.
\]
On the other hand, we can calculate $x^{t(\ep_i)}$ 
directly by Definition \eqref{eq:xzT},
and can check $x_i=x^{t(\ep_i)}$.
\end{proof} 

We denote the \emph{dominant chamber} for the weight lattice by
\[
 C := \pr{x\in\frh_\bR^* \mid \br{\alpha^\vee,x}>0,\ \alpha\in R_+}.
\]
As for the fundamental alcove $A$ \eqref{eq:FA}, we have $A\subset C$.

Let $v, w \in W$, and choose a reduced expression $v=s_{i_1} \cdots s_{i_r}$ of $v$.
If an alcove walk $p \in \Gamma(\oa{v},w)$
satisfies $e(p)^{-1}A\subset C$, where $e(p) \in W$ is the element \eqref{eq:e(p)},
then using the $W$-valued function $w(\,)$ in \eqref{eq:w(mu)}, 
we define $\varpi(p) \in (\frh_\bZ^*)_+$ by the relation 
\begin{align}\label{eq:varpi}
 e(p)^{-1}=w(\varpi(p)).
\end{align} 
Also we define $\Gamma^{C}(\oa{v},w) \subset \Gamma(\oa{v},w)$ by 
\begin{align}\label{eq:GammaC}
 \Gamma^{C}(\oa{v},w):=
 \pr{p=(p_0,\ldots,p_r)\in\Gamma(\oa{v},w) \mid p_i \in C,\ \forall \, i=0,\ldots,r }.
\end{align}

Using these symbols, we have the following corollary 
of Proposition \ref{prp:xzSY}.

\begin{cor}[{c.f.\ \cite[Corollary 4.1]{Y}}]\label{cor:xmuE}
Let $\lambda,\mu\in\frh_\bZ^*$, and fix a reduced expression $t(\lambda)=s_{i_r}\cdots s_{i_1}$.
Then we have
\begin{align*}
&x^{\lambda}E_\mu(x) = 
 \sum_{p \in\Gamma^{C} (\oa{t(-\lambda)},w(\mu)^{-1})} g_p n_p E_{\varpi(p)}(x), \\
&g_p := \prod_{k \in \varphi_-(p)}
        \bigl(-\psi_{i_k}^-(q^{\sh(-h_k(p))}t^{\hgt(-h_k(p))})\bigr)
        \prod_{k \in \varphi_+(p)}
        \bigl(-\psi_{i_k}^+(q^{\sh(-h_k(p))}t^{\hgt(-h_k(p))})\bigr), \\
&n_p := \prod_{k \in \xi_{\des}(p)}n_{i_k}(q^{\sh(-h_k(p))}t^{\hgt(-h_k(p))}).
\end{align*}
\end{cor}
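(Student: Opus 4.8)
The plan is to deduce Corollary~\ref{cor:xmuE} from Proposition~\ref{prp:xzSY} by evaluating both sides on the constant function $1$ and translating the operator identity into a polynomial identity. First I would set $z := t(-\lambda)$ in Proposition~\ref{prp:xzSY}, so that $x^{t(-\lambda)} = x^{-\lambda}$ by Lemma~\ref{lem:xmu=xtmu}; however, we want $x^\lambda E_\mu(x)$, so I would instead apply the proposition with the reduced expression $t(\lambda) = s_{i_r}\cdots s_{i_1}$ as in the statement, giving $x^{t(\lambda)} = x^\lambda$, and with $w := w(\mu)$, so that $S^Y_w 1 = S^Y_{w(\mu)}1 = E_\mu(x)$ by Fact~\ref{fct:Emu}. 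Then Proposition~\ref{prp:xzSY} reads
\[
 x^\lambda E_\mu(x) = \sum_{p \in \Gamma(\oa{t(\lambda)}^{-1}, w(\mu)^{-1})} S^Y_{e(p)^{-1}} g_p(Y) n_p(Y)\, 1.
\]
Note $\oa{t(\lambda)}^{-1} = \oa{t(-\lambda)}$ since $t(\lambda)^{-1} = t(-\lambda)$, matching the index set in the corollary.

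The next step is to evaluate the right-hand side on $1$. Since $g_p(Y)$ and $n_p(Y)$ are Laurent polynomials (or rational functions) in the commuting operators $Y^\beta$, and $Y^\beta 1 = q^{\sh(\beta)} t^{\hgt(\beta)}$ by \eqref{eq:Ylam1}, acting by $g_p(Y) n_p(Y)$ on $1$ first produces the scalar $g_p n_p$ obtained by the substitution $Y^{-h_k(p)} \mapsto q^{\sh(-h_k(p))} t^{\hgt(-h_k(p))}$ in the formulas for $g_p(Y)$ and $n_p(Y)$ — this is exactly the definition of $g_p$, $n_p$ in the corollary. It remains to identify $S^Y_{e(p)^{-1}} 1$. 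Here I would use the definition $E_\nu(x) = S^Y_{w(\nu)} 1$ (Fact~\ref{fct:Emu}): if $e(p)^{-1} = w(\varpi(p))$ for some dominant $\varpi(p)$, then $S^Y_{e(p)^{-1}} 1 = E_{\varpi(p)}(x)$. This is precisely where the restriction to $\Gamma^C$ enters: the condition $p_i \in C$ for all $i$, in particular $e(p)^{-1} A = p_r \subset C$, guarantees via \eqref{eq:varpi} that $\varpi(p)$ is well-defined and dominant; for alcove walks leaving the dominant chamber, $e(p)^{-1}$ is not of the form $w(\nu)$ and the term must vanish or be reorganized.

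The main obstacle, then, is justifying that only the alcove walks in $\Gamma^C(\oa{t(-\lambda)}, w(\mu)^{-1})$ contribute, i.e.\ that the sum over all of $\Gamma(\oa{t(-\lambda)}, w(\mu)^{-1})$ collapses to the sum over $\Gamma^C$. I would argue this by a cancellation/telescoping mechanism analogous to Yip's \cite[Corollary 4.1]{Y}: walks that cross a wall of the dominant chamber come in pairs (one crossing, one folding at the offending hyperplane), and the intertwiner relation $S^Y_i S^Y_{w} = S^Y_{s_i w}$ when $w \bleq s_i w$ together with $S^Y_0 1$, $S^Y_n 1$, and the boundary intertwiner identities forces the non-dominant contributions to telescope away — concretely, one shows $S^Y_{v^{-1}} 1 = 0$ whenever $v^{-1}A \not\subset C$ is "one step past" a wall, using that $S^Y_i 1$ or the relevant $\psi^{\pm}_i$-term vanishes at the relevant specialization of $Y$. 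Alternatively, and perhaps more cleanly, one restricts attention from the start: since $x^\lambda E_\mu(x)$ is a genuine Laurent polynomial expanded in the basis $\{E_\nu\}_{\nu \in \frh_\bZ^*}$, and each $S^Y_{e(p)^{-1}}1$ that survives must equal some $E_\nu$, one matches coefficients and invokes the bijective parametrization of dominant-chamber alcove walks by their endpoints. I would follow whichever of these Yip uses in \cite[\S4]{Y} and transcribe it to the $(C_n^\vee, C_n)$ setting, checking that the three flavors of boundary intertwiner ($\psi_0$, $\psi_n$, and the finite $\psi_i$) all satisfy the vanishing needed — a routine but not entirely trivial verification given the extra parameters $t_0, t_n, u_0, u_n$.
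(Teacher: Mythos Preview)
Your setup is correct: apply Proposition~\ref{prp:xzSY} with $z=t(\lambda)$ and $w=w(\mu)$, act on $1$, and use \eqref{eq:Ylam1} to turn $g_p(Y)n_p(Y)$ into the scalars $g_p n_p$. The issue is your treatment of the restriction to $\Gamma^C$.

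The paper does \emph{not} use a pairing or telescoping argument. Its mechanism is the vanishing of the factor $n_{i_k}\bigl(q^{\sh(-h_k(p))}t^{\hgt(-h_k(p))}\bigr)$. Concretely: if a walk that starts in $C$ leaves $C$ and later \emph{returns} to $C$, then the return step is a crossing through a wall $H_{\alpha_j}$ ($j\in\{1,\dots,n\}$) from the non-$C$ side to the $C$ side; since $A\subset C$, this decreases length, so it is a descent crossing and hence lies in $\xi_{\des}(p)$. For such a step $h_k(p)=\pm\alpha_j$ has $\sh(-h_k(p))=0$, and one checks directly from the formulas in Proposition~\ref{prp:SYiw} that $n_{i_k}(t^{\hgt(\mp\alpha_j)})=0$ (e.g.\ $n_i(t^{\pm1})=0$ for $1\le i<n$, and $n_n((t_0t_n)^{\pm1})=0$). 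Thus $n_p=0$ for any walk that exits and re-enters $C$.

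Your suggestion that $S^Y_{e(p)^{-1}}1=0$ for walks ending outside $C$ is correct and is the complementary piece: if $e(p)A\not\subset C$ then $e(p)^{-1}$ has a reduced expression ending in some $s_j$ with $1\le j\le n$, so $S^Y_{e(p)^{-1}}1=S^Y_v S^Y_j 1$, and a direct computation gives $S^Y_j 1=t_j^{1/2}+\psi_j^+\bigl(t^{\hgt(-\alpha_j)}\bigr)=0$. But this alone does not handle walks that leave $C$ and come back --- those end in $C$, so $S^Y_{e(p)^{-1}}1=E_{\varpi(p)}\neq 0$, and it is the $n_p$-vanishing that kills them. So drop the telescoping idea and combine the two vanishing mechanisms: $n_p=0$ for walks that re-enter $C$, and $S^Y_{e(p)^{-1}}1=0$ for walks that end outside $C$.
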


\begin{proof}
We apply 
$x^z S_w^Y=\sum_{p \in \Gamma(\oa{z}^{-1},w^{-1})} S_{e(p)^{-1}}^Y g_p(Y)n_p(Y)$ 
in Proposition \ref{prp:xzSY} to $z=t(\lambda)$ and $w=w(\mu)$.
Since $x^{t(\lambda)}=x^{\lambda}$ by Lemma \ref{lem:xmu=xtmu}, we have
\[
 x^\lambda S_{w(\mu)}^Y =
 \sum_{p \in \Gamma(\oa{t(-\lambda)},w(\mu)^{-1})} S_{e(p)^{-1}}^Y g_p(Y)n_p(Y).
\]
Taking the product of each side with $1$ and  
using the definition of the non-symmetric Koornwinder polynomial $E_\mu(x)$
(Fact \ref{fct:Emu}) and the equality 
$Y^\beta 1 = q^{\sh(\beta)}t^{\hgt(\beta)}$ in \eqref{eq:Ylam1}, we have
\[
 x^\lambda E_{\mu}(x) =  
 \sum_{p \in \Gamma(\oa{t(-\lambda)},w(\mu)^{-1})} g_p n_p S_{e(p)^{-1}}^Y 1.
\]
Next we consider the condition under which
the factor $n_{i_k}(q^{\sh(-h_k(p))}t^{\hgt(-h_k(p))})$ in $n_p$ vanishes.
By the definition of the factor (Proposition \ref{prp:SYiw}), the condition is
$q^{\sh(-h_k(p))}t^{\hgt(-h_k(p))}=t^{\pm1}$ ($i_k=1,\ldots,n-1$)
and $q^{\sh(-h_k(p))}t^{\hgt(-h_k(p))}=t_0^{\pm1}t_n^{\pm1}$ ($i_k=n$).
Then by the definition \eqref{eq:hk(p)} of $h_k(p)$, the alcove walk $p$ 
that contributes to the summation is contained in the dominant chamber $C$.
Now the consequence follows from the definition of $E_\mu(x)$ and 
and that \eqref{eq:varpi} of $\varpi(p)$.
\end{proof}

\subsection{Some lemmas}

In this subsection we prepare some lemmas for the symmetrizer $U$ and 
the Koornwinder polynomials $P_\lambda(x)$, 
which are $(C^\vee_n,C_n)$-type analogue of \cite[Proposition 3.6]{Y}.  

\begin{lem}[{c.f.\ \cite[Proposition 3.6 (a)]{Y}}]\label{lem:U}
The symmetrizer $U$ \eqref{eq:U} has the following expression.
\begin{align}
&\nonumber
 U=\sum_{w\in W_0} S_w^Y 
   \prod_{\alpha \in \cL(w^{-1},w_0^{-1})}b(Y^{-\alpha}), \\
&\label{eq:U:b}
 b(Y^{-\alpha}) := 
 \begin{dcases}
 t^\hf \frac{1-t^{-1}Y^{-\alpha}}{1-Y^{-\alpha}} 
 & (\alpha \not\in W_0.\alpha_n) \\
 t_n^{\hf} \frac{(1+t_0^{ \hf}t_n^{-\hf}Y^{-\frac{\alpha}{2}})
                 (1-t_0^{-\hf}t_n^{-\hf}Y^{-\frac{\alpha}{2}})}
                {1-Y^{-\alpha}}
 & (\alpha \in W_0.\alpha_n)
 \end{dcases}.
\end{align}
Here $\cL(v,w) \subset S$ is given by \eqref{eq:cL(v,w)}, and 
$w_0 \in W_0$ is the longest element \eqref{eq:wmu}.
\end{lem}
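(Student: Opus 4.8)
The plan is to prove the identity by induction on the Bruhat length of $w_0$, peeling off one simple reflection at a time and tracking how the ``intertwiner $+$ rational correction'' decomposition $T_i^\vee = S_i^Y - \psi_i^\pm(Y^{-\alpha_i})$ propagates. Concretely, recall $U=\sum_{w\in W_0}t_{w_0w}^{-\hf}T_w$ and that $H(W_0)$ is spanned by the $T_w$; the aim is to re-expand $U$ in the basis of $Y$-intertwiners $\{S_w^Y\}_{w\in W_0}$ (this is legitimate inside $\wt{DH}(W)$, where the transition matrix between $\{T_w\}$ and $\{S_w^Y\}$ is triangular with respect to Bruhat order). First I would establish the single-step relation: writing $U$ via the recursion $U = \sum_{w\le w_0,\ w\,s_i > w}(T_w + t_i^{-\hf}T_{w}T_i)\cdot(\text{coeff})$ — or more cleanly using $UT_i = t_i^\hf U$ from \eqref{eq:UTUt} — and substituting $T_i = T_i^\vee$-type expressions is not quite right since $U$ lives in $H(W_0)$, not involving $T_0$; so instead I would work directly with $T_i$ for $i=1,\dots,n$ and use $S_i^x = T_i - d_i(x^{\alpha_i})$, then apply $\phi$. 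Actually the more direct route: prove the dual statement for the $x$-side symmetrizer and transport by the Cherednik anti-involution $\phi$, or prove it by induction mimicking \cite[Proposition 3.6(a)]{Y} verbatim with the type-$(C_n^\vee,C_n)$ values of $c_i^\vee, d_i^\vee$ inserted.

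The key steps, in order, are: (1) Fix a reduced word for $w_0$ and induct on $\ell(w_0)$ — or rather, induct on the ``remaining'' part, proving the more general claim that for any $u\in W_0$ one has $\sum_{w\le u} t_{uw\,?}\,T_w\cdot(\dots) = \sum_{w\le u} S_w^Y\prod_{\alpha\in\cL(w^{-1},u^{-1})}b(Y^{-\alpha})$, so that the induction has room to run. (2) For the inductive step write $u = s_i u'$ with $\ell(u)=\ell(u')+1$, split the sum over $w\le u$ into those $w\le u'$ and those of the form $s_i w'$, use Proposition \ref{prp:SYiw} (the product rule $S_i^Y S_w^Y = S_{s_iw}^Y$ or $n_i(Y^{-\alpha_i})S_{s_iw}^Y$ according to Bruhat order) together with the commutation $S_w^Y Y^\lambda = Y^{w\lambda}S_w^Y$ from \eqref{eq:SYl} to move the rational factors past the intertwiners. (3) Identify the resulting rational coefficient of $S_w^Y$: a term $b(Y^{-\alpha})$ enters or leaves exactly when $\alpha$ crosses from $\cL(w^{-1},u'^{-1})$ to $\cL(w^{-1},u^{-1})$, which is governed by the standard fact \eqref{eq:cL:trs} that $\cL(v,w)$ records the hyperplanes separating $vA$ and $wA$. (4) Check the base case $u=e$: both sides equal the identity. (5) Finally, verify the two explicit values of $b(Y^{-\alpha})$ in \eqref{eq:U:b} by a direct computation — for $\alpha\notin W_0.\alpha_n$ one gets the type-$A$-like factor $t^\hf(1-t^{-1}Y^{-\alpha})/(1-Y^{-\alpha})$, coming from $d_i^\vee$, while for $\alpha\in W_0.\alpha_n$ the factor acquires the extra $(C^\vee_n,C_n)$ parameters $t_0,t_n$ and becomes the stated product; this mirrors the split already visible in $c_i^\vee(z)$ in the proof of Proposition \ref{prp:SYiw}.

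I expect the main obstacle to be the bookkeeping in step (3): keeping precise track of \emph{which} affine root $\alpha$ (and on \emph{which} side, i.e.\ the sign and the half-$\delta$ shifts $\sh(-\alpha)$, $\hgt(-\alpha)$) the factor $b(Y^{-\alpha})$ is evaluated at, after it has been conjugated past a chain of intertwiners via \eqref{eq:SYl}. The subtlety specific to type $(C_n^\vee,C_n)$ — absent in Yip's untwisted setting — is that the orbit $W_0.\alpha_n$ must be treated separately throughout, and one must confirm that the case split $\alpha\in W_0.\alpha_n$ vs.\ $\alpha\notin W_0.\alpha_n$ is stable under the relevant $W_0$-action, which it is since $W_0.\alpha_n$ is a single $W_0$-orbit in $\wt R$. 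A secondary (routine but tedious) point is verifying the closed forms: that the product $c_i^\vee(Y^{-\alpha})c_i^\vee(Y^{\alpha})$-type combinations and the accumulated $d_i^\vee$ contributions collapse to exactly the two-case formula \eqref{eq:U:b}; since these are finite rational-function identities in the parameters, they can be checked directly and I would not belabor them. Once the inductive step and base case are in place, specializing the general $u$ to $u=w_0$ yields the stated identity.
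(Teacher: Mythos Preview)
Your overall strategy --- change basis from $\{T_w\}$ to $\{S_w^Y\}$ by an induction exploiting the intertwiner/correction decomposition and the commutation \eqref{eq:SYl} --- is correct, and the explicit identification of $b(Y^{-\alpha})$ in step (5) is exactly what is needed. However, the specific inductive scheme you propose differs from the paper's and, as stated, has a gap. In step (2) you want to prove a generalized identity for every $u\in W_0$ and split $\{w\le u\}$ into $\{w\le u'\}$ and $\{s_i w':w'\le u'\}$ when $u=s_i u'$; for general $u$ this split is neither disjoint nor exhaustive (the Bruhat interval below $u$ does not decompose this way), so the induction does not run cleanly. One can salvage this route by restricting to parabolic subgroups rather than arbitrary $u$ (as in Corollary \ref{cor:Umu}), but then you would be proving the corollary rather than using a more general statement to get the lemma.

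The paper's argument avoids this entirely: it works with the \emph{full} symmetrizer $U$ throughout and does a \emph{downward} induction on $\ell(w)$ for the coefficients $b_w(Y)$ in the expansion $U=\sum_{w\in W_0}S_w^Y b_w(Y)$. The base case is $b_{w_0}(Y)=1$ (read off from the coefficient of $T_{w_0}$). For the inductive step, given $w\ne w_0$, choose $i$ with $w\bleq v:=ws_i$; substitute $T_i=S_i^Y-\psi_i^+(Y^{-\alpha_i})$ into the identity $UT_i=Ut_i^{\hf}$ from \eqref{eq:UTUt}, then compare coefficients of $S_w^Y$ on both sides using \eqref{eq:SYl} and Corollary \ref{cor:SYwi}. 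This gives $b_w(Y)=b_v(s_i.Y)\bigl(t_i^{\hf}+\psi_i^+(Y^{\alpha_i})\bigr)=b_v(s_i.Y)\,b(Y^{-\alpha_i})$, and the set identity $\cL(w^{-1},w_0^{-1})=s_i.\cL(v^{-1},w_0^{-1})\sqcup\{\alpha_i\}$ from \eqref{eq:cL:trs} closes the induction. This is shorter than your plan because it never needs a partial symmetrizer.

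A minor point: your worry about $T_i$ versus $T_i^\vee$ is a red herring. For $i=1,\dots,n$ the anti-involution $\phi$ fixes $T_i$, so $T_i^\vee=T_i$ and $S_i^Y=T_i+\psi_i^+(Y^{-\alpha_i})$ directly; only $T_0^\vee\ne T_0$, but $T_0$ never appears in $U\in H(W_0)$.
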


\begin{proof}
By the definition of $U$ and the definition \eqref{eq:SYw} of the $Y$-intertwiner $S_w^Y$, 
we can expand $U$ as 
\[
 U = \sum_{w\in W_0}S_w^Y b_w(Y), \quad b_w(Y) \in \bK(Y).
\]
For the longest element $w_0 \in W_0$, 
the coefficient of $T_{w_0}$ in $U$ is $1$, and thus we have $b_{w_0}(Y)=1$. 
We calculate the term $b_w(Y)$ for $w \in W_0 \setminus \{w_0\}$ 
by induction on the length $\ell(w)$.
Assume $b_v(Y)=\prod_{\alpha \in \cL(v^{-1},w_0^{-1})}b(Y^{-\alpha})$
for any element $v \in W_0$ satisfying $\ell(v)>\ell(w)$.
By the equality $U T_i=U t_i^\hf$ ($i=1,\dots,n$) in \eqref{eq:UTUt} and 
the definition \eqref{eq:SYi} of $S^Y_i$, 
we have 
\begin{align}\label{eq:U:rec}
  \sum_{w\in W_0}S_w^Yb_w(Y)t_i^\hf = U t_i^\hf = U T_i
= \sum_{w\in W_0}S_w^Yb_w(Y)T_i 
= \sum_{w\in W_0}S_w^Yb_w(Y)(S_i^Y-\psi^+_i(Y^{-\alpha_i})).
\end{align}
Now note that for $w \neq w_0$ there exists an index $i=1,\ldots,n$ 
such that $w \bleq v:=w s_i$. 
Taking this index $i$ and comparing the coefficients of $S^Y_w$ in the equality 
\eqref{eq:U:rec} with the help of \eqref{eq:SYl} and Corollary \ref{cor:SYwi},
we have $b_v(Y) t_i^{\hf} = b_w(s_i.Y) - b_v(Y) \psi_i^+(Y^{-\alpha_i})$.
Here $b_w(s_i.Y)$ is obtained from $b_w(Y)$ by replacing 
$Y^\lambda$ with $Y^{s_i.\lambda}$.
Then by the definition \eqref{eq:psi(z)} of $\psi^+_i(z)$ we have
\begin{align*}
  b_w(Y) /b_v(s_i.Y)
&=t_i^{\hf}+\psi_i^+(Y^{-s_i.\alpha_i}) = t_i^{\hf}+\psi_i^+(Y^{\alpha_i}) \\
&=\begin{cases}
   t_i^\hf \frac{1-t^{-1}Y^{-\alpha_i}}{1-Y^{-\alpha_i}} & (0<i<n) \\
   t_n^\hf 
   \frac{(1+t_0^{ \hf}t_n^{-\hf}Y^{-\frac{\alpha_n}{2}})
         (1-t_0^{-\hf}t_n^{-\hf}Y^{-\frac{\alpha_n}{2}})}{1-Y^{-\alpha_n}} & (i=n)
  \end{cases},
\end{align*}
so that it is equal to $b(Y^{-\alpha_i})$.
On the other hand, by \eqref{eq:cL:trs} we have 
$\cL(w^{-1},w_0^{-1})=s_i.\cL(v^{-1},w_0^{-1}) \sqcup \{\alpha_i\}$.
Thus we have $b_w(Y) = b_v(s_i.Y) b(Y^{-\alpha_i})
=\prod_{\alpha \in \cL(w^{-1},w_0^{-1})}b(Y^{-\alpha})$.

\end{proof}

We can apply the argument of the proof to the stabilizer $W_\mu \subset W_0$ 
for a dominant weight $\mu \in (\frh_\bZ^*)_+$ instead of $W_0$. 
As a result, we have the following claim. 

\begin{cor}\label{cor:Umu}
For each $\mu \in (\frh_\bZ^*)_+$, we have 
\[
 \sum_{u \in W_\mu}t_{w_\mu u}^{-\hf}T_u = 
 \sum_{w \in W_\mu} S_w^Y \prod_{\alpha \in \cL(w^{-1},w_\mu^{-1})}b(Y^{-\alpha}).
\]
Here $b(Y^{-\alpha}) \in \bK(Y)$ is given by \eqref{eq:U:b}.
\end{cor}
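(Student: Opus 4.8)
The plan is to replay the proof of Lemma \ref{lem:U} line by line, with the finite Weyl group $W_0$ replaced by the stabilizer $W_\mu$ of \eqref{eq:W_mu}, the longest element $w_0$ replaced by the longest element $w_\mu$ of $W_\mu$ from \eqref{eq:wmu}, and the symmetrizer $U$ replaced by the relative symmetrizer $U_\mu := \sum_{u \in W_\mu} t_{w_\mu u}^{-\hf} T_u$, which is exactly the left-hand side of the asserted identity. The one structural input is that $W_\mu$ is a \emph{standard} parabolic subgroup of $W_0$, i.e.\ it is generated by the simple reflections $s_i$ fixing $\mu$ --- the standard fact that the stabilizer of a dominant weight is generated by the simple reflections it contains. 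Write $I_\mu \subseteq \{1,\ldots,n\}$ for the corresponding index set, so $W_\mu = \langle s_i \mid i \in I_\mu \rangle$ has unique longest element $w_\mu$ with $\ell(w_\mu u) = \ell(w_\mu) - \ell(u)$ for $u \in W_\mu$, and with Bruhat order and length agreeing with those inherited from $W_0$ (hence from $W$); in particular a reduced word in $\{s_i\}_{i \in I_\mu}$ is reduced in $W$, so all the sets $\cL(\,\cdot\,)$ and $\cL(\,\cdot\,,\,\cdot\,)$ for elements of $W_\mu$ may be computed inside $W_\mu$.

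The first step is to record the relation $U_\mu T_i = U_\mu t_i^{\hf}$ for $i \in I_\mu$. Since $U_\mu$ is, up to relabelling, the symmetrizer attached to the parabolic Hecke subalgebra $H(W_\mu) \subseteq H(W_0)$ generated by $\{T_i\}_{i \in I_\mu}$, with $w_\mu$ playing the role of the longest element, the computation of \cite[(5.5.9)]{M} that produces \eqref{eq:UTUt} applies verbatim inside $H(W_\mu)$ and gives this relation. Next, expand $U_\mu = \sum_{w \in W_\mu} S_w^Y\, b_w^\mu(Y)$ with $b_w^\mu(Y) \in \bK(Y)$; this is legitimate because for $i \in I_\mu \subseteq \{1,\ldots,n\}$ we have $S_i^Y = T_i + \psi_i^+(Y^{-\alpha_i})$ by \eqref{eq:SYi}, so the change of basis between $\{T_u\}_{u \in W_\mu}$ and $\{S_u^Y\}_{u \in W_\mu}$ is unitriangular over $\bK(Y)$, exactly as in the proof of Lemma \ref{lem:U}.

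We then determine $b_w^\mu(Y)$ by downward induction on $\ell(w)$ starting from $w = w_\mu$. The coefficient of $T_{w_\mu}$ in $U_\mu$ is $t_{w_\mu w_\mu}^{-\hf} = t_e^{-\hf} = 1$, so $b_{w_\mu}^\mu(Y) = 1$, matching the empty product $\prod_{\alpha \in \cL(w_\mu^{-1}, w_\mu^{-1})} b(Y^{-\alpha})$. For $w \in W_\mu$ with $w \ne w_\mu$, pick $i \in I_\mu$ with $w \bleq v := w s_i$ in $W_\mu$ (possible since $w$ is not the longest element of $W_\mu$), apply $U_\mu T_i = U_\mu t_i^{\hf}$, and compare the coefficients of $S_w^Y$ using \eqref{eq:SYl} and Corollary \ref{cor:SYwi} just as in Lemma \ref{lem:U}; this yields $b_w^\mu(Y) = b_v^\mu(s_i.Y)\, b(Y^{-\alpha_i})$, where $b(Y^{-\alpha_i}) = t_i^{\hf} + \psi_i^+(Y^{\alpha_i})$ reproduces \eqref{eq:U:b} (the subcases $i < n$ and $i = n$ being precisely the two cases of \eqref{eq:U:b}). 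Combining this with $\cL(w^{-1}, w_\mu^{-1}) = s_i.\cL(v^{-1}, w_\mu^{-1}) \sqcup \{\alpha_i\}$, obtained from \eqref{eq:cL:trs} as in the proof of Lemma \ref{lem:U}, closes the induction and gives $b_w^\mu(Y) = \prod_{\alpha \in \cL(w^{-1}, w_\mu^{-1})} b(Y^{-\alpha})$, which is the claim.

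The only point that needs care --- the ``obstacle'', though a mild one --- is to confirm that every structural fact about $W_0$ used in Lemma \ref{lem:U} transfers to $W_\mu$: that $W_\mu$ is generated by simple reflections, that $w_\mu$ is the unique maximal element with the above length-additivity, that Bruhat order and the sets $\cL(\,\cdot\,,\,\cdot\,)$ for elements of $W_\mu$ coincide with their computations inside the parabolic (and hence inside $W$), and that $U_\mu$ satisfies the intertwining relation $U_\mu T_i = U_\mu t_i^{\hf}$ for $i \in I_\mu$. All of these are standard facts about standard parabolic subgroups of Coxeter groups, so once they are in place no new calculation is required: the inductive identification of $b_w^\mu(Y)$ is word-for-word the one carried out in the proof of Lemma \ref{lem:U}.
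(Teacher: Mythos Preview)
Your proposal is correct and matches the paper's approach exactly: the paper simply remarks that the proof of Lemma~\ref{lem:U} applies verbatim with $W_0$ replaced by the parabolic subgroup $W_\mu$ and $w_0$ by $w_\mu$, and you have written out precisely that replacement argument in detail, including the standard parabolic facts that make the transfer legitimate.
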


For a dominant weight $\mu \in (\frh_\bZ^*)_+$, we denote by
\begin{align}\label{eq:W^mu}
 W^\mu \subset W_0
\end{align}
the complete system of representatives of the quotient set $W_0/W_\mu$ 
consisting of the shortest elements.
We also denote by $v_\mu \in W^\mu$ its longest element.

Now let us recall the element $w(\mu) \in t(\mu) W_0 \subset W$ in the \eqref{eq:w(mu)}.
We then have the following lemma for the Koornwinder polynomial $P_\mu(x)$ (Fact \ref{fct:Pmu})
and the non-symmetric Koornwinder polynomial $E_\mu(x)$ (Fact \ref{fct:Emu}).

\begin{lem}[{c.f.\ \cite[Proposition 3.6 (b)]{Y}}]\label{lem:PmuEvmu}
For $\lambda \in (\frh_\bZ^*)_+$ we have
\begin{align*}
&P_\lambda(x) = \sum_{v \in W^\lambda}
 \Bigl[\prod_{\alpha \in w(\lambda)^{-1}\cL(v^{-1},v_\lambda^{-1})} \rho(\alpha)\Bigr]
 E_{v.\lambda}(x), \\
&\rho(\alpha) := 
 \begin{dcases}
  t^\hf 
  \frac{1-t^{-1}q^{\sh(-\alpha)}t^{\hgt(-\alpha)}}
       {1-q^{\sh(-\alpha)}t^{\hgt(-\alpha)}} & (\alpha \not\in W.\alpha_n) \\
  t_n^\hf
  \frac{(1+t_0^{\hf}t_n^{-\hf}
        q^{\hf\sh(-\alpha)}t^{\hf\hgt(-\alpha)})
        (1-t_0^{-\hf}t_n^{-\hf}
        q^{\hf\sh(-\alpha)} t^{\hf\hgt(-\alpha)})}
       {1-q^{\sh(-\alpha)}t^{\hgt(-\alpha)}} & (\alpha \in W.\alpha_n)
 \end{dcases},
\end{align*}
where $\sh(\beta)$ and $\hgt(\beta)$ for $\beta \in S$ are given by \eqref{eq:shhgt}.
\end{lem}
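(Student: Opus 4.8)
The plan is to start from the formula for $P_\lambda(x)$ in Fact \ref{fct:Pmu}, namely $P_\lambda(x) = \tfrac{1}{t_{w_\lambda}^{-\hf}W_\lambda(t)} U S_{w(\lambda)}^Y 1$, and to evaluate the operator $U S_{w(\lambda)}^Y$ applied to $1$ by inserting the expansion of the symmetrizer $U$ from Lemma \ref{lem:U}. Writing $U = \sum_{w \in W_0} S_w^Y \prod_{\alpha \in \cL(w^{-1},w_0^{-1})} b(Y^{-\alpha})$, we get $U S_{w(\lambda)}^Y 1 = \sum_{w \in W_0} S_w^Y \bigl(\prod_{\alpha \in \cL(w^{-1},w_0^{-1})} b(Y^{-\alpha})\bigr) S_{w(\lambda)}^Y 1$. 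Since $b(Y^{-\alpha})$ is a rational function of the $Y$'s, we can push it past $S_{w(\lambda)}^Y$ using the intertwining relation \eqref{eq:SYl}: this replaces $Y^\alpha$ by $Y^{w(\lambda)^{-1}.\alpha}$ (extended multiplicatively to rational functions), and produces $S_w^Y S_{w(\lambda)}^Y$ times a rational function in $Y$ whose argument involves $w(\lambda)^{-1}\cL(w^{-1},w_0^{-1})$. The key point is that when this rational function is finally applied to $1$, every $Y^\beta 1$ collapses to $q^{\sh(\beta)}t^{\hgt(\beta)}$ by \eqref{eq:Ylam1}, turning $b(Y^{-\alpha})$ into the scalar $\rho(\alpha)$ as defined in the statement (direct comparison of \eqref{eq:U:b} with the stated $\rho$, after the substitution, gives exactly this).

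The next step is to handle $S_w^Y S_{w(\lambda)}^Y 1$ and reorganize the sum over $W_0$ into a sum over $W^\lambda$. First I would note that $S_{w(\lambda)}^Y 1 = E_\lambda(x)$ by Fact \ref{fct:Emu}, and more generally, by repeatedly applying Proposition \ref{prp:SYiw} one sees that $S_w^Y S_{w(\lambda)}^Y$ either equals $S_{w\,w(\lambda)}^Y$ (when lengths add) or picks up extra $n_i$-factors. The cleaner route, which mirrors Yip's proof of \cite[Proposition 3.6(b)]{Y}, is to use that the cosets $W_0/W_\lambda$ are represented by $W^\lambda$, and that for $v \in W^\lambda$ and $u \in W_\lambda$ the product $v u$ has $\ell(vu) = \ell(v)+\ell(u)$; the stabilizer part $W_\lambda$ is exactly what is absorbed by the normalizing factor $t_{w_\lambda}^{-\hf}W_\lambda(t)$ via Corollary \ref{cor:Umu} (which computes $\sum_{u \in W_\lambda} t_{w_\lambda u}^{-\hf} T_u$ in terms of the $S_u^Y$'s and the same $b$-factors). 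So I would split the sum $\sum_{w \in W_0}$ as $\sum_{v \in W^\lambda} \sum_{u \in W_\lambda}$ with $w = v u$, recognize the inner $u$-sum together with the $b$-factors indexed by $\cL(u^{-1}, \cdot)$ as the quantity computed by Corollary \ref{cor:Umu}, which contributes precisely the scalar $t_{w_\lambda}^{-\hf}W_\lambda(t)$, and is thereby cancelled by the prefactor in Fact \ref{fct:Pmu}. What survives is $\sum_{v \in W^\lambda} S_v^Y \bigl(\prod_{\alpha} \rho(\cdots)\bigr) E_\lambda(x)$; since $v \in W^\lambda$ is shortest in its coset, $S_v^Y S_{w(\lambda)}^Y = S_{v w(\lambda)}^Y$ with no $n_i$-corrections, and $v w(\lambda) = w(v.\lambda)$ because $v.\lambda$ is obtained from the dominant $\lambda$ by a shortest-coset element, so $S_v^Y E_\lambda(x) = E_{v.\lambda}(x)$.

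Finally I would pin down the index set of the product defining $\rho$. The $b$-factors in $U$ are indexed by $\alpha \in \cL(w^{-1}, w_0^{-1}) = \cL(u^{-1}v^{-1}, w_0^{-1})$; using the cocycle/transitivity identity \eqref{eq:cL:trs} for $\cL$ and the splitting $w_0 = v_\lambda\, (\text{longest of } W_\lambda)$ with $v_\lambda \in W^\lambda$, one separates $\cL(w^{-1},w_0^{-1})$ into the part contributing to the $W_\lambda$-sum (absorbed by Corollary \ref{cor:Umu}) and the part $v.\cL(v^{-1}, v_\lambda^{-1})$ associated to the coset representative; after pushing through $S_{w(\lambda)}^Y$ via \eqref{eq:SYl} this becomes $w(\lambda)^{-1}\cL(v^{-1}, v_\lambda^{-1})$, which is exactly the index set in the statement. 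The main obstacle, I expect, is this bookkeeping with $\cL$: carefully verifying that the symmetric-difference set $\cL(w^{-1},w_0^{-1})$ splits compatibly along the coset decomposition $W_0 = W^\lambda W_\lambda$ and that the $W_\lambda$-part matches Corollary \ref{cor:Umu} on the nose (including the arguments of the $b$-factors after the $Y \mapsto s_i.Y$ substitutions), so that the normalization cancels cleanly and only the $\rho$-factors over $w(\lambda)^{-1}\cL(v^{-1},v_\lambda^{-1})$ remain. Everything else — the $Y^\beta 1 = q^{\sh(\beta)}t^{\hgt(\beta)}$ evaluation turning $b$ into $\rho$, and $S_v^Y E_\lambda = E_{v.\lambda}$ — is a direct consequence of the cited facts.
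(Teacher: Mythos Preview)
Your proposal is correct and follows essentially the same route as the paper's proof: expand $U$ via Lemma \ref{lem:U}, use the coset decomposition $W_0 = W^\lambda W_\lambda$ together with Corollary \ref{cor:Umu} so that the $W_\lambda$-part produces the scalar $t_{w_\lambda}^{-\hf}W_\lambda(t)$ cancelling the prefactor, push the remaining $b$-factors past $S_{w(\lambda)}^Y$ with \eqref{eq:SYl}, evaluate on $1$ via \eqref{eq:Ylam1} to turn $b$ into $\rho$, and use $v\,w(\lambda)=w(v.\lambda)$ to get $E_{v.\lambda}$. The only cosmetic difference is that the paper factors $U$ as the product $\bigl[\sum_{v\in W^\lambda}S_v^Y b_{(v^{-1},v_\lambda^{-1})}(Y)\bigr]\bigl[\sum_{u\in W_\lambda}t_{w_\lambda u}^{-\hf}T_u\bigr]$ \emph{before} applying it to $S_{w(\lambda)}^Y 1$, which packages the $\cL$-bookkeeping you flag as the main obstacle into a clean operator identity rather than tracking it term by term.
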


\begin{proof}
We write Lemma \ref{lem:U} as 
\begin{align*}
 U=\sum_{w\in W_0}S_w^Y b_{(w^{-1},w_0^{-1})}(Y), \quad
 b_{(w^{-1},w_0^{-1})}(Y):=\prod_{\alpha \in \cL(w^{-1},w_0^{-1})}b(Y^{-\alpha}).
\end{align*}
Since $W^\lambda$ consists of representatives of $W_0/W_\lambda$,
there exist $v \in W^\lambda$ and $u \in W_\lambda$ uniquely such that $w=v u$.
Using Corollary \ref{cor:Umu}, we have 
\begin{align*}
U&=\sum_{w \in W_0}S_w^Y b_{(w^{-1},w_0^{-1})}(Y)
  =\Bigl[\sum_{v \in W^\lambda}S_v^Y b_{(v^{-1},v_\lambda^{-1})}(Y)\Bigr]
   \Bigl[\sum_{u \in W_\lambda}S_u^Y b_{(u^{-1},w_\lambda^{-1})}(Y)\Bigr] \\
 &=\Bigl[\sum_{v \in W^\lambda}S_v^Y b_{(v^{-1},v_\lambda^{-1})}(Y)\Bigr]
   \Bigl[\sum_{u \in W_\lambda}t_{w_\lambda u}^{-\hf}T_u\Bigr].
\end{align*}
The product with $S_{w(\lambda)}^Y 1$ gives 
\begin{align}\label{eq:PE:US1}
  U S_{w(\lambda)}^Y 1
&=\Bigl[\sum_{v\in W^\lambda}S_v^Y b_{(v^{-1},v_\lambda^{-1})}(Y)\Bigr]
  \Bigl[\sum_{u\in W_\lambda}t_{w_\lambda u}^{-\hf}T_u\Bigr] S_{w(\lambda)}^Y 1
 =t_{w_\lambda}^{-\hf}W_\lambda(t) 
  \sum_{v\in W^\lambda}S_v^Y b_{(v^{-1},v_\lambda^{-1})}(Y) S_{w(\lambda)}^Y 1,
\end{align}
where in the second equality we used the Poincar\'e polynomial \eqref{eq:Wmu(t)} 
and the relation $(T_u f)1=t_u^\hf f$ for $u\in W_0$ and $f \in \bK[x^{\pm1}]$ 
satisfying $u(f)=f$.
The latter relation is shown as follows.
If $s_i f=f$ for some $i=1,\dots,n$, then we have 
$(T_i-t_i^\hf)f=c_i(x^{\alpha_i})(s_i-1)f=0$, and so $T_i f=t_i^\hf f$. 
Now the relation follows by induction on the length of $u \in W_0$.

Let us continue the calculation \eqref{eq:PE:US1}.
Note that we have $v w(\lambda)=w(v.\lambda)$ for $v \in W^\lambda$.
By this relation and \eqref{eq:SYl}, 
each term in the right hand side of \eqref{eq:PE:US1} becomes
\begin{align*}
  S_v^Y b_{(v^{-1},v_\lambda^{-1})}(Y) S_{w(\lambda)}^Y 1
&=S_v^Y S_{w(\lambda)}^Y b_{(v^{-1},v_\lambda^{-1})}(w(\lambda)^{-1}.Y) 1
 =\bigl(S_{w(v.\lambda)}^Y 1\bigr)\bigl(b_{(v^{-1},v_\lambda^{-1})}(w(\lambda)^{-1}.Y) 1\bigr).
\end{align*}
Here $b_{(v^{-1},v_\lambda^{-1})}(w(\lambda)^{-1}.Y)$ is obtained from 
$b_{(v^{-1},v_\lambda^{-1})}(Y)$ by replacing $Y^\mu$ with $Y^{w(\lambda)^{-1}.\mu}$.
Now let us recall the equality 
$Y^\alpha 1 = q^{\sh(\alpha)} t^{\hgt(\alpha)}$ in \eqref{eq:Ylam1}.
Then we have $b(Y^{-\alpha})1=\rho(\alpha)$, and therefore 
\[
 b_{(v^{-1},v_\lambda^{-1})}(w(\lambda)^{-1}.Y) 1 
=\prod_{\alpha \in \cL(v^{-1},v_\lambda^{-1})} \bigl(b(Y^{-w(\lambda)^{-1}.\alpha})1\bigr)
=\prod_{\alpha \in w(\lambda)^{-1}\cL(v^{-1},v_\lambda^{-1})} \rho(\alpha).
\]

By summing over $v \in W^\lambda$ we have
\begin{align*}
 U S_{w(\lambda)}^Y 1
=t_{w_\lambda}^{-\hf} W_\lambda(t) \sum_{v\in W^\lambda} 
 \Bigl[\prod_{\alpha \in w(\lambda)^{-1}\cL(v^{-1},v_\lambda^{-1})} \rho(\alpha)\Bigr] 
 E_{v.\lambda}(x),
\end{align*}	
Now the result follows from the definition of $P_\lambda(x)$ (Fact \ref{fct:Pmu}).
\end{proof}

\subsection{Ram-Yip type formula and its application}

In \cite[Theorem 4.2]{Y}, Yip derived an expansion formula 
$E_\mu(x)P_\lambda(x)=\sum_\nu a_{\lambda,\mu}^\nu E_\nu(x)$ 
for the product of the non-symmetric Macdonald polynomial $E_\mu(x)$ and 
the Macdonald polynomial $P_\lambda(x)$ in the case of untwisted affine root systems.
In this subsection, we give its $(C^\vee_n,C_n)$-type analogue, i.e., 
an expansion formula for the product of the non-symmetric Koornwinder polynomial and 
the Koornwinder polynomial (Proposition \ref{prp:EPE}).
 
As a preparation, we cite the explicit formula of the non-symmetric Koornwinder polynomial
via alcove walks derived by Orr and Shimozono \cite{OS}.
It is a $(C^\vee_n,C_n)$-analogue of the explicit formula 
of the non-symmetric Macdonald polynomial in the untwisted affine root systems
derived by Ram and Yip \cite{RY}.
Let us call these formulas \emph{Ram-Yip type formulas}. 

We prepare the necessary notations for the explanation.
Let us given $v, w \in W$ and a reduced expression of $w$.
For an alcove walk $p \in \Gamma(\oa{w},z)$,
we denote the decomposition of the element $e(p) \in W$ \eqref{eq:e(p)}
with respect to the presentation $W=t(P)\rtimes W_0$ by
\begin{align}\label{eq:wtdr}
 e(p) = t(\wgt(p)) \dir(p), \quad \dir(p) \in W_0, \ \wgt(p) \in \frh_\bZ^*.
\end{align}

\begin{fct}[{\cite[Theorem 3.1]{RY}, \cite[Theorem 3.13]{OS}}]\label{fct:RY}
For $\mu \in \frh_\bZ^*$, 
let $w(\mu)$ be the shortest element among $t(\mu)W_0 \subset W$ \eqref{eq:w(mu)},
and fix its reduced expression $w(\mu)=s_{i_1}\cdots s_{i_r}$.
Then we have
\begin{align*}
&E_\mu(x)=\sum_{p\in\Gamma(\oa{w(\mu)},e)} f_p
 t_{\dir(p)}^{\hf}x^{\wgt(p)}, \\
&f_p  := 
 \prod_{k\in\varphi_+(p)}\psi_{i_k}^{+}(q^{\sh(-\beta_k)} t^{\hgt(-\beta_k))})
 \prod_{k\in\varphi_-(p)}\psi_{i_k}^{-}(q^{\sh(-\beta_k))} t^{\hgt(-\beta_k)}),
\end{align*}
where we set $\beta_k := s_{i_r}\cdots s_{i_{k+1}}(\alpha_{i_r})$ for $k=1,\dots,r$.
\end{fct}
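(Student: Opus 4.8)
The statement is the $(C^\vee_n,C_n)$-analogue of the Ram--Yip formula, and the most economical proof is to quote \cite[Theorem 3.13]{OS} after checking that our conventions agree with those of \cite{OS}: the element $E_\mu=S^Y_{w(\mu)}1$ of Fact \ref{fct:Emu} differs from the polynomial of \cite{Sa,S} only by a scalar, and the quantities $\psi^\pm_i$ of \eqref{eq:psi(z)}, the scalars $q^{\sh}$ and $t^{\hgt}$ of \eqref{eq:shhgt}, and the monomials $t_{\dir(p)}^{\hf}x^{\wgt(p)}$ appearing above are literal transcriptions of the corresponding objects of \cite{OS}. I will instead sketch a direct derivation modelled on \cite[\S3]{RY}.

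The plan is to compute $E_\mu=S^Y_{i_1}\cdots S^Y_{i_r}\,1$ by expanding the product of intertwiners. Using $S^Y_i=T^\vee_i+\psi^+_i(Y^{-\alpha_i})$ for $i\geq1$ and $S^Y_0=T^\vee_0+\psi^+_0(qY_1^2)$, with $T^\vee_i=\phi(T_i)$ and $\phi(T_0)=T^{-1}_{s_{2\ep_1}}x_1^{-1}$ as in \eqref{eq:inv}, one selects at each position $k=1,\dots,r$ either the Hecke summand $T^\vee_{i_k}$ or the intertwining rational summand, and then repeatedly transports every rational function of $Y$ produced this way to the right, past the intervening operators, using \eqref{eq:SYl} and the $\phi$-image of the Lusztig relations (Fact \ref{fct:Lus}); once fully to the right it acts on $1$ through the explicit scalar $Y^\beta 1=q^{\sh(\beta)}t^{\hgt(\beta)}$ of \eqref{eq:Ylam1}. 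The combinatorial content of these manipulations is a bijection between the terms produced and the alcove walks $p\in\Gamma(\oa{w(\mu)},e)$: transporting the rational function generated at position $k$ past positions $k+1,\dots,r$ turns its argument into $Y^{-\beta_k}$ with $\beta_k=s_{i_r}\cdots s_{i_{k+1}}(\alpha_{i_k})$, the folding steps of $p$ thus pick up the scalars $\psi^\pm_{i_k}(q^{\sh(-\beta_k)}t^{\hgt(-\beta_k)})$ that assemble into $f_p$, and the remaining ``crossing'' part assembles, via the reduced expressions \eqref{eq:tep} and the commutation rules behind \eqref{eq:Yred}, into the monomial $x^{\wgt(p)}$ with coefficient $t_{\dir(p)}^{\hf}$, where $e(p)=t(\wgt(p))\dir(p)$ records the end of $p$.

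I expect the heart of the argument to be exactly this bookkeeping: tracking how each rational function of $Y$ is conjugated by the intervening reflections as it is moved rightward, verifying that its argument comes out precisely as $Y^{-\beta_k}$, and checking that all apparent poles $1-Y^{-\alpha}\to0$ cancel so that the resulting sum genuinely lies in $\bK[x^{\pm1}]$; this is what occupies most of \cite[\S3]{RY}, and it transcribes essentially verbatim. Relative to the untwisted case the only new inputs are that the orbits $W.\alpha_0$ and $W.\alpha_n$ carry the two-term functions $\psi^\pm_0$ and $\psi^\pm_n$ of \eqref{eq:psi(z)} in place of a single factor, and that at $i=0$ the generator is the composite $T^\vee_0=T^{-1}_{s_{2\ep_1}}x_1^{-1}$ of \eqref{eq:inv} rather than an ordinary Hecke generator; one therefore checks separately that expanding this composite still contributes exactly one crossing term and one folding term with the coefficients stated.
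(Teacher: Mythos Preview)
The paper does not prove this statement at all: Fact~\ref{fct:RY} is presented as a quoted result from \cite{RY} and \cite{OS}, with no argument supplied. Your opening sentence already acknowledges this, and for the paper's purposes the citation \emph{is} the proof, so there is nothing to compare against.

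Regarding your direct sketch, the overall strategy is right but the mechanism you describe has a gap. You propose to expand every factor as $S^Y_{i_k}=T^\vee_{i_k}+\psi^+_{i_k}(Y^{-\alpha_{i_k}})$ simultaneously and then ``transport every rational function of $Y$ to the right''. Once the product is fully expanded, however, the operators sitting to the right of a chosen $\psi$-factor are bare $T^\vee_{i_j}$'s, not intertwiners $S^Y_{i_j}$; commuting a rational function of $Y$ past $T_i$ via the $\phi$-image of the Lusztig relation does not merely conjugate its argument but introduces correction terms, so the clean bijection with alcove walks does not fall out of that expansion order. The remedy is to process inductively rather than all at once: at step $k$ split $S^Y_{i_k}$ as $T^\vee_{i_k}+\psi^+_{i_k}$ or $(T^\vee_{i_k})^{-1}+\psi^-_{i_k}$ according to the sign of the step (this is how both $\psi^+$ and $\psi^-$ enter $f_p$, which your use of only the first presentation does not explain), push the $\psi$-part past the still \emph{unexpanded} tail $S^Y_{i_{k+1}}\cdots S^Y_{i_r}$ using \eqref{eq:SYl}---this is precisely what produces the argument $Y^{-\beta_k}$---and recurse. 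This recursion is the mirror of the paper's own proof of Proposition~\ref{prp:xzSY} and is essentially what \cite{RY,OS} carry out. Finally, your claim that the pure-crossing part ``assembles into $t_{\dir(p)}^{1/2}x^{\wgt(p)}$'' is the substantive step; it amounts to the identity $x^{e(p)}\cdot 1=t_{\dir(p)}^{1/2}x^{\wgt(p)}$ via \eqref{eq:xzT}, Lemma~\ref{lem:xmu=xtmu}, and $T_i\cdot 1=t_i^{1/2}$ for $i\ne 0$, and deserves to be stated rather than absorbed into ``bookkeeping''.
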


Next we introduce some notations necessary for Proposition \ref{prp:EPE},
which are basically the ones in \cite[\S 4.1]{Y}.
Let us given $v,w \in W$ and a reduced expression $v=s_{i_1}\cdots s_{i_r}$.
Recall the set $\Gamma^C(\oa{v},w)$ of alcove walks belonging to the dominant chamber $C$
as in \eqref{eq:GammaC}.
Consider an alcove walk in $\Gamma^C(\oa{v},w)$ 
together with coloring of all the folding steps by either black or gray.
We call such a data a \emph{colored alcove walk}, and denote by
\begin{align}\label{eq:GC2}
 \Gamma^C_2(\oa{v},w)
\end{align}
the set of colored alcove walks arising from alcove walks in $\Gamma^C(\oa{v},w)$.

For a colored alcove walk $p \in \Gamma^C_2(\oa{v},w)$, we denote by
\begin{align}\label{eq:p*}
 p^* \in \Gamma(\oa{v}^{-1},w^{-1}e(p))
\end{align}
the uncolored alcove walk obtained by straightening all the gray foldings steps of $p$
and by translation so that it ends at $e(p^*)=e \in W$.
More explicitly, for a colored positive walk $p \in  \Gamma^C_2(\oa{v},w)$ with 
\[
 p = (w A, w s_{i_1}^{b_1}A, \ldots, w s_{i_1}^{b_1} \cdots s_{i_r}^{b_r}A),
\]
we define $\wt{p}_k$ for $k=1,\ldots,r$ as follows, 
according to whether the $k$-th step 
$p_{k-1} = w s_{i_1}^{b_1} \cdots s_{i_{k-1}}^{b_{k-1}} A \to 
     p_k = w s_{i_1}^{b_1} \cdots s_{i_k}^{b_k} A$ 
is a gray folding step or not:
\[
 \wt{p}_k := 
 \begin{cases}
 w s_{i_1}^{b_1} \cdots s_{i_{k-1}}^{b_{k-1}} s_{i_k} A 
 & (\text{$p_{k-1} \to p_k$ is a gray folding step}) \\
 p_k & (\text{otherwise})
 \end{cases}.
\]
Thus we obtain a new uncolored alcove walk 
$\wt{p}=(\wt{p}_0,\ldots,\wt{p}_r) \in \Gamma(\oa{v},w)$,
which was called the one obtained ``by straightening all the gray foldings".
Next we denote by $(c_1,\ldots,c_r) \in \{0,1\}^r$ the bit sequence 
corresponding to $\wt{p}$. In other words,
we have $\wt{p}=(w A, \ldots, w s_{i_1}^{c_1} \cdots s_{i_r}^{c_r}A)$.
Now the alcove walk $p^*$ is obtained by reversing 
the order of $\wt{p}$ and 
 translating the start to $w^{-1}e(\wt{p})$.
Explicitly, we have 
\[
  p^* := (s_{i_1}^{c_1} \cdots s_{i_r}^{c_r}A, 
          s_{i_1}^{c_1} \cdots s_{i_{r-1}}^{c_{r-1}}A, \ldots, s_{i_1}^{c_1} A, A).
\]

\begin{prp}[{c.f.\ \cite[Theorem 4.2]{Y}}]\label{prp:EPE}
For a weight $\mu \in \frh_\bZ^*$, 
we take a reduced expression $w(\mu)=s_{i_r}\cdots s_{i_1}$ of
$w(\mu) \in t(\mu) W_0 \subset W$.
Then for any dominant weight $\lambda \in (\frh_\bZ^*)_+$ we have
\[
 E_\mu(x) P_\lambda(x) = \sum_{v\in W^\lambda}  
 \sum_{p\in\Gamma^C_2(\oa{w(\mu)}^{-1},(v w(\lambda))^{-1})} 
 A_p C_p E_{\varpi(p)}(x).
\]
Here $W^\lambda$ is given by \eqref{eq:W^mu}, and the term $A_p$ is given 
with the help of $\rho(\alpha)$ in Lemma \ref{lem:PmuEvmu} by
\begin{align*}
A_p &:=
\prod_{\alpha \in w(\lambda)^{-1}\cL(v^{-1},v_\lambda^{-1})} \rho(\alpha), \\
\rho(\alpha) &:= 
 \begin{dcases}
  t^\hf 
  \frac{1-t^{-1}q^{\sh(-\alpha)}t^{\hgt(-\alpha)}}
       {1-q^{\sh(-\alpha)}t^{\hgt(-\alpha)}} & (\alpha \not\in W.\alpha_n) \\
  t_n^\hf
  \frac{(1+t_0^{\hf}t_n^{-\hf}q^{\hf\sh(-\alpha)}t^{\hf\hgt(-\alpha)})
        (1-t_0^{-\hf}t_n^{-\hf}
        q^{\hf\sh(-\alpha)} t^{\hf\hgt(-\alpha)})}
       {1-q^{\sh(-\alpha)}t^{\hgt(-\alpha)}} & (\alpha \in W.\alpha_n)
 \end{dcases}.
\end{align*}
The term $C_p$ is given by $C_p:=\prod_{k=1}^r C_{p,k}$, whose factor $C_{p,k}$ is 
determined by the $k$-th step of $p$ as follows.
\begin{align*}
C_{p,k} := \begin{cases}
1
&\text{the $k$-th step of $p$ is a positive crossing}\\
\prod_{k \in \xi_{\des}(p)}n_{i_k}(q^{\sh(-h_k(p))}t^{\hgt(-h_k(p))}) 
&\text{a negative crossing}\\
-\psi^+_{i_k}(q^{\sh(-h_k(p))}t^{\hgt(-h_k(p))})
&\text{a gray positive folding}\\
-\psi^-_{i_k}(q^{\sh(-h_k(p))}t^{\hgt(-h_k(p))})
&\text{a gray negative folding}\\
\psi^+_{i_k}(q^{\sh(-\beta_k)}t^{\hgt(-\beta_k)})
&\text{a black folding and the $k$-th step of $p^*$ is positive}\\
\psi^-_{i_k}(q^{\sh(-\beta_k)}t^{\hgt(-\beta_k)})
&\text{a black folding and the $k$-th step of $p^*$ is negative}
\end{cases},
\end{align*}
where $n_i(Y^{\beta})$ is given by Proposition \ref{prp:SYiw},
$\psi_{i_k}^\pm(z)$ is given by \eqref{eq:psi(z)} and
$h_k(p)$ is given by \eqref{eq:hk(p)}.
We also used $\beta_k := s_{i_1}\cdots s_{i_{r-1}}(\alpha_{i_r})$ for $k=1,\dots r$.
Finally $\varpi(p)$ is given by \eqref{eq:varpi}.
\end{prp}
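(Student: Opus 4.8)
The plan is to run the $(C^\vee_n,C_n)$-analogue of Yip's proof of \cite[Theorem 4.2]{Y}, assembling the three expansions already available: Lemma \ref{lem:PmuEvmu} for $P_\lambda$ in terms of non-symmetric Koornwinder polynomials, the Ram--Yip type formula for $E_\mu$ (Fact \ref{fct:RY}), and the alcove-walk expansion of $x^\lambda E_\mu$ from Corollary \ref{cor:xmuE} (itself a consequence of Proposition \ref{prp:xzSY}). First I would substitute Lemma \ref{lem:PmuEvmu}:
\begin{align*}
 E_\mu(x)\, P_\lambda(x)
 = \sum_{v \in W^\lambda}
   \Bigl[\tprod_{\alpha \in w(\lambda)^{-1}\cL(v^{-1},v_\lambda^{-1})}\rho(\alpha)\Bigr]
   E_\mu(x)\, E_{v.\lambda}(x).
\end{align*}
The bracketed factor is precisely the asserted $A_p$ (which, as remarked after the statement, depends only on $v$), and $v\,w(\lambda)=w(v.\lambda)$ for $v\in W^\lambda$. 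So it remains to establish, for each weight $\nu=v.\lambda$, an expansion $E_\mu(x)\,E_\nu(x)=\sum_{p}C_p\,E_{\varpi(p)}(x)$ summed over $p\in\Gamma^C_2(\oa{w(\mu)}^{-1},w(\nu)^{-1})$ with $C_p=\prod_k C_{p,k}$.

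To get this, fix the reduced expression $w(\mu)=s_{i_r}\cdots s_{i_1}$ and expand $E_\mu(x)$ by Fact \ref{fct:RY} as $\sum_{q\in\Gamma(\oa{w(\mu)},e)}f_q\,t_{\dir(q)}^{\hf}\,x^{\wgt(q)}$. Writing $E_\nu(x)=S^Y_{w(\nu)}1$ and using Lemma \ref{lem:xmu=xtmu} to identify multiplication by the monomial $x^{\wgt(q)}$ with the element $x^{t(\wgt(q))}\in DH(W)$ of \eqref{eq:xzT}, this gives
\begin{align*}
 E_\mu(x)\,E_\nu(x)
 = \sum_{q\in\Gamma(\oa{w(\mu)},e)}f_q\,t_{\dir(q)}^{\hf}\;
   x^{t(\wgt(q))}\,S^Y_{w(\nu)}\,1 .
\end{align*}
Applying Proposition \ref{prp:xzSY} (equivalently Corollary \ref{cor:xmuE}) to each summand expands $x^{t(\wgt(q))}S^Y_{w(\nu)}1$ over alcove walks beginning at $w(\nu)^{-1}A$, with contributions from crossings, from positive and negative foldings (the $\psi^\pm$-factors), and from descent crossings (the $n_i$-factors); the vanishing of the $n_i$-factors outside the dominant chamber, built into Corollary \ref{cor:xmuE}, forces every surviving alcove walk to lie in $C$, which is why the final index set is $\Gamma^C_2$ rather than the larger $\Gamma_2$.

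The final and technically most delicate step is the combinatorial reindexing, which I expect to be the main obstacle. After the two expansions above, each surviving term is labelled by a Ram--Yip walk $q$ for $E_\mu$ together with a record of how the $x$-intertwiner expansion of $x^{t(\wgt(q))}S^Y_{w(\nu)}$ folded; I would repackage this data as a colored alcove walk $p\in\Gamma^C_2(\oa{w(\mu)}^{-1},w(\nu)^{-1})$ in which the foldings produced by the $x$-intertwiner step are colored \emph{gray} and those inherited from $q$ are colored \emph{black}, so that straightening the gray foldings of $p$ and translating recovers exactly the walk $p^*$ of \eqref{eq:p*}, which is nothing but a rereading of $q$. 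One then checks that under this dictionary the product of the Ram--Yip weight of $q$ (including $t_{\dir(q)}^{\hf}$) with the factors $g$, $n$ coming from Corollary \ref{cor:xmuE} reorganizes as $C_p=\prod_k C_{p,k}$: the gray foldings and the negative crossings yield the $-\psi^\pm_{i_k}(q^{\sh(-h_k(p))}t^{\hgt(-h_k(p))})$ and $n_{i_k}(q^{\sh(-h_k(p))}t^{\hgt(-h_k(p))})$ factors, the black foldings yield the $\psi^\pm_{i_k}(q^{\sh(-\beta_k)}t^{\hgt(-\beta_k)})$ factors (with the sign determined by whether the corresponding step of $p^*$ is positive or negative), the positive crossings contribute $1$, and the output index is $\varpi(p)$ of \eqref{eq:varpi}. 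The hard part will be making this bijection genuinely well-defined: one must choose, for each $q$, the reduced expression of $t(\wgt(q))$ compatibly with the walk $q$ so that the glued alcove walk carries the fixed type $\oa{w(\mu)}^{-1}$, track the $W_0$-direction $\dir(q)$ of $e(q)$ and the factor $t_{\dir(q)}^{\hf}$ and see that they are absorbed into the sign-data of $p^*$, and verify that length and chamber conditions are preserved. Exactly as in the proofs of Proposition \ref{prp:xzSY} and Lemma \ref{lem:PmuEvmu}, this is where the $(C^\vee_n,C_n)$-specific root data --- the separate orbits $W.\alpha_n$ and $W.\alpha_0$ responsible for the exceptional forms of $n_i$ and $\psi_i^\pm$ --- must be handled with extra care, but the bookkeeping is routine once the correspondence between walk data and colored walks is in place.
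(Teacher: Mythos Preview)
Your overall strategy---combine Lemma~\ref{lem:PmuEvmu}, the Ram--Yip formula (Fact~\ref{fct:RY}), and the alcove-walk expansion of Corollary~\ref{cor:xmuE}, then reindex over colored walks---matches the paper's, but the order in which you apply the ingredients creates precisely the obstacle you flag as ``the hard part''. By invoking Lemma~\ref{lem:PmuEvmu} first, you discard the symmetrizer $U$ and are then forced to expand $x^{t(\wgt(q))}S^Y_{w(\nu)}1$ over walks of type $\oa{t(-\wgt(q))}$, a type that varies with $q$ and has no direct relation to the fixed reduced word of $w(\mu)$. You correctly see that the leftover factor $t_{\dir(q)}^{1/2}$ and the $q$-dependent walk type must somehow be reconciled with the single type $\oa{w(\mu)}^{-1}$, but you do not say how, and this step is not routine.

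The paper avoids this by reversing the order. It works with $E_\mu(x)\,U\,S^Y_{w(\lambda)}1$ directly: after inserting the Ram--Yip expansion $E_\mu = \sum_h f_h\,t_{\dir(h)}^{1/2}x^{\wgt(h)}$, the relation $T_iU=t_i^{1/2}U$ of \eqref{eq:UTUt} yields $t_{\dir(h)}^{1/2}x^{\wgt(h)}U = x^{e(h)}U$, so the direction factor is absorbed for free and the monomial is replaced by the element $x^{e(h)}$ of \eqref{eq:xzT}, whose natural ``type'' is the subword of the fixed reduced expression of $w(\mu)$ picked out by the crossing steps of $h$. Only then is $U\,S^Y_{w(\lambda)}1$ rewritten, via the intermediate identity \eqref{eq:PE:US1} from the proof of Lemma~\ref{lem:PmuEvmu}, as $t_{w_\lambda}^{-1/2}W_\lambda(t)\sum_{v\in W^\lambda}A_p\,S^Y_{vw(\lambda)}1$. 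Applying Proposition~\ref{prp:xzSY} to $x^{e(h)}S^Y_{vw(\lambda)}$ now produces walks $q\in\Gamma^C(\oa{e(h)}^{-1},(vw(\lambda))^{-1})$, and the pair $(h,q)$ assembles into a colored walk of type $\oa{w(\mu)}^{-1}$ by re-inserting black foldings at the folding positions of $h$ while coloring the foldings already present in $q$ gray. The $U$-trick is the missing idea that makes this reindexing transparent, and it is no longer available once you have reduced $P_\lambda$ to $\sum_v A_v E_{v.\lambda}$.
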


Note that the term $A_p$ actually depends only on $v \in W^\mu$, 
which corresponds to the beginning of the colored alcove walk $p$.

\begin{proof}
On the Ram-Yip type formula 
$E_\mu(x) = \sum_{h \in \Gamma(\oa{w(\mu)},e)} f_h t_{\dir(h)}^\hf x^{\wgt(h)}$
(Fact \ref{fct:RY}),
let us act $U S_{w(\lambda)}^Y 1$ from the left.
Then we have
\begin{align*}
 E_\mu(x) U S_{w(\lambda)}^Y 1
=\bigl[\sum_{h\in\Gamma(\oa{w(\mu)},e)} f_h t_{\dir(h)}^\hf x^{\wgt(h)}\bigr]
 U S_{w(\lambda)}^Y 1
=\sum_{h \in \Gamma(\oa{w(\mu)},e)}f_h x^{e(h)} U S_{w(\lambda)}^Y 1.
\end{align*}
Here the second equality follows from 
the definition \eqref{eq:wtdr} of $\wgt(h)$ and $\dir(h)$,
as well as from the relation $T_i U = t_i^\hf U$ in \eqref{eq:UTUt}.
Moreover, by Lemma \ref{lem:PmuEvmu} and using the notation in its proof, we have
\begin{align}\label{eq:EUS1}
\begin{split}
E_\mu(x) U S_{w(\lambda)}^Y 1
&=\sum_{h \in \Gamma(\oa{w(\mu)},e)}f_h x^{e(h)} 
  \bigl[ t_{w_\lambda}^{-\hf}W_\lambda(t) \sum_{v \in W^\lambda}
         S_v^Y b_{(v^{-1},v_\lambda^{-1})}(Y) S_{w(\lambda)}^Y 1 \bigr]\\
&=t_{w_\lambda}^{-\hf}W_\lambda(t) \sum_{v \in W^\lambda} 
  \sum_{h \in \Gamma(\oa{w(\mu)},e)}
  f_h S_v^Y S_{w(\lambda)}^Y b_{(v^{-1},v_\lambda^{-1})}(w(\lambda)^{-1}.Y) 1
\\
&=t_{w_\lambda}^{-\hf}W_\lambda(t) \sum_{v \in W^\lambda} A_p
  \sum_{h \in \Gamma(\oa{w(\mu)},e)}f_h x^{e(h)} S_{v w(\lambda)}^Y 1.
\end{split}
\end{align}
Here we set
$A_p :=b_{((v w(\lambda))^{-1},t(-w_0\lambda))}= 
 \prod_{\alpha \in w(\lambda)^{-1}\cL(v^{-1},v_\lambda^{-1})} \rho(\alpha)$.
As for the factor $f_h x^{e(h)} S_{v w(\lambda)}^Y 1$ 
in the final line of \eqref{eq:EUS1}, denoting $z:=(v w(\lambda))^{-1}$ and 
using Proposition \ref{prp:xzSY} and Corollary \ref{cor:xmuE}, we have 
\begin{align}\label{eq:factor}
 f_h x^{e(h)} S_{v w(\lambda)}^Y 1
=f_h \sum_{q \in \Gamma(\oa{e(h)}^{-1},z)}
 S_{e(q)^{-1}}^Y n_q(Y) g_q (Y) 1 
=f_h \sum_{q \in \Gamma^C(\oa{e(h)}^{-1},z)} n_q g_q E_{\varpi(q)}(x).
\end{align}
We will rewrite this sum over uncolored alcove walks in $\Gamma^C(\oa{e(h)}^{-1},z)$
as a sum over colored alcove walks in  $\Gamma^C_2(\oa{w(\mu)}^{-1},z)$.

Let us given an uncolored alcove walk $q \in \Gamma^C(\oa{e(h)}^{-1},z)$.
Since $q$ is an alcove walk of type $\oa{e(h)}^{-1}=\oa{w(\mu)}^{-1}$,
we can compare the bit sequence of $q$ with the bit sequence of $h$.
In this comparison, if the $k$-th step of $q$ is a folding and 
the $k$-th step of $h$ is a crossing, then we color the $k$-th folding step of $q$ by gray.
Otherwise we color it by black.
Thus we obtain a colored alcove walk, which is denoted by $p$.
Note that we have $p \in \Gamma^C_2(\oa{w(\mu)}^{-1},z)$.
Then each term of the right hand side in \eqref{eq:factor} is equal to 
\[
 f_h n_q g_q E_{\varpi(q)}(x) = f_{p^*} n_{p} g_{p} E_{\varpi(p)}(x),
\]
where $p^*$ is given by \eqref{eq:p*}.
We can also express $f_{p^*}$ using $\beta_k=s_{i_1}\cdots s_{i_{k-1}}(\alpha_{i_k})$ as
\[
 f_{p^*} =
 \prod_{k\in\varphi_+(p^*)}\psi_{i_k}^{+}(q^{\sh(-\beta_k)} t^{\hgt(-\beta_k)})
 \prod_{k\in\varphi_-(p^*)}\psi_{i_k}^{-}(q^{\sh(-\beta_k)} t^{\hgt(-\beta_k)}).
\]
As a result, the last line of \eqref{eq:EUS1} is rewritten by 
a sum over $p \in \Gamma^C_2(\oa{w(\mu)}^{-1},z)$.

Divided by the factor $t_{w_\lambda}^{-\hf}W_\lambda(t)$,
the left hand side of \eqref{eq:EUS1} 
is equal to $P_\lambda(x)$.
Thus we have
\begin{align*}
  E_\mu(x)P_\lambda(x)
&=\sum_{v \in W^\lambda} A_p 
  \sum_{h \in \Gamma(\oa{w(\mu)},e)} f_h 
  \sum_{q \in \Gamma^C(\oa{e(h)}^{-1},z)} n_q g_q E_{\varpi(q)}(x)\\
&=\sum_{v \in W^\lambda} A_p 
  \sum_{p \in \Gamma^C_2(\oa{w(\mu)}^{-1},z)} f_{p^*} g_p n_p E_{\varpi(p)}(x).
\end{align*}
We obtain the result by collecting the terms from $f_{p^*}$, $g_p$ and $n_p$
which depend only on the $k$-th step of $p \in \Gamma^C_2(\oa{w(\mu)}^{-1},z)$
and denoting them by $C_{p,k}$.
\end{proof}

\subsection{Littlewood-Richardson coefficents for Koornwinder polynomials}

In this subsection, we derive our main Theorem \ref{thm:LR}
on LR coefficients of Koornwinder polynomials.

We start with a preliminary lemma.
Recall the complete system $W^\lambda$ of representatives of $W_0/W_\lambda$ 
in \eqref{eq:W^mu} and the element $w(\lambda) \in t(\lambda)W_0$ in \eqref{eq:w(mu)}.

\begin{lem}[{c.f.\ \cite[Proposition 3.7]{Y}}]\label{lem:USS1}
Let $\lambda \in (\frh_\bZ^*)_+$. 
If $v \in W^\lambda$ satisfies $v w(\lambda) \bgeq w(\lambda)$, then we have
\[
 U S_v^Y S_{w(\lambda)}^Y 1 = 
 \Bigl[ \prod_{\alpha\in \cL(w(\lambda)^{-1}, (v w(\lambda))^{-1})} \rho(-\alpha)\Bigr]
 U S_{w(\lambda)}^Y 1,
\]
where $\rho(\alpha)$ is defined in Lemma \ref{lem:PmuEvmu}.
\end{lem}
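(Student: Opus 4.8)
The plan is to induct on $\ell(v)$ for $v\in W^\lambda$. The case $v=e$ is immediate, since $\cL(w(\lambda)^{-1},w(\lambda)^{-1})=\emptyset$ makes the right-hand side equal to $US_{w(\lambda)}^Y 1$ and $S_e^Y=1$. For the inductive step I would pick a left descent of $v$, writing $v=s_iu$ with $i\in\{1,\dots,n\}$ and $\ell(u)=\ell(v)-1$, and aim to reduce to $u$. The decisive structural point is that $s_i$ is a \emph{finite} simple reflection: by \eqref{eq:SYi} we have $S_i^Y=T_i+\psi_i^+(Y^{-\alpha_i})$ with the genuine Hecke generator $T_i$ (no $T_0^\vee$ enters — this is why only the finite part of the root datum is involved here), so the symmetrizer $U$ can absorb $T_i$ via $UT_i=t_i^{\hf}U$ from \eqref{eq:UTUt}.

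Granting for the moment the two Coxeter-theoretic facts that $u=s_iv$ again lies in $W^\lambda$ and that $\ell(vw(\lambda))=\ell(v)+\ell(w(\lambda))$ for every $v\in W^\lambda$ — the latter yields $uw(\lambda)\bgeq w(\lambda)$, so the induction hypothesis is available for $u$, and it also legitimizes the factorizations $S_v^Y=S_i^YS_u^Y$ and $S_u^YS_{w(\lambda)}^Y=S_{uw(\lambda)}^Y$ — I would compute
\[
 US_v^YS_{w(\lambda)}^Y 1 = US_i^YS_{uw(\lambda)}^Y 1 = t_i^{\hf}\,US_{uw(\lambda)}^Y 1 + U\,\psi_i^+(Y^{-\alpha_i})\,S_{uw(\lambda)}^Y 1 .
\]
In the last term, \eqref{eq:SYl} moves $\psi_i^+(Y^{-\alpha_i})$ past $S_{uw(\lambda)}^Y$, turning it into $\psi_i^+(Y^{-\gamma})$ with $\gamma:=(uw(\lambda))^{-1}\alpha_i\in S$; then \eqref{eq:Ylam1} evaluates $\psi_i^+(Y^{-\gamma})\,1$ to the scalar $\psi_i^+(q^{\sh(-\gamma)}t^{\hgt(-\gamma)})\in\bK$. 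This gives
\[
 US_v^YS_{w(\lambda)}^Y 1 = \bigl(t_i^{\hf}+\psi_i^+(q^{\sh(-\gamma)}t^{\hgt(-\gamma)})\bigr)\,US_u^YS_{w(\lambda)}^Y 1 .
\]
It then remains to identify the scalar: $t_i^{\hf}+\psi_i^+(q^{\sh(-\gamma)}t^{\hgt(-\gamma)})=\rho(-\gamma)$. This is the same elementary rational-function manipulation already carried out in the proof of Lemma \ref{lem:U} (there written as $t_i^{\hf}+\psi_i^+(Y^{\alpha_i})=b(Y^{-\alpha_i})$, then evaluated on $1$ as in the proof of Lemma \ref{lem:PmuEvmu}); concretely one treats the cases $i\le n-1$ (so $\gamma\notin W.\alpha_n$) and $i=n$ (so $\gamma\in W.\alpha_n$) separately, using the explicit $\psi_i^\pm$ of \eqref{eq:psi(z)} and the relation $q^{\sh(-\gamma)}t^{\hgt(-\gamma)}=(q^{\sh(\gamma)}t^{\hgt(\gamma)})^{-1}$.

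To close the induction I would then match the $\cL$-products. Since $w(\lambda)^{-1}\bleq(uw(\lambda))^{-1}$ and $w(\lambda)^{-1}\bleq(vw(\lambda))^{-1}$, \eqref{eq:cL:trs} rewrites $\cL(w(\lambda)^{-1},(vw(\lambda))^{-1})=w(\lambda)^{-1}.\cL(v^{-1})$ and $\cL(w(\lambda)^{-1},(uw(\lambda))^{-1})=w(\lambda)^{-1}.\cL(u^{-1})$; and since $v^{-1}=u^{-1}s_i$ is a reduced expression, the definition \eqref{eq:cL(w)} gives $\cL(v^{-1})=\cL(u^{-1})\sqcup\{u^{-1}\alpha_i\}$, whence
\[
 \cL(w(\lambda)^{-1},(vw(\lambda))^{-1})=\cL(w(\lambda)^{-1},(uw(\lambda))^{-1})\sqcup\{\gamma\}.
\]
Substituting the induction hypothesis for $u$ into the second displayed identity then yields the claim.

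I expect the parts needing the most care to be, first, the two deferred Coxeter facts — that a left-truncation of a minimal coset representative in $W^\lambda$ stays in $W^\lambda$ (which follows from the standard descent characterization of $W^\lambda$), and that $\ell(vw(\lambda))=\ell(v)+\ell(w(\lambda))$ for $v\in W^\lambda$ (a standard property of $w(\lambda)$, already used in the proof of Lemma \ref{lem:PmuEvmu}, which also supplies $uw(\lambda)\bgeq w(\lambda)$) — and, second, keeping the parameter-orbit conventions straight in the scalar identity $t_i^{\hf}+\psi_i^+(\cdots)=\rho(-\gamma)$, especially in the $i=n$ case where the square-root parameters $t_0^{\hf},t_n^{\hf}$ and the half-powers of $q^{\sh}$, $t^{\hgt}$ must be matched exactly. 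Neither is deep; the argument is otherwise a direct $(C^\vee_n,C_n)$-transcription of Yip's \cite[Proposition 3.7]{Y}.
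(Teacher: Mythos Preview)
Your proposal is correct and follows essentially the same route as the paper's proof: both peel off one finite simple reflection at a time from the left of $v$, use $U T_i = t_i^{\hf} U$ together with $S_i^Y = T_i + \psi_i^+(Y^{-\alpha_i})$ for $i=1,\dots,n$, push the rational function past $S^Y_{u w(\lambda)}$ via \eqref{eq:SYl}, and evaluate on $1$ via \eqref{eq:Ylam1} to produce the factor $\rho(-\gamma)$. The paper simply unwinds the induction into a single displayed chain of equalities, while you frame it as an explicit induction and spell out the underlying Coxeter facts (that $u=s_i v\in W^\lambda$ and $\ell(v w(\lambda))=\ell(v)+\ell(w(\lambda))$) a bit more carefully; the content is the same.
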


\begin{proof}
Recall the equality $U T_i=U t_i^\hf$ for $i=1,\dots,n$ in \eqref{eq:UTUt}.
Therefore
we have
\[
 U S_i^Y S_w^Y 1
=U \bigl(T_i^\vee+\psi_i^+(Y^{-\alpha_i})\bigr) S_w^Y 1
=U S_w^Y \bigl(t_i^\hf+\psi_i^+(q^{\sh(-w^{-1}\alpha_i)} t^{\hgt(-w^{-1}\alpha_i)})\bigr)1.
\]
Assume that $v \in W^\lambda$ satisfies $v w(\lambda) \bgeq w(\lambda)$, 
and take a reduced expression $v=s_{i_1}\cdots s_{i_r}$.
Using the above relation, we expand the product 
$U S_v^Y S_{w(\lambda)}^Y 1 = U S_{i_1}^Y\cdots S_{i_r}^Y S_{w(\lambda)}^Y 1$ in order.
We have
\begin{align*}
U S_v^Y S_{w(\lambda)}^Y 1
&=U (t_{i_1}^\hf+\psi_{i_1}^+(Y^{-\alpha_{i_1}})) S_{s_{i_1}v}^Y S_{w(\lambda)}^Y 1
 =U S_{s_{i_1}v}^Y 
  (t_{i_1}^\hf+\psi_{i_1}^+(Y^{-s_{i_r}\cdots s_{i_2}(\alpha_{i_1})}))
  S_{w(\lambda)}^Y 1 \\
&=(t_{i_1}^\hf+\psi_{i_1}^+(Y^{-s_{i_r}\cdots s_{i_2}(\alpha_{i_1})})) U 
  (t_{i_2}^\hf+\psi_{i_2}^+(Y^{-\alpha_{i_2}})) S_{s_{i_1}s_{i_2}v}^Y S_{w(\lambda)}^Y 1 \\
&=\cdots \\
&=U \Bigl[\prod_{j=1}^r \bigl(
   t_{i_j}^\hf+\psi_{i_j}^+(Y^{-w(\lambda)^{-1}s_{i_r}\cdots s_{i_{j+1}}\alpha_{i_j}})
  \bigr) \Bigr] S_{w(\lambda)}^Y 1 \\
&=U S_{w(\lambda)}^Y \Bigl[\prod_{j=1}^r \bigl(
   t_{i_j}^\hf+\psi_{i_j}^+(Y^{-w(\lambda)^{-1}s_{i_r}\cdots s_{i_{j+1}}\alpha_{i_j}})
  \bigr) \Bigr]1 
 =U S_{w(\lambda)}^Y 
 \Bigl[\prod_{\alpha \in \cL(w(\lambda)^{-1}, (v .w(\lambda))^{-1})} \rho(-\alpha)\Bigr]1.
\end{align*}
Therefore the claim is obtained.
\end{proof} 
 
We prepare some symbols to state the main theorem.
For $\mu \in \frh_\bZ^*$, the orbit $W_0.\mu$ contains a unique dominant weight.
We denote it by
\begin{align}\label{eq:dominant}
 \mu_+ \in W_0.\mu \cap (\frh_\bZ^*)_+.
\end{align} 
Let us also recall the set $\Gamma^C_2(\oa{v},w)$ of 
colored alcove walks defined in \eqref{eq:GC2}.

\begin{thm}\label{thm:LR}
Let us given dominant weights $\lambda,\mu \in (\frh_\bZ^*)_+$.
Choose a reduced expression $w(\lambda)=s_{i_r}\cdots s_{i_1}$ of 
the shortest element $w(\lambda) \in t(\lambda)W_0$ in \eqref{eq:w(mu)}.
Then we have
\[
 P_\lambda(x) P_\mu(x)=
 \frac{1}{t_{w_\lambda}^{-\hf}W_\lambda(t)}\sum_{v\in W^\mu} 
 \sum_{p\in\Gamma^C_2(\oa{w(\lambda)}^{-1},(v w(\mu))^{-1})} 
 A_p B_p C_p P_{-w_0.\wtt(p)}(x).
\]
Here $A_p := 
\prod_{\alpha \in w(\mu)^{-1}\cL(v^{-1},v_\mu^{-1})} \rho(\alpha)$
with $\rho(\alpha)$ given in Proposition \ref{prp:EPE}.
The term $C_p$ is the same as that in Proposition \ref{prp:EPE},
and $\wgt(p) \in \frh_\bZ^*$ is defined by \eqref{eq:wtdr}.
The term $B_p$ is defined by 
\[
 B_p := \prod_{\alpha\in \cL(t(\wgt(p))w_0, e(p))} \rho(-\alpha).
\]
\end{thm}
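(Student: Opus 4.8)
The plan is to follow the outline sketched in steps (i)--(iv) of the introduction, combining the three ingredients already established: the symmetrized formula for $P_\mu$ in terms of non-symmetric polynomials (Fact \ref{fct:Pmu} / Lemma \ref{lem:PmuEvmu}), the product formula $E_\mu P_\lambda = \sum_{v}\sum_p A_p C_p E_{\varpi(p)}$ (Proposition \ref{prp:EPE}), and the re-symmetrization lemma (Lemma \ref{lem:USS1}). First I would start from $P_\lambda(x) P_\mu(x)$, write $P_\mu(x) = \frac{1}{t_{w_\mu}^{-\hf}W_\mu(t)} U E_\mu(x)$ using Fact \ref{fct:Pmu}, and move the factor $P_\lambda(x)$ — which is $W_0$-invariant — past the symmetrizer. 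Concretely, since $P_\lambda$ commutes with everything as a multiplication operator and $U$ is built from the $T_w$, one writes $P_\lambda P_\mu = \frac{1}{t_{w_\mu}^{-\hf}W_\mu(t)} U\bigl(E_\mu(x) P_\lambda(x)\bigr)$, using that $T_i$ acts on a product $f\cdot g$ with $g$ invariant in a controlled way (the relation $(T_i f)\cdot 1 = t_i^\hf f$ when $s_i f = f$, used already in the proof of Lemma \ref{lem:PmuEvmu}, and the fact that $U P_\lambda = $ something proportional to $U$ applied appropriately).

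The second step is to substitute the expansion from Proposition \ref{prp:EPE}:
\[
 E_\mu(x) P_\lambda(x) = \sum_{v\in W^\lambda}\sum_{p\in\Gamma^C_2(\oa{w(\mu)}^{-1},(vw(\lambda))^{-1})} A_p C_p E_{\varpi(p)}(x),
\]
apply $U$ to both sides, and recognize that $U E_{\varpi(p)}(x) = U S_{w(\varpi(p))}^Y 1$. Here $w(\varpi(p)) = e(p)^{-1}$ by the defining relation \eqref{eq:varpi}, and by \eqref{eq:wtdr} we have $e(p) = t(\wgt(p))\dir(p)$. The key is then to re-symmetrize: $U S_{e(p)^{-1}}^Y 1$ should be turned into a multiple of $U S_{w(\nu)}^Y 1$ for the dominant weight $\nu$ in the orbit of the relevant weight, which produces $P_\nu(x)$ up to the normalization constant $t_{w_\nu}^{-\hf}W_\nu(t)$. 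This is exactly what Lemma \ref{lem:USS1} does: writing $e(p)^{-1} = (v'w(\varpi(p)_+))^{-1}$-type decomposition, or more directly observing that $\wgt(p)$ has dominant representative related to $-w_0.\wtt(p)$ (the minus-$w_0$ twist coming from the reduced expression of $w(\lambda)$ being read backwards and the duality built into $\phi$), Lemma \ref{lem:USS1} contributes the factor $B_p = \prod_{\alpha\in\cL(t(\wgt(p))w_0,e(p))}\rho(-\alpha)$. I would need to check carefully that the Bruhat-order hypothesis $vw(\lambda)\bgeq w(\lambda)$ of Lemma \ref{lem:USS1} holds for the relevant elements here, or else argue that the terms violating it vanish because of the dominant-chamber restriction $\Gamma^C_2$ (folding steps forced into the walls make $\rho$ or $\psi$ factors vanish, just as in the proof of Corollary \ref{cor:xmuE}).

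The third step is bookkeeping of the normalization constants: collecting $\frac{1}{t_{w_\mu}^{-\hf}W_\mu(t)}$ from the definition of $P_\mu$, the constant $t_{w_\lambda}^{-\hf}W_\lambda(t)$ that appears when expanding $U S_{w(\lambda)}^Y 1$ inside Proposition \ref{prp:EPE}, and the constant $t_{w_\nu}^{-\hf}W_\nu(t)$ with $\nu = -w_0.\wtt(p)$ that converts $U S_{w(\nu)}^Y 1$ back to $P_\nu(x)$. A small but genuine point is to verify that $W_{-w_0.\wtt(p)}(t) = W_{\wgt(p)_+}(t)$ and similarly for the $t_w$ factors, using that $-w_0$ is a Dynkin-diagram automorphism preserving the parameter assignment in type $C_n$ (indeed $-w_0 = \id$ for $C_n$, which simplifies this considerably), so that these normalization factors match up and combine to leave exactly $\frac{1}{t_{w_\lambda}^{-\hf}W_\lambda(t)}$ in front. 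Finally, swapping the roles $\lambda\leftrightarrow\mu$ as in step (iv) of the outline puts the statement in the asymmetric form displayed in the theorem, with the sum over $v\in W^\mu$ and walks of type $\oa{w(\lambda)}^{-1}$ beginning at $(vw(\mu))^{-1}$.

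The main obstacle I anticipate is the re-symmetrization step: matching the weight $\varpi(p)$ arising from Proposition \ref{prp:EPE} (defined via $e(p)^{-1} = w(\varpi(p))$) with the weight $-w_0.\wtt(p)$ appearing in the theorem, and simultaneously producing the correct factor $B_p$ indexed by $\cL(t(\wgt(p))w_0, e(p))$. This requires understanding precisely how $e(p)$ decomposes relative to the ``dominant part'' of its translation component and how Lemma \ref{lem:USS1}'s product $\prod_{\alpha\in\cL(w(\lambda)^{-1},(vw(\lambda))^{-1})}\rho(-\alpha)$ re-indexes after the identifications; the element $t(\wgt(p))w_0$ enters because $w(\nu)$ for $\nu = \wgt(p)_+$ differs from $t(\wgt(p))$ by precisely the shortest coset element that straightens $\dir(p)$, and $w_0$ appears through the relation $w(\lambda) = t(\lambda)w_0 w_\lambda$ (or the analogous minimal-length statement). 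Everything else — commuting $P_\lambda$ past $U$, the vanishing of out-of-chamber terms, and the constant bookkeeping — is routine given the lemmas already proved, essentially transcribing Yip's argument \cite[\S 4.1]{Y} with the $(C^\vee_n,C_n)$ data substituted in.
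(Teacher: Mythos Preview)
Your proposal is correct and follows essentially the same route as the paper: apply the symmetrizer $U$ to both sides of Proposition~\ref{prp:EPE} (after writing $P_\lambda P_\mu = \frac{1}{t_{w_\lambda}^{-1/2}W_\lambda(t)}\,U(E_\lambda P_\mu)$, using that multiplication by the $W_0$-invariant $P_\mu$ commutes with each $T_i$), then invoke Lemma~\ref{lem:USS1} to convert each $U E_{\varpi(p)}$ into $B_p$ times a Koornwinder polynomial; the paper performs the $\lambda\leftrightarrow\mu$ swap inside the application of Proposition~\ref{prp:EPE} rather than at the end, but the argument is otherwise identical. Your anticipated obstacles (the Bruhat hypothesis in Lemma~\ref{lem:USS1} and the bookkeeping of the Poincar\'e-polynomial normalizations $t_{w_\nu}^{-1/2}W_\nu(t)$) are handled only implicitly in the paper's short proof, so your caution there is warranted but does not indicate a different method.
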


\begin{proof}
The strategy is to calculate the product of Koornwinder polynomials 
by acting the symmetrizer $U$ to each side of the equation in Proposition \ref{prp:EPE}.

For a colored alcove walk 
$p \in \Gamma^C_2(\oa{w(\lambda)}^{-1},(v w(\mu))^{-1})$,
let $z \in W_0$ be the shortest element among $\{z \in W_0 \mid z.\varpi(p)_+=\varpi(p)\}$.
Note that we have $w(\varpi(p)_+)^{-1}=w(-w_0\varpi(p)_+)$.
Since $e(p) \in t(\wgt(p)) W_0$ by the definition of $\wgt(p)$,
$w(-w_0\varpi(p)_+)$ is the shortest element among $t(\wgt(p)) W_0$.
By Lemma \ref{lem:USS1}, we then have
\begin{align*}
  U E_{\varpi(p)}(x) 1
=U S_z^Y S_{w(\varpi(p)_+)}^Y 1 
=\Bigl[ \prod_{\alpha \in \cL(t(\wgt(p))w_0, e(p))} \rho(-\alpha)\Bigr]
 P_{-w_0.\wtt(p)}(x).
\end{align*}
By Proposition \ref{prp:EPE} and this equality, we have
\begin{align*}
P_\lambda(x)P_\mu(x)
&=\frac{1}{t_{w_\lambda}^{-\hf}W_\lambda(t)} U E_\lambda(x) P_\mu(x) 1 \\
&=\frac{1}{t_{w_\lambda}^{-\hf}W_\lambda(t)} \sum_{v \in W^\mu}  
  \sum_{p\in\Gamma^{C}_2(\oa{w(\lambda)}^{-1},(v w(\mu))^{-1})} 
A_p  B_p C_p P_{-w_0.\wtt(p)}(x).
\end{align*}
Hence the claim is obtained.
\end{proof} 

\section{Special cases of Littlewood-Richardson coefficients}\label{s:special}

In Theorem \ref{thm:LR}, we derived an explicit formula of
the LR coefficient $c_{\lambda,\mu}^\nu$ in the product 
$P_\lambda(x)P_\mu(x)=\sum_{\nu}c_{\lambda,\mu}^\nu P_\nu(x)$ 
of Koornwinder polynomials using alcove walks.
In this section, we discuss several specializations of the formula.

\subsection{Askey-Wilson polynomials}\label{ss:sp:AW}

As mentioned in \S \ref{ss:intro:K},
Koornwinder polynomials in the rank one case are nothing but Askey-Wilson polynomials.
In this case LR coefficients of Askey-Wilson polynomials are 
expected to be simpler than the general rank case in Theorem \ref{thm:LR}.

As a preparation, we summarize the data of the root system of rank $1$.
We consider the Euclid space $V=\bR \ep^\vee$ of dimension $1$ 
and its dual space $V^*=\bR \ep$.
The root system of type $C_1$ is $R=\pr{\pm 2\ep} \subset V^*$,
the simple root is $\alpha_1=2\ep$, and the fundamental weight is $\omega=\ep$.
The weight lattice is $P = \frh_\bZ^* = \bZ\ep \subset V^*$, 
and the set of dominant weights is $(\frh_\bZ^*)_+ = \bN \ep$.
The finite Weyl group $W_0$ is the group of order two generated by $s_1:=s_{\alpha_1}$, 
and the longest element of $W_0$ is $w_0=s_1$.
The affine root system of rank $1$ is 
$S=\pr{\pm2\ep+k\delta,\pm\ep+\frac{k}{2}\delta \mid k\in\bZ}$ with $\alpha_0=\delta-2\ep$, and 
the extended affine Weyl group $W$ is the group generated by $s_1$ and $s_0:=s_{\alpha_0}$.
The decomposition $W=t(P) \rtimes W_0$ \eqref{eq:exafW} is 
the semi-direct product of $t(P)=\langle t(\ep_1)=s_0 s_1 \rangle \simeq \bZ^2$ and
$W_0=\langle s_1 \rangle \simeq \bZ/2\bZ$.

We denote by
\[
 P_l(x) = P_l(x; q,t_0,t_1,u_0,u_1)
\]
the Askey-Wilson polynomial associated to the dominant weight 
$\lambda=l \omega=l \ep$ ($l \in \bN$).
Note that it has five parameters.

First, we consider the simplest case.  
Following the case of type $A$ (see \S \ref{ss:intro:LR}), 
we call the LR coefficients $c_{\lambda,\mu}^\nu$ 
with $\lambda$ or $\mu$ equal to a minuscule weight \emph{Pieri coefficients}.
Since the weight $\omega_1$ is the unique minuscule weight in the root system of type $C_n$,
we consider the case $\lambda=\omega$ for the rank one case.
 
Let us write down explicitly the Askey-Wilson polynomial $P_1(x)=P_\omega(x)$. 
In the following calculation, we need an explicit form of the term
$\rho(\alpha)$ ($\alpha \in S$) in Proposition \ref{prp:EPE} and Theorem \ref{thm:LR}.
The result is:
\begin{align}\label{eq:rho:n=1} 
\rho(\alpha) := 
 t_1^\hf
 \frac{(1+q^{\frac{k}{2}} t_0^{ \hf}t_1^{-\hf} (t_0 t_1)^{-\frac{j}{2}})
       (1-q^{\frac{k}{2}} t_0^{-\hf}t_1^{-\hf} (t_0 t_1)^{-\frac{j}{2}})}
      {1-q^{k}(t_0 t_1)^{-j}} 
 \quad (\alpha = 2j \ep +k \delta \in S).
\end{align}

\begin{lem}
The Askey-Wilson polynomial associated to the minuscule weight $\omega$ is
\[
 P_1(x) = x+x^{-1} 
 +\rho(2\delta-\alpha_1)\psi_0^-(qt_0t_1)+t_1^\hf\psi_0^+(qt_0t_1)
 +\psi_1^+(q^2t_0t_1)\psi_0^-(qt_0t_1).
\]
Here $\psi_k^{\pm}(z)$ ($k=0,1$) is given by \eqref{eq:psi(z)} with $n=1$.
Explicitly, we have
\begin{align}\label{eq:psi:n=1}
 \psi_0^{\pm}(z) := 
 \mp\frac{(u_1^\hf-u_1^{-\hf})+z^{\pm\hf}(u_0^\hf-u_0^{-\hf})}{1-z^{\pm1}},
 \quad 
 \psi_1^{\pm}(z) :=
 \mp\frac{(t_1^\hf-t_1^{-\hf})+z^{\pm\hf}(t_0^\hf-t_0^{-\hf})}{1-z^{\pm1}}.
\end{align}
\end{lem}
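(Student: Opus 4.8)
The plan is to specialize Lemma~\ref{lem:PmuEvmu} together with the Ram--Yip formula (Fact~\ref{fct:RY}) to the rank-one data recalled above. Since $\omega=\ep$ has trivial stabilizer $W_\omega=\{e\}$, the set $W^\omega$ of shortest coset representatives is all of $W_0=\{e,s_1\}$, with longest element $v_\omega=s_1$, and $s_1.\omega=-\omega$. For $v=s_1$ the set $\cL(s_1,s_1)$ is empty, while for $v=e$ we have $\cL(e,s_1)=\{\alpha_1\}$ and $w(\omega)^{-1}.\alpha_1=s_0.\alpha_1=2\delta-\alpha_1$ (using $s_0.\ep=\delta-\ep$). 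Hence Lemma~\ref{lem:PmuEvmu} yields
\[
 P_1(x)=\rho(2\delta-\alpha_1)\,E_\omega(x)+E_{-\omega}(x),
\]
so it remains to compute the non-symmetric Koornwinder polynomials $E_\omega(x)=S^Y_{w(\omega)}1$ and $E_{-\omega}(x)=S^Y_{w(-\omega)}1$, where $w(\omega)=s_0$ and $w(-\omega)=s_1s_0$ are the shortest elements of $t(\ep)W_0$ and $t(-\ep)W_0$, of lengths $1$ and $2$ respectively.

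First I would evaluate Fact~\ref{fct:RY} for $E_\omega$ with the one-letter reduced word $w(\omega)=s_0$. There are exactly two alcove walks in $\Gamma(\oa{s_0},e)$: the crossing $(A,s_0A)$, where $e(p)=s_0=t(\ep)s_1$ gives $\wgt(p)=\omega$, $\dir(p)=s_1$, $f_p=1$, contributing $t_1^{\hf}x$; and the folding $(A,A)$ across $H_{\alpha_0}$, which is a \emph{negative} folding because $\ol{\alpha_0}=-2\ep\in\wt{R}_-$ puts the $A$-side on $-$, and which by \eqref{eq:shhgt} (here $R_+^s=\emptyset$, $R_+^{\ell}=\{2\ep\}$, so $q^{\sh(-\alpha_0)}t^{\hgt(-\alpha_0)}=qt_0t_1$) contributes $\psi_0^{-}(qt_0t_1)$. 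Thus $E_\omega(x)=t_1^{\hf}x+\psi_0^{-}(qt_0t_1)$. For $E_{-\omega}$ with the reduced word $w(-\omega)=s_1s_0$ there are four alcove walks in $\Gamma(\oa{s_1s_0},e)$; tracking the edge sign of each folding step (the type-$1$ folding at $A$ is positive since $\ol{\alpha_1}\in\wt{R}_+$, the type-$0$ folding at $s_1A$ is positive since $\ol{s_1.\alpha_0}=2\ep\in\wt{R}_+$, and the type-$0$ folding at $A$ is negative as above), and using $\beta_1=s_0.\alpha_1=2\delta-\alpha_1$, $\beta_2=\alpha_0$ so that the evaluation points are $q^2t_0t_1$ and $qt_0t_1$, one obtains
\[
 E_{-\omega}(x)=x^{-1}+t_1^{\hf}\psi_1^{+}(q^2t_0t_1)\,x+t_1^{\hf}\psi_0^{+}(qt_0t_1)+\psi_1^{+}(q^2t_0t_1)\,\psi_0^{-}(qt_0t_1).
\]

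Substituting these into $P_1(x)=\rho(2\delta-\alpha_1)E_\omega(x)+E_{-\omega}(x)$ and collecting terms: the coefficient of $x^{-1}$ is $1$; the constant term is exactly $\rho(2\delta-\alpha_1)\psi_0^{-}(qt_0t_1)+t_1^{\hf}\psi_0^{+}(qt_0t_1)+\psi_1^{+}(q^2t_0t_1)\psi_0^{-}(qt_0t_1)$; and the coefficient of $x$ is $t_1^{\hf}\bigl(\rho(2\delta-\alpha_1)+\psi_1^{+}(q^2t_0t_1)\bigr)$. The one step needing care is then to verify
\[
 \rho(2\delta-\alpha_1)+\psi_1^{+}(q^2t_0t_1)=t_1^{-\hf},
\]
which follows by putting both terms over the common denominator $1-q^2t_0t_1$ and simplifying using \eqref{eq:rho:n=1} and \eqref{eq:psi(z)}; equivalently, it is forced by the normalization (Fact~\ref{fct:Pmu}) that the coefficient of $x^\omega$ in $P_1(x)$ is $1$. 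Granting it, the coefficient of $x$ collapses to $1$, which gives the asserted formula.

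\textbf{Main obstacle.} The substantive part is the sign bookkeeping for the six rank-one alcove walks above --- in particular that the folding producing the leading $\psi_0^{-}(qt_0t_1)$ in $E_\omega$ is negative whereas the type-$0$ folding contributing to $E_{-\omega}$ at the alcove $s_1A$ is positive --- together with the cancellation identity for the $x$-coefficient, which is precisely what makes the $\rho$-factor of Theorem~\ref{thm:LR} reappear in the final answer. A slightly shorter variant avoids $E_{-\omega}$ entirely: compute $P_1=UE_\omega$ directly from Fact~\ref{fct:Pmu} with $U=t_1^{-\hf}+T_1$, use the Dunkl form \eqref{eq:Ticd} of $T_1$ to get $T_1x^\omega=t_1^{-\hf}x^{-\omega}+(u_1^{-\hf}-u_1^{\hf})$, and then rewrite the outcome with the same identity together with $\psi_0^{+}(z)-\psi_0^{-}(z)=u_1^{-\hf}-u_1^{\hf}$.
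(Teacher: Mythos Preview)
Your proof is correct and follows essentially the same approach as the paper: apply Lemma~\ref{lem:PmuEvmu} to write $P_1=\rho(2\delta-\alpha_1)E_\omega+E_{-\omega}$, compute each $E_{\pm\omega}$ via the Ram--Yip formula (Fact~\ref{fct:RY}), and then verify the identity $t_1^{\hf}\rho(2\delta-\alpha_1)+t_1^{\hf}\psi_1^{+}(q^2t_0t_1)=1$ to collapse the coefficient of $x$. Your write-up in fact supplies more detail on the sign bookkeeping for the six alcove walks than the paper does, and your closing alternative via $U E_\omega$ is a nice extra observation not present there.
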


\begin{proof}
Below we use the word \emph{non-symmetric Askey-Wilson polynomials} to mean 
non-symmetric Koornwinder polynomials (Fact \ref{fct:Emu}) in the rank $1$ case.
By Lemma \ref{lem:PmuEvmu}, we can rewrite $P_1(x)$ as a linear combination 
of non-symmetric Askey-Wilson polynomials $E_k(x)=E_{k\omega}(x)$, $k\in\bZ$.
The result is
\[
 P_1(x)=\rho(2\delta-\alpha_1)E_1(x)+E_{-1}(x).
\]
Next, using the Ram-Yip type formula (Fact \ref{fct:RY}), 
we can expand $E_1(x)$ and $E_{-1}(x)$ by monomials.
The results are
\[
 E_1(x) = t_1^\hf x+\psi^-_0(qt_0t_1), \quad 
 E_{-1}(x) = x^{-1}+t_1^\hf\psi_0^+(qt_0t_1)+t_1^\hf\psi^+_1(q^2t_0t_1)x
 +\psi_1^+(q^2t_0t_1)\psi_0^-(qt_0t_1).
\]
By these formulas, we have
\[
 P_1(x)= x^{-1}+(t_1^\hf\rho(2\delta-\alpha_1)+t_1^\hf\psi^+_1(q^2t_0t_1))x
+\psi^-_0(qt_0t_1)\rho(2\delta-\alpha_1)
+t_1^\hf\psi_0^+(qt_0t_1)
+\psi_1^+(q^2t_0t_1)\psi_0^-(qt_0t_1).
\]
By a direct calculation, the coefficients of $x$ is shown to be 
\[
 t_1^\hf\rho(2\delta-\alpha_1)+t_1^\hf\psi^+_1(q^2t_0t_1)=1.
\]
Therefore the claim is obtained.
\end{proof}

\begin{rmk}\label{rmk:AW}
Let us replace the parameters $(q,t_0,t_1,u_0,u_1)$ with 
the original parameters $(q,a,b,c,d)$ of Askey-Wilson polynomials in \cite{AW}.
The correspondence \eqref{eq:AW-K} of parameters can be rewritten as
\[
 (q,t_0,t_1,u_0,u_1) = (q,-q^{-1}a b, -c d, -a/b, -c/d).
\]
Using this correspondence and the relation $a b c d = q t_0 t_n$, 
we can rewrite $P_1(x)$ as 
 \[
 P_1(x) = x+x^{-1}+\frac{\pi s - s'}{1- \pi}, \quad 
 \pi := a b c d, \ s := a+b+c+d, \ s' := a^{-1}+b^{-1}+c^{-1}+d^{-1}.
\]
We can then compare $P_1(x)$ with the original Askey-Wilson polynomials $p_n(z)$ in \cite[p.5]{AW}.
By loc.\ cit., we have $p_1(z)= 2(1-\pi)z+\pi s-s'$, and thus
\[
 (1-\pi) P_1(x) = p_1\bigl((x+x^{-1})/2\bigr).
\]
Therefore they coincide up to the normalization factor.
\end{rmk}

\begin{prp}\label{prp:AW_P}
For a dominant weight $\lambda=l \omega \in(\frh_\bZ^*)_+$, $l\in\bN$, we have
\begin{align*}
 P_1(x)P_l(x) &=  P_{l+1}(x) +F_l P_l(x)+G_l P_{l-1}(x), \\
 F_l &:= \rho(-2l\delta+\alpha_1)(-\psi_0^-(q^{2l+1}t_0t_1)+\psi_0^-(q t_0t_1))+\rho(2l\delta-\alpha_1)
         (-\psi_0^+(q^{2l-1}t_0t_1)+\psi_0^+(q t_0t_1)), \\
 G_l &:= \rho(2l\delta-\alpha_1)\rho(-2(l-1)\delta+\alpha_1)n_0(q^{2l-1}t_0t_1).
\end{align*}
Here $\rho(\alpha)$ is given by \eqref{eq:rho:n=1},
$\psi^{\pm}_0(z)$ is given by \eqref{eq:psi:n=1}, and 
$n_0(z)$ is given in Proposition \ref{prp:SYiw} with $n=1$.
Explicitly, the last one is given by
\begin{align*}
&n_0(z) :=
 \frac{(1-u_1^{ \hf}u_0^\hf z^\hf)(1+u_1^{ \hf}u_0^{-\hf}z^\hf)}{1-z}
 \frac{(1+u_1^{-\hf}u_0^\hf z^\hf)(1-u_1^{-\hf}u_0^{-\hf}z^\hf)}{1-z}.
\end{align*}
\end{prp}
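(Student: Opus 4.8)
The plan is to specialize the general Littlewood-Richardson formula of Theorem~\ref{thm:LR} to the rank one case with $\lambda=\omega$ (the minuscule weight) and $\mu=l\omega$, and to enumerate the relevant colored alcove walks explicitly. Since $w(\omega)=s_0s_1$ has length $2$ and $W^\mu=W_0=\{e,s_1\}$ for $l\geq 1$ (as $\mu$ is regular), the sum $\sum_{v\in W^\mu}\sum_{p\in\Gamma^C_2(\oa{w(\omega)}^{-1},(vw(l\omega))^{-1})}$ ranges over a small and completely describable set: for each of the two choices of $v$, the type is a length-$2$ walk, and each of its two steps is either a crossing or a (black or gray) folding, so there are at most a handful of walks, further cut down by the dominant-chamber condition $p_i\in C$. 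First I would write down $w(l\omega)$ and its reduced expression, compute the two starting alcoves $(vw(l\omega))^{-1}A$, and list all colored walks $p$ in $\Gamma^C_2$, recording for each the data $e(p)$, $\wgt(p)$, $\wtt(p)$, the hyperplanes $h_k(p)$, and the step types. This is the bookkeeping core of the proof.

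Next I would evaluate the three coefficient families on these walks. The factor $A_p=\prod_{\alpha\in w(l\omega)^{-1}\cL(v^{-1},v_\mu^{-1})}\rho(\alpha)$ is $1$ when $v=e$ and a single $\rho$-factor when $v=s_1=v_\mu$; the set $\cL(v^{-1},v_\mu^{-1})$ in rank one is a singleton, and pushing it through $w(l\omega)^{-1}$ gives the affine root $\pm 2l\delta\mp\alpha_1$ that produces $\rho(\pm 2l\delta\mp\alpha_1)$. The factor $C_p=\prod_k C_{p,k}$ is read off step-by-step from the table in Proposition~\ref{prp:EPE}: crossings contribute $1$ (positive) or an $n_0$-factor (negative), gray foldings contribute $-\psi_0^{\pm}$ evaluated at $q^{\sh(-h_k(p))}t^{\hgt(-h_k(p))}$, and black foldings contribute $\psi_0^{\pm}$ evaluated at $q^{\sh(-\beta_k)}t^{\hgt(-\beta_k)}$, where in rank one $t^{\hgt(2j\ep+k\delta)}=(t_0t_n)^{j}$ so all arguments become explicit powers of $q$ times $t_0t_1$. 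The factor $B_p=\prod_{\alpha\in\cL(t(\wgt(p))w_0,\,e(p))}\rho(-\alpha)$ measures the descent of the endpoint into the dominant chamber: when the walk already ends dominantly it is empty ($B_p=1$), and otherwise it is a single $\rho(-\alpha)$-factor with $\alpha$ the unique separating affine root. Finally, $P_{-w_0.\wtt(p)}$ is $P_{l+1}$, $P_l$, or $P_{l-1}$ according to which alcove the walk ends in.

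Grouping the walks by their output weight yields the three-term expansion. Walks ending at weight $l+1$ come from the all-crossing walk, which carries $A_p=B_p=C_p=1$, giving the leading term $P_{l+1}(x)$ and confirming the expected monic normalization (consistent with the overall prefactor $\tfrac{1}{t_{w_\lambda}^{-\hf}W_\lambda(t)}$ being $1$ here since $W_\omega=\{e\}$). Walks ending at weight $l$ contribute $F_l$: there should be exactly two such colored walks (distinguished by which step folds and by the gray/black coloring, or by the choice of $v$), and their coefficients combine into the stated difference $\rho(-2l\delta+\alpha_1)(-\psi_0^-(q^{2l+1}t_0t_1)+\psi_0^-(qt_0t_1))+\rho(2l\delta-\alpha_1)(-\psi_0^+(q^{2l-1}t_0t_1)+\psi_0^+(qt_0t_1))$; here the two $\psi_0^{\pm}$-terms inside each parenthesis come from a gray-folding contribution and a black-folding contribution (or a crossing-vs-fold pairing) at different heights. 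Walks ending at weight $l-1$ contribute $G_l$: there is a single walk, necessarily containing a negative crossing (hence an $n_0$-factor $n_0(q^{2l-1}t_0t_1)$) together with two $\rho$-factors, namely $\rho(2l\delta-\alpha_1)$ from $A_p$ or $B_p$ and $\rho(-2(l-1)\delta+\alpha_1)$ from the other, giving $G_l=\rho(2l\delta-\alpha_1)\rho(-2(l-1)\delta+\alpha_1)n_0(q^{2l-1}t_0t_1)$.

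The main obstacle I expect is not any single computation but the careful matching of the abstract data $h_k(p)$, $\beta_k$, $\wgt(p)$, $e(p)$ to concrete affine roots and powers of $q$ for each walk, together with correctly tracking the gray/black coloring and the sign of each step, since the formula for $C_{p,k}$ branches six ways and a sign error or an off-by-one in the exponent ($q^{2l-1}$ versus $q^{2l+1}$) would be easy to make. A secondary subtlety is verifying that the dominant-chamber condition $p_i\in C$ correctly excludes exactly the walks that would otherwise overcount, and confirming that the small-$l$ edge cases ($l=1$, where $\mu-\omega=0$ is still dominant but $P_{l-1}=P_0=1$) are covered. Once the finite list of walks and their weights is pinned down, the identity follows by direct substitution and collecting like $P_\nu$-terms.
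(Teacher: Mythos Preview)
Your overall strategy---specialize Theorem~\ref{thm:LR} to $\lambda=\omega$, $\mu=l\omega$, enumerate the colored alcove walks, and read off $A_p,B_p,C_p$---is exactly what the paper does. However, two concrete errors in your setup would derail the computation.

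First, and most seriously, $w(\omega)\neq s_0s_1$. Recall that $w(\mu)$ is the \emph{shortest} element of the coset $t(\mu)W_0$. Here $t(\omega)=s_0s_1$, so $t(\omega)W_0=\{s_0s_1,\,s_0\}$, and the shortest element is $w(\omega)=s_0$, of length~$1$. Hence the walk type $\oa{w(\omega)}^{-1}$ is the single letter $(0)$, and each $p\in\Gamma^C_2(\oa{w(\omega)}^{-1},(vw(l\omega))^{-1})$ has exactly \emph{one} step, not two. The entire enumeration therefore consists of at most three colored walks per choice of $v$ (crossing, gray folding, black folding), i.e.\ at most six walks in total, matching the six terms visible in $1+F_l+G_l$. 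Your length-$2$ setup would produce many spurious walks and wrong exponents.

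Second, your assignment of $A_p$ is reversed. Since $W_\mu=\{e\}$ we have $W^\mu=W_0$ and the longest element $v_\mu=s_1$. Then $\cL(v^{-1},v_\mu^{-1})=\cL(e,s_1)=\{\alpha_1\}$ when $v=e$, while $\cL(s_1,s_1)=\emptyset$ when $v=s_1$. Thus $A_p$ contributes the single factor $\rho\bigl(w(l\omega)^{-1}.\alpha_1\bigr)=\rho(2l\delta-\alpha_1)$ when $v=e$, and $A_p=1$ when $v=s_1$---the opposite of what you wrote. (Compare Corollary~\ref{cor:AW_LW}.)

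Once these two points are corrected, the rest of your plan goes through: for $v=s_1$ the starting alcove is $t(l\omega)A$, the crossing lands at weight $l+1$ (contributing the leading $P_{l+1}$), and the gray/black foldings stay at weight $l$ with $C_p$-factors $-\psi_0^-(q^{2l+1}t_0t_1)$ and $\psi_0^-(qt_0t_1)$ respectively, each multiplied by $B_p=\rho(-2l\delta+\alpha_1)$. For $v=e$ the starting alcove is $w(l\omega)^{-1}A$, the crossing is now a \emph{negative} crossing landing at weight $l-1$ (producing the $n_0(q^{2l-1}t_0t_1)$ factor and $B_p=\rho(-2(l-1)\delta+\alpha_1)$, hence $G_l$), and the two foldings give the remaining half of $F_l$.
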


\begin{proof}
By Theorem \ref{thm:LR}, we have
\[
 P_\lambda(x) P_\mu(x) =
 \frac{1}{t_{w_\lambda}^{-\hf}W_\lambda(t)}\sum_{v\in W^\mu} 
 \sum_{p\in\Gamma^C_2(\oa{w(\lambda)}^{-1},(v w(\mu))^{-1})} 
 A_p B_p C_p P_{-w_0.\wtt(p)}(x)
\]
for dominant weights $\lambda,\mu \in (\frh_\bZ^*)_+$. 
We apply this equation to the case $\lambda=\omega$ and $\mu=l\omega$.
In this case the stabilizer $W_\mu \subset W_0$ in \eqref{eq:W_mu} 
is $W_\mu=\pr{e}$, and the complete system $W^\mu$ \eqref{eq:W^mu} 
of representatives of $W_0/W_\lambda$ is $W^{l\omega}=\pr{e,s_1}$.
As for the shortest element $w(\nu) \in t(\nu)W_0$ given in \eqref{eq:w(mu)},
we have by $t(\omega)=s_0s_1$ that $w(\omega)=s_0$ and $w(l\omega)=(s_0s_1)^{l-1}s_0$.
 
First, we calculate the denominator $t_{w_\omega}^{-\hf}W_\omega(t)$.
As for the longest element $w_\lambda \in W^\lambda$ in \eqref{eq:wmu}, 
we have $w_\omega=e$.
Thus, by recalling the definition \eqref{eq:t_w} of $t_w$ ($w \in W$), we have
 $t_{w_\omega}^{-\hf}W_\omega(t)= t_{e}^{-\hf} t_e= 1$.

Next, as for the sum in the right hand side, we calculate the case $v=s_1$.
The set of alcove walks is then
$\Gamma_2^C(\oa{w(\lambda)}^{-1},(v w(\mu))^{-1})=\Gamma_2^C(\oa{s_0},t(l\omega))$.
In the upper half of Table \ref{tab:AW_P}, we display the alcove walks $p$ therein 
together with the corresponding terms $A_p$, $B_p$ and $C_p$. 
In the table we denote by $H_0$ and $H_1$ 
the hyperplanes in the $W$-orbits of $H_{\alpha_0}$ and $H_{\alpha_1}$ respectively.
We also denote a black folding by a solid line, 
and a gray folding by a dotted line.

Next we study the case $v=e$.
The set of alcove walks is 
$\Gamma_2^C(\oa{w(\lambda)}^{-1},(v w(\mu))^{-1})=\Gamma_2^C(\oa{s_0},w(l\omega))$,
and in the lower half of Table \ref{tab:AW_P} we display the alcove walks $p$ therein 
together with the corresponding terms $A_p$, $B_p$ and $C_p$.

\begin{table}[htbp]
\centering
\includegraphics[bb= 0 0 445 352]{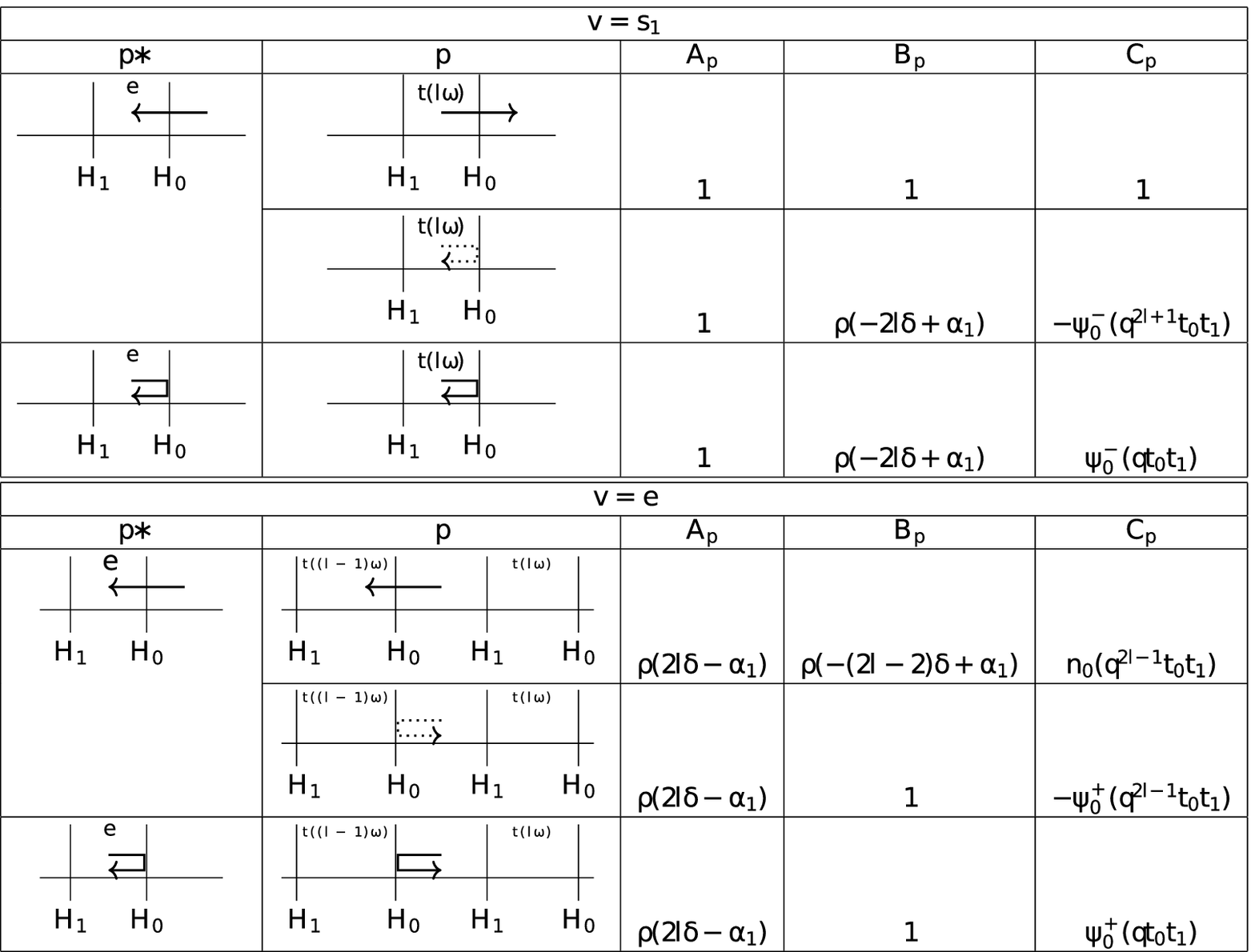}
\caption{Colored alcove walks in Proposition \ref{prp:AW_P}}
\label{tab:AW_P}
\end{table}
The claim is obtained by to summaries the above calculation.
\end{proof}

\begin{rmk}\label{rmk:AW2}
Continuing Remark \ref{rmk:AW}, 
we rewrite the result in Proposition \ref{prp:AW_P}
in terms of the original parameters $(q,a,b,c,d)$ of Askey-Wilson polynomials.
The result is
\begin{align}\label{eq:AW2:rec}
 P_1(x)P_l(x) = P_{l+1}(x) + F_l P_l(x) + G_{l-1} P_{l-1}(x),
\end{align}
where the factors $F_l$ and $G_l$ are given by 
\begin{align*}
&F_l := \frac{f_l+(\pi s'-s)}{1-\pi}, \quad 
 f_l := q^{l-1}\frac{(1+q^{2l-1} \pi)(q s+\pi s')-q^{l-1}(1+q)\pi(s+q s')}
               {(1-q^{2l-2}\pi)(1-q^{2l}\pi)}, \\
&G_l := \frac{g_l \gamma_{l-1}}{\gamma_1 \gamma_l}, \quad 
 g_l := (1-q^l) \frac{(1-q^{l-1} a b)(1-q^{l-1} a c)(1-q^{l-1} a d)
                      (1-q^{l-1} b c)(1-q^{l-1} b d)(1-q^{l-1} c d)}
                     {(1-q^{2l-2} \pi)(1-q^{2l-1} \pi)}, \\
&\pi:=a b c d, \quad s:= a+b+c+d, \quad s':=a^{-1}+b^{-1}+c^{-1}+d^{-1}, \\ 
&\gamma_l:=(q^{l-1} \pi;q)_l = (1-q^{l-1} \pi)(1-q^l \pi) \cdots (1-q^{2l-2} \pi).
\end{align*}
In the case $l=0$, we have $\rho(-\alpha_1)=0$, and thus $F_1=0$.
If we define $p_l(z)$ by the relation $P_l(x) = \gamma_l^{-1} p_l((x+x^{-1})/2)$,
then the relation \eqref{eq:AW2:rec} can be rewritten as 
\[
 2 z p_l(z) = h_l p_{l+1}(z) + f_l p_l(z) + g_l p_{l-1}(z), \quad 
 h_l := \frac{1-q^{l-1}\pi}{(1-q^{2l-1}\pi)(1-q^{2l}\pi)}, \quad
 p_0(z)=1, \ p_{-1}(z)=0.
\]
This recurrence formula is nothing but the one in \cite[(1.24)--(1.27)]{AW}.
Thus $p_l$ coincides with the original Askey-Wilson polynomial in \cite{AW},
and in particular, it can be expressed as a $q$-hypergeometric series.
\end{rmk}

So far we studied Pieri coefficients.
Next we study the general LR coefficients for Askey-Wilson polynomials. 

\begin{cor}\label{cor:AW_LW}
For dominant weights $l\omega$ and $m \omega$ in $(\frh^*_\bZ)_+$, 
$l,m \in\bN$, we have 
\[
 P_{l \omega}(x) P_{m \omega}(x) = 
 \sum_{v\in W_0} \sum_{p \in\Gamma^C_2(\oa{t((l-1)\omega)s_0},t(m\omega)s_1v)}
 A^{AW}_p B_p^{AW} C_p P_{\wgt(p)}(x),
\]
where the terms $A^{AW}_p$ and $B^{AW}_p$ are given by 
\begin{align*}
 A^{AW}_p:=
 \begin{dcases}
  \rho(2m\delta-\alpha_1) & (v=e) \\ 
  1 & (v=s_1) 
 \end{dcases},\quad
 B^{AW}_p:=
 \begin{dcases}
  \rho(-\ell(e(p))\delta+\alpha_1) & (\ell(e(p)) \in 2\bZ)\\
  1 & (\ell(e(p)) \notin 2\bZ)
 \end{dcases}
\end{align*}
with $\rho(\alpha)$ in Proposition \ref{prp:AW_P},
and $C_p$ is given in Theorem \ref{thm:LR}.
\end{cor}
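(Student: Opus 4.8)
The plan is to obtain Corollary~\ref{cor:AW_LW} by specializing Theorem~\ref{thm:LR} to the rank one root system, with $\lambda=l\omega$ and $\mu=m\omega$; we may assume $l,m\ge 1$, the cases $l=0$ or $m=0$ being trivial since then one factor is $P_0(x)=1$. First I would record the rank one combinatorial data. Since $l\omega,m\omega$ are nonzero dominant weights, their stabilizers in $W_0=\{e,s_1\}$ are trivial, so $W^\mu=W_0=\{e,s_1\}$, the longest representative is $v_\mu=s_1$, and $w_\lambda=e$; hence the normalizing factor in Theorem~\ref{thm:LR} equals $t_{w_\lambda}^{-\hf}W_\lambda(t)=t_e^{-\hf}t_e=1$. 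Moreover $w_0=s_1$ acts as $-\id$ on $\frh_\bZ^*$ in rank one, so $-w_0.\wtt(p)=\wtt(p)=\wgt(p)$ and the output polynomials are $P_{\wgt(p)}(x)$.

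Next I would rewrite the two index data of the alcove-walk sums. From $t(\omega)=s_0s_1$ one gets the reduced words $w(l\omega)=(s_0s_1)^{l-1}s_0=t((l-1)\omega)s_0$ and $w(m\omega)=(s_0s_1)^{m-1}s_0=t(m\omega)s_1$, both palindromes, hence self-inverse as group elements. Therefore $\oa{w(\lambda)}^{-1}=\oa{w(l\omega)}=\oa{t((l-1)\omega)s_0}$, which is the type occurring in the corollary, while $(vw(\mu))^{-1}=w(m\omega)^{-1}v^{-1}=t(m\omega)s_1\,v^{-1}$. Since every element of $W_0$ is an involution, the substitution $v\mapsto v^{-1}$ permutes $W_0$, and under it the summation range ``$v\in W^\mu$, $p\in\Gamma^C_2(\oa{w(\lambda)}^{-1},(vw(\mu))^{-1})$'' turns into ``$v\in W_0$, $p\in\Gamma^C_2(\oa{t((l-1)\omega)s_0},t(m\omega)s_1 v)$'', as claimed.

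It remains to evaluate $A_p$ and $B_p$ in this setting, the factor $C_p$ being carried over verbatim. For $A_p=\prod_{\alpha\in w(\mu)^{-1}\cL(v^{-1},v_\mu^{-1})}\rho(\alpha)$: when $v=v_\mu=s_1$ the set $\cL(s_1,s_1)$ is empty, so $A_p=1$; when $v=e$ one has $\cL(e,s_1)=\{\alpha_1\}$, and $w(m\omega)^{-1}.\alpha_1=(t(m\omega)s_1).\alpha_1=-\,t(m\omega).\alpha_1=2m\delta-\alpha_1$ (from $s_1.\alpha_1=-\alpha_1$ and $t(k\omega).\alpha_1=\alpha_1-2k\delta$), giving $A_p=\rho(2m\delta-\alpha_1)=A_p^{AW}$. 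For $B_p=\prod_{\alpha\in\cL(t(\wgt(p))w_0,e(p))}\rho(-\alpha)$, write $e(p)=t(\wgt(p))\dir(p)$ with $\dir(p)\in\{e,s_1\}$: if $\dir(p)=s_1=w_0$ then $t(\wgt(p))w_0=e(p)$, the set is empty, and $B_p=1$, which is exactly the case $\ell(e(p))\notin2\bZ$; if $\dir(p)=e$ and $\wgt(p)\ne0$ then $\ell(e(p))=\ell(t(\wgt(p)))\in2\bZ$, and by \eqref{eq:cL:trs} plus a short affine computation $\cL(t(\wgt(p))s_1,t(\wgt(p)))=\{\ell(e(p))\delta-\alpha_1\}$, so $B_p=\rho(-\ell(e(p))\delta+\alpha_1)$; in the remaining degenerate case $e(p)=e$ (possible only when $l=m$, and responsible for the constant term $P_0$) one reads off directly from Fact~\ref{fct:Pmu} that $UE_0(x)1=t_{w_0}^{-\hf}W_0(t)P_0(x)$ with $t_{w_0}^{-\hf}W_0(t)=t_1^{\hf}+t_1^{-\hf}=\rho(\alpha_1)=\rho(-\ell(e(p))\delta+\alpha_1)$, again matching $B_p^{AW}$. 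Substituting into Theorem~\ref{thm:LR} yields the corollary.

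The computations are entirely elementary; the points needing care are the affine-Weyl evaluation $w(m\omega)^{-1}.\alpha_1=2m\delta-\alpha_1$ and the identification of the single separating root in $\cL(t(\wgt(p))w_0,e(p))$, the relabeling of the summation index sets via palindromy (where an inversion or a sign slip is easy to make), and — the one genuinely delicate point — the degenerate walks ending at the fundamental alcove, for which the general formula of Theorem~\ref{thm:LR} must be interpreted with the shortest coset representative $w(\wgt(p))$ in place of the literal $t(\wgt(p))w_0$. I expect that degenerate case to be the main obstacle; everything else is bookkeeping.
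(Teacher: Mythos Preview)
Your proposal is correct and follows essentially the same route as the paper: specialize Theorem~\ref{thm:LR} to rank one, identify $W^\mu=W_0$, $v_\mu=s_1$, the normalizing constant $t_{w_\lambda}^{-\hf}W_\lambda(t)=1$, rewrite $\oa{w(\lambda)}^{-1}$ and $(vw(\mu))^{-1}$ via the palindromic reduced words, and then compute $A_p$ and $B_p$ by evaluating the relevant $\cL$-sets. The paper does exactly this, case-splitting on $v\in\{e,s_1\}$ for $A_p$ and on the parity of $\ell(e(p))$ for $B_p$.

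The one place where your write-up goes beyond the paper is the degenerate endpoint $e(p)=e$. The paper simply writes $\cL(t(k\omega)s_1,t(k\omega))=\{2k\delta-\alpha_1\}$ for all $0\le k$ and reads off $B_p=\rho(-2k\delta+\alpha_1)$; for $k=0$ this identification of the separating root is not literally correct (the positive root is $\alpha_1$, not $-\alpha_1$), but the resulting value $\rho(\alpha_1)$ happens to be the right one. You instead compute $UE_0(x)=t_{w_0}^{-\hf}W_0(t)P_0(x)$ directly and check $t_{w_0}^{-\hf}W_0(t)=t_1^{\hf}+t_1^{-\hf}=\rho(\alpha_1)$, which is a cleaner justification and correctly flags that the literal $t(\wgt(p))w_0$ in the statement of $B_p$ must be read as the shortest coset representative when $\wgt(p)$ is non-regular. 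That extra care is warranted; everything else is, as you say, bookkeeping.
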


\begin{proof}
We apply the formula 
\[
 P_\lambda(x) P_\mu(x) =
 \frac{1}{t_{w_\lambda}^{-\hf}W_\lambda(t)}\sum_{v\in W^\mu} 
 \sum_{p \in \Gamma^C_2(\oa{w(\lambda)}^{-1},(v w(\mu))^{-1})} 
 A_p B_p C_p P_{-w_0.\wtt(p)}(x)
\]
in Theorem \ref{thm:LR} to the case $\lambda=l\omega$ and $\mu=m\omega$.
Similarly as in Proposition \ref{prp:AW_P}, 
we have $W_{l\omega}=\pr{e}$ and $W^{m\omega}=\pr{e,s_1}=W_0$.
Using $t(l \omega)=(s_0s_1)^l$ and $t(m \omega)=(s_0s_1)^m$, we have
$w(l \omega)=t(l \omega)s_1=(s_0s_1)^{l-1} s_0$ and 
$w(m \omega)=(s_0s_1)^{m-1}s_0$.
Therefore the range of the sum of alcove walks in the right hand side becomes
\[
 \Gamma^C_2(\oa{w(\lambda)}^{-1},(v w(\mu))^{-1})=
 \Gamma^C_2(\oa{t((l-1)\omega)s_0}, t(m\omega)s_1v) \quad (v\in W_0).
\]
As for the denominator $t_{w_{l \omega}}^{-\hf}W_{l \omega}(t)$, we have 
by $w_{l \omega}=e$ that $t_{w_{l \omega}}^{-\hf}W_{l \omega}(t)= t_{e}^{-\hf} t_e= 1$.

Now we study the factors $A_p$ and $B_p$, and want to reduce the ranges of the products.
First, as for the product 
$A_p=\prod_{\alpha\in\cL((vw(\mu))^{-1},t(-w_0\mu))}\rho(\alpha)$, 
the longest element $w_0 \in W_0$ is $s_1$ and $t(\mu)=(s_0 s_1)^l=w(\mu)^{-1}s_0$.
Thus, in the case $v=e$, we have 
\[
 \cL((vw(\mu))^{-1},t(-w_0\mu))
=\cL(w(\mu)^{-1},t(\mu))
=\pr{2m\delta-\alpha_1}.
\]
In the case $v=s_1$, we have 
\[
 \cL((vw(\mu))^{-1},t(-w_0\mu))
=\cL(w(\mu)s_1,t(\mu))
=\cL(t(\mu),t(\mu))
=\emptyset.
\]
Hence $A_p$ is equal to $A_p^{AW}$ in the claim.

Next we consider the product 
$B_p = \prod_{\alpha\in \cL(t(\wgt(p))w_0, e(p))} \rho(-\alpha)$.
we separate the argument according to whether 
the length $\ell(e(p))$ of $e(p)$ is even or odd.
In the case $\ell(e(p))$ is even, there is $k \in \bN$ such that 
$e(p)=(s_0 s_1)^{n}=t(k\omega)$, $0 \le k \le m$.
In this case, the range of the product is
\[
 \cL(t(\wgt(p))w_0, e(p))=\cL(t(k\omega)s_1,t(k\omega))=\pr{2k\delta-\alpha_1}.
\]
In the case $\ell(e(p))$ is odd, there is $k \in \bN$ such that 
we can write $e(p)=(s_0 s_1)^{k-1}s_0=t(k\omega)s_1 $, $1 \le k \le m$.
Thus the range of the product is
\[
 \cL(t(\wgt(p))w_0, e(p))=\cL(t(k\omega)s_1,t(k\omega)s_1)=\emptyset.
\]
Therefore $B_p$ is equal to $B_p^{AW}$ in the claim.
\end{proof}

\subsection{Hall-Littlewood limit}\label{ss:sp:HL}

In the case of type $A_n$, the specialized Macdonald polynomials 
$P^{A_n}_\lambda(x;q=0,t)$ coincide with Hall-Littlewood polynomials.
Motivated by this fact, Yip calls in \cite[\S 4.5]{Y} the specialized Macdonald polynomials 
in the untwisted cases at $q=0$ \emph{Hall-Littlewood polynomials}, 
and derived a simplified formula of LR coefficients.  
Following Yip's terminology, let us call the specialized Koornwinder polynomials
\[
 P_\lambda(x;t) := P_\lambda(x;q=0,t_0,t,t_n,u_0,u_n)
\]
\emph{the Hall-Littlewood limit}.

\begin{prp}[{c.f.\ \cite[Corollary 4.13]{Y}}]\label{prp:HL}
Let us given dominant weights $\lambda,\mu \in (\frh_\bZ^*)_+$ and a reduced expression
$w(\lambda)=s_{i_r}\cdots s_{i_1}$ of the shortest element $w(\lambda)$ \eqref{eq:w(mu)}.
Then we have
\begin{align*}
   P_\lambda(x;t) P_\mu(x;t)
&= \frac{1}{t_{w_\lambda}^{-\hf}W_\lambda(t)}\sum_{v\in W^\mu} 
   \sum_{p\in\Gamma^C_+(\oa{w(\lambda)}^{-1},(v w(\mu))^{-1})} 
   F_p(t)  P_{-w_0.\wtt(p)}(x;t), \\
   F_p(t) 
&:=\prod_{\alpha\in\cL((vw(\mu))^{-1},t(-w_0\mu))} t_\alpha^\hf
   \prod_{\alpha\in\cL(t(\wgt(p))w_0,e(p))} t_\alpha^{-\hf} \\
&  \quad \times
   \prod_{k\in\varphi_+(p), \ \alpha_{i_k}\not\in W.\alpha_0} 
   (t_{\alpha_{i_k}}^{-\hf}-t_{\alpha_{i_k}}^\hf) 
   \prod_{k\in\varphi_+(p), \ \alpha_{i_k}\in W.\alpha_0}
   (u_n^{-\hf}-u_n^\hf) .
\end{align*}
Here $\Gamma^C_+(\oa{w(\lambda)}^{-1},(v w(\mu))^{-1})$ is the subset
of $\Gamma^C(\oa{w(\lambda)}^{-1},(v w(\mu))^{-1})$ 
consisting of alcove walks whose foldings are positive.
\end{prp}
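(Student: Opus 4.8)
The plan is to take the $q \to 0$ limit termwise in the formula of Theorem~\ref{thm:LR}, keeping careful track of which colored alcove walks survive and how the coefficients $A_p$, $B_p$, $C_p$ degenerate. The key observation, exactly as in Yip's argument for the untwisted case \cite[\S 4.5]{Y}, is that the $q=0$ specialization is harmless: the Koornwinder polynomials $P_\lambda(x)$ have coefficients in $\bK$, which are rational functions in $q^{\hf}$, and they are regular at $q=0$ (this follows, e.g., from the Ram--Yip formula, Fact~\ref{fct:RY}, since $\sh(-\beta_k) \le 0$ along a reduced walk so only non-negative powers of $q$ appear, and from Lemma~\ref{lem:PmuEvmu} similarly). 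Hence one may simply substitute $q=0$ into both sides of Theorem~\ref{thm:LR}.

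First I would examine the factor $C_p = \prod_k C_{p,k}$. Inspecting the six cases in Proposition~\ref{prp:EPE}, at $q=0$ the arguments of $\psi^{\pm}_{i_k}$ and $n_{i_k}$ become $q^{\sh(\cdot)}t^{\hgt(\cdot)}$ with $\sh(\cdot)\le 0$, so the argument vanishes precisely when $\sh(-h_k(p))<0$ (equivalently $\sh(-\beta_k)<0$) and equals $t^{\hgt(\cdot)}$ otherwise. Now $\psi^+_i(0) = \mp(\text{something})$: from \eqref{eq:psi(z)}, $\psi^+_i(0) = -(t_i^{\hf}-t_i^{-\hf})$ for $0<i<n$, $\psi^+_0(0) = -(u_n^{\hf}-u_n^{-\hf})$, $\psi^+_n(0) = -(t_n^{\hf}-t_n^{-\hf})$; whereas $\psi^-_i(z) = \mp(\text{num})/(1-z^{-1}) \to 0$ as $z\to 0$, and $n_i(0)=1$. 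Therefore: a negative crossing contributes $n_{i_k}(0)=1$; a gray negative folding contributes $-\psi^-_{i_k}(0)=0$, killing the walk; a black folding with negative $\sh$ contributes a power of $q$ which is not zero only if $\sh(-\beta_k)=0$, but a positive folding with $\sh(-\beta_k)=0$ means the step lies on the boundary hyperplane $H_{\alpha_{i_k}}$ itself — this needs a short geometric argument to rule out, leaving only walks all of whose foldings are \emph{positive}; and such a positive black folding then contributes $\psi^+_{i_k}(q^{\sh(-\beta_k)}t^{\hgt(-\beta_k)})$, which at $q=0$ is $\psi^+_{i_k}(0)$ when $\sh(-\beta_k)=0$ and is again a nonzero limit only in that case. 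Collating, the surviving walks are exactly those in $\Gamma^C_+$, gray foldings disappear, and each positive folding contributes the factor $-(t_{\alpha_{i_k}}^{\hf}-t_{\alpha_{i_k}}^{-\hf}) = (t_{\alpha_{i_k}}^{-\hf}-t_{\alpha_{i_k}}^{\hf})$ or $(u_n^{-\hf}-u_n^{\hf})$ according to whether $\alpha_{i_k}\in W.\alpha_0$ or not. This yields the third and fourth products in $F_p(t)$.

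Next I would handle $A_p$ and $B_p$. These are products of $\rho(\pm\alpha)$, and from Lemma~\ref{lem:PmuEvmu} one has $\rho(\alpha)\big|_{q=0}$: if $\sh(-\alpha)=0$ the fraction $\frac{1-t^{-1}t^{\hgt}}{1-t^{\hgt}}$ (resp.\ the $t_n$-version) may remain, but using $\cL$-transition formulas \eqref{eq:cL:trs} one checks $\hgt(-\alpha)=0$ for all $\alpha$ in the relevant $\cL$-sets attached to $t(-w_0\mu)$ and to $t(\wgt(p))w_0$ (these are the hyperplanes fixed by the stabilizer actions), so $\rho(\alpha)\big|_{q=0,\hgt=0}$ collapses to $t^{\hf}$ or $t_n^{\hf}$, i.e.\ to $t_{\alpha_{i_k}}^{\hf}$; and when $\sh(-\alpha)<0$ we get a positive power of $q$, hence $0$ at $q=0$ — but here the relevant $\alpha$ all have $\sh=0$ by the same $\cL$-analysis, so nothing is killed. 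This produces $\prod t_\alpha^{\hf}$ from $A_p$ and $\prod t_\alpha^{-\hf}$ from $B_p$ (the sign of the exponent coming from $\rho(\alpha)$ versus $\rho(-\alpha)$), giving the first two products in $F_p(t)$. Finally $\Gamma^C_2 \rightsquigarrow \Gamma^C_+$ because all foldings are now forced positive (no coloring survives), and assembling the four product factors gives $F_p(t)$.

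\textbf{The main obstacle} I anticipate is the bookkeeping needed to show that the $q=0$ limit of the coefficients is precisely as stated with no hidden cancellations or vanishings beyond those in $\Gamma^C_+$ — in particular, verifying carefully that for the $\cL$-sets $\cL((vw(\mu))^{-1},t(-w_0\mu))$ and $\cL(t(\wgt(p))w_0,e(p))$ every constituent affine root $\alpha$ has $\sh(-\alpha)=0$ and $\hgt(-\alpha)=0$ (so that $\rho(\alpha)\big|_{q=0}$ is simply $t_\alpha^{\pm\hf}$), and ruling out the degenerate "folding on $H_{\alpha_{i_k}}$" case. This is where I would spend the bulk of the proof, invoking the structure of $w(\mu)$ as the shortest coset representative and the translation-invariance \eqref{eq:cL:trs} of the $\cL$-construction; everything else is a routine specialization of Theorem~\ref{thm:LR}.
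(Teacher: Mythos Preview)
Your overall strategy---specialize Theorem~\ref{thm:LR} at $q=0$ and track which colored walks survive---is exactly the paper's. But the analysis of which foldings survive is inverted, and the mechanism you propose for killing black foldings does not work.

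The surviving foldings are the \emph{gray positive} ones, not the black ones. A gray negative folding contributes $-\psi^-_{i_k}(0)=0$, as you say. For black foldings, however, a black step at which $p^*$ is \emph{positive} contributes $\psi^+_{i_k}(q^{\sh(-\beta_k)}t^{\hgt(-\beta_k)})$, and since $\beta_k\in S_+$ we have $\sh(-\beta_k)\ge0$, so at $q=0$ this tends to $\psi^+_{i_k}(0)\neq0$; it does \emph{not} vanish. The paper's argument is different and global rather than step-by-step: if $p$ has any black folding then $p^*$ is a non-trivial walk in $\Gamma(\oa{w(\lambda)},e)$, and every alcove walk of reduced type beginning at $e$ which is not the straight crossing walk must contain at least one \emph{negative} folding. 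Hence some factor $C_{p,j}=\psi^-_{i_j}(\cdot)$ appears in the product and kills it at $q=0$. This is the missing idea. Once one knows all surviving foldings are gray and positive, each contributes $-\psi^+_{i_k}(0)$, giving the stated $(t_{\alpha_{i_k}}^{-\hf}-t_{\alpha_{i_k}}^{\hf})$ or $(u_n^{-\hf}-u_n^{\hf})$ according to the orbit.

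A smaller point: for $A_p$ and $B_p$ you assert $\sh(-\alpha)=0$ and $\hgt(-\alpha)=0$ on the relevant $\cL$-sets. That is not correct; these $\alpha$ are genuinely affine with positive $\delta$-component (one checks this using that $v\in W^\mu$ forces $\langle\alpha,\mu\rangle>0$ for $\alpha\in\cL(v^{-1},v_\mu^{-1})$), so rather $\sh(-\alpha)>0$ and $q^{\sh(-\alpha)}\to0$, whence $\rho(\alpha)\to t_\alpha^{\hf}$ directly. The paper, to be fair, asserts these limits without detailed justification.
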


\begin{proof}
We denote the coefficient in Theorem \ref{thm:LR} by 
\[
 a_p(q,t) := A_p B_p C_p. 
\]

First, we show that if $a_p(0,t)\neq0$ for a colored alcove walk 
$p \in \Gamma_2^C(\oa{w(\lambda)}^{-1},(v.w(\mu))^{-1})$,
then all the foldings of $p$ are gray and positive.
We assume that the $k$-th step of $p$ is a gray negative folding.
Then, as for the factor $C_{p,k}=-\psi^-_{i_k}(q^{\sh(-h_k(p))}t^{\hgt(-h_k(p))})$
we have $\rst{C_{p,k}}{q=0}=0$.
In fact, we have 
\[
 \psi_{i_k}^-(z)
=\frac{(t_{i_k}^\hf-t_{i_k}^{-\hf})+z^{-\hf}(u_{i_k}^\hf-u_{i_k}^{-\hf})}{1-z^{-1}}
=\frac{z(t_{i_k}^\hf-t_{i_k}^{-\hf})+z^{\hf}(u_{i_k}^\hf-u_{i_k}^{-\hf})}{1-z},
\]
and by substituting $z=q^{\sh(-h_k(p))}t^{\hgt(-h_k(p))}$ and $q=0$ 
we have $\rst{C_{p,k}}{q=0}=0$.
Thus we showed that no gray negative folding contributes to $a_p(0,t)$.

Next we show that black foldings of $p$ don't contribute to $a_p(0,t)$.
Note that there exists an alcove walk $l \in \Gamma(\oa{w(\lambda)},e)$ 
whose steps are crossings since we fixed a reduced expression of $w(\lambda)$.
Moreover all the steps of $l$ are positive.
Then we find that any alcove walk in $\Gamma_2^C(\oa{w(\lambda)},e) \setminus \{l\}$
has a negative folding. 
In other words, if an alcove walk 
$p \in \Gamma_2^C(\oa{w(\lambda)}^{-1},(v w(\mu))^{-1})$ has a black folding, 
then $p^*$ in \eqref{eq:p*} has at least one negative folding.
Then, as for the factor $C_{p,k}=-\psi^-_{i_k}(q^{\sh(-\beta_k)}t^{\hgt(-\beta_k)})$,
we have $\rst{C_{p,k}}{q=0}=0$ by a direct calculation.
Thus, no black folding contributes to $a_p(0,t)$.

By the discussion so far, we find that neither colored folding contributes to $a_p(0,t)$.
Thus, the set of alcove walks effective to the sum is 
$\bigl\{p\in\Gamma^C(\oa{w(\mu)}^{-1},(v.w(\lambda))^{-1}) \mid \varphi(p)=\varphi_+(p)\bigr\}$.
 
Specializing $q=0$ in $A_p$, $B_p$ and $C_p$, we have
\begin{align*}
\rst{A_p}{q=0}
&= \prod_{\alpha\in\cL((vw(\mu))^{-1},t(-w_0\mu))} t_\alpha^\hf,\quad 
\rst{B_p}{q=0}
 = \prod_{\alpha\in\cL(t(\wgt(p))w_0,e(p))} t_\alpha^{-\hf}, \\
\rst{C_p}{q=0}
&= \prod_{k\in\varphi_+(p), \ \alpha_{i_k}\not\in W.\alpha_0} 
   (t_{\alpha_{i_k}}^{-\hf}-t_{\alpha_{i_k}}^\hf) 
   \prod_{k\in\varphi_+(p), \ \alpha_{i_k}\in W.\alpha_0} (u_n^{-\hf}-u_n^\hf).
\end{align*}
Therefore the claim is obtained. 
\end{proof}

\subsection{Examples in rank $2$}\label{ss:rank2}

Finally, as explicit examples of LR coefficients in Theorem \ref{thm:LR}, 
we calculate the product $P_\lambda(x)P_\mu(x)$ of Koornwinder polynomials of rank $2$.

We write down the root system of rank $2$.
{The} root system of type $C_2$ is 
\[
 R:=\pr{\pm\ep_1\pm\ep_2}\cup\pr{\pm2\ep_1,\pm2\ep_2} \subset V^* := \bR\ep_1\oplus \bR\ep_2,
\] 
the simple roots are $\alpha_1=\ep_1-\ep_2$ and $\alpha_2=2\ep_2$,
and the fundamental weights are $\omega_1=\ep_1$ and $\omega_2=\ep_1+\ep_2$.
The weight lattice is $P = \frh_\bZ^* = \bZ\ep_1\oplus \bZ\ep_2 \subset V^*$, and 
the set of dominant weights is $(\frh_\bZ^*)_+ = \pr{\lambda_1\ep_1+\lambda_2\ep_2 \in 
 \frh^*_\bZ \mid \lambda_1\geq\lambda_2\geq0}$.
The finite Weyl group $W_0$ is the hyper-octahedral group of order $8$ generated 
by $s_1:=s_{\alpha_1}$ and $s_2:=s_{\alpha_2}$. 
The longest element of $W_0$ is $w_0=s_1s_2s_1s_2=s_2s_1s_2s_1$.

The affine root system of type $(C_2^\vee,C_2)$ is
\[
 S=\bigl\{ \pm2\ep_i+k\delta, \pm\ep_i+\hf k \delta \mid k \in \bZ, i=1,2 \bigr\}
   \cup\pr{\pm\ep_1\pm\ep_2+k\delta \mid k\in\bZ},
\]
and the affine simple root is $\alpha_0=\delta-2\ep_1$.
The extended affine Weyl group $W$ is generated by $s_1, s_2$ and $s_0:=s_{\alpha_0}$,
and the decomposition $W=t(P) \rtimes W_0$ \eqref{eq:exafW} is a semi-direct product of 
$t(P)=\langle t(\ep_1), t(\ep_2) \rangle \simeq \bZ^2$
and $W_0=\langle s_1,s_2 \rangle \simeq \pr{\pm1}^2\rtimes\frS_2$.
The elements $t(\ep_1)$ and $t(\ep_2)$ have reduced expressions 
$t(\ep_1)=s_0s_1s_2s_1$ and $t(\ep_2)=s_1s_0s_1s_2$ respectively.

In this setting we apply Theorem \ref{thm:LR} to the case $\lambda=\omega_1$ and $\mu=\omega_2$.
The result is as follows.

\begin{prp}\label{prp:rk2}
For Koornwinder polynomials of rank $2$, we have
\begin{align*}
 P_{\omega_1}(x)P_{\omega_2}(x) 
  &= 
    P_{\omega_1+\omega_2}(x) + F P_{\omega_2}(x)  + G P_{\omega_1}(x) , \\
F &:= \rho(-2\delta+(\ep_1+\ep_2))\rho(-2\delta+2\ep_1)\rho(-(\ep_1-\ep_2))(-\psi_0^-(q^3 t_0t_1)+\psi_0^-(q t_0t_1))
\\
G &:=  \rho(2\delta-(\ep_1+\ep_2))\rho(2\delta-2\ep_2)\rho(-2\ep_2)\rho(-\delta+(\ep_1+\ep_2))n_0(qt_0t_1)
\end{align*}
\end{prp}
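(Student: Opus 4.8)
The plan is to specialise Theorem~\ref{thm:LR} to $\lambda=\omega_1$ and $\mu=\omega_2$ and then to evaluate every ingredient of its right-hand side by hand, in exactly the style of the rank one computation in Proposition~\ref{prp:AW_P} (a table of colored alcove walks listing the factors $A_p$, $B_p$, $C_p$).

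First I would assemble the combinatorial data of the $C_2$ and $(C_2^\vee,C_2)$ root systems. Since $s_2.\omega_1=\omega_1$ and $s_1.\omega_2=\omega_2$, the stabilisers are $W_{\omega_1}=\{e,s_2\}$ and $W_{\omega_2}=\{e,s_1\}$, so $w_{\omega_1}=s_2$, the overall prefactor is $\bigl(t_{w_{\omega_1}}^{-\hf}W_{\omega_1}(t)\bigr)^{-1}=(t_n^{\hf}+t_n^{-\hf})^{-1}$, and $W^{\omega_2}=\{e,\,s_2,\,s_1 s_2,\,s_2 s_1 s_2\}$ with longest element $v_{\omega_2}=s_2 s_1 s_2$. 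Using the reduced words $t(\ep_1)=s_0 s_1 s_2 s_1$ and $t(\ep_2)=s_1 s_0 s_1 s_2$ from \eqref{eq:tep}, one computes the shortest coset representatives: $w(\omega_1)=s_0\in t(\omega_1)W_0$ (a single reflection) and $w(\omega_2)=s_0 s_1 s_0\in t(\omega_2)W_0$. Thus the alcove walks entering Theorem~\ref{thm:LR} have length one, of type $\oa{w(\omega_1)}^{-1}=(0)$: for each $v\in W^{\omega_2}$ the walk begins at the alcove $(v w(\omega_2))^{-1}A$ and is either a single crossing or a single folding, so there are at most twelve colored alcove walks to consider (four starting alcoves; a crossing or a folding; a folding in either of the two colours), and most of them will drop out. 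I would list the survivors in a table together with $A_p$, $B_p$, $C_p$, as in Table~\ref{tab:AW_P}.

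For each such walk $p$ I would: keep it only if both its alcoves lie in the dominant chamber $C$; then read off $e(p)$, its decomposition $e(p)=t(\wgt(p))\dir(p)$ and the target weight $-w_0.\wgt(p)$ (note $w_0=-\mathrm{id}$ on $\frh_\bR^*$ for type $C_2$, so $-w_0.\wgt(p)=\wgt(p)$, which in the surviving cases is $\omega_1+\omega_2$, $\omega_2$ or $\omega_1$); compute $A_p=\prod_{\alpha\in w(\omega_2)^{-1}\cL(v^{-1},v_{\omega_2}^{-1})}\rho(\alpha)$ (between zero and three $\rho$-factors, according to $\ell(v)$), $B_p=\prod_{\alpha\in\cL(t(\wgt(p))w_0,e(p))}\rho(-\alpha)$, and $C_p=C_{p,1}$ from the single step via the case list of Proposition~\ref{prp:EPE} ($1$ for a positive crossing, $n_0$ for a negative crossing, $-\psi_0^{\pm}$ for a gray folding, $\psi_0^{\pm}$ for a black folding, the colour being forced by comparison with the length-one Ram--Yip walk for $w(\omega_1)$ that enters $f_{p^*}$). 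Substituting $Y^\beta 1=q^{\sh(\beta)}t^{\hgt(\beta)}$ from \eqref{eq:Ylam1} and expanding $\rho$, $\psi_0^{\pm}$, $n_0$ via their explicit formulas in Theorem~\ref{thm:main} makes every coefficient an explicit rational function of $q$ and the five parameters. Summing over $v$ and over the two walks of each $v$: the walks landing on $\omega_1+\omega_2$ have $A_pB_pC_p$ summing to $t_n^{\hf}+t_n^{-\hf}$ (so $P_{\omega_1+\omega_2}$ enters with coefficient $1$, as forced by monicity); the walks landing on $\omega_2$ collapse, after combining their $\psi_0^-$-contributions into $-\psi_0^-(q^3 t_0 t_1)+\psi_0^-(q t_0 t_1)$, to the displayed $F$; those landing on $\omega_1$ give $G$; and any other potential target occurs with coefficient $0$ because the pertinent $\rho$- or $n_0$-factor vanishes (or no admissible walk reaches it).

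The only genuine difficulty is the bookkeeping: deciding which of the dozen candidate walks actually remain in $C$, pinning down the sets $\cL(v^{-1},v_{\omega_2}^{-1})$ and $\cL(t(\wgt(p))w_0,e(p))$ and the black/gray colourings for each, and then carrying out the elementary but somewhat lengthy simplification that collapses the double sum to exactly the three displayed terms. Nothing conceptually new beyond Proposition~\ref{prp:AW_P} is involved; the statement is a direct, if careful, application of Theorem~\ref{thm:LR}.
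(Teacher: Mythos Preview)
Your plan is correct and matches the paper's own proof essentially step for step: the paper also specialises Theorem~\ref{thm:LR} to $\lambda=\omega_1$, $\mu=\omega_2$, computes the same stabilisers, coset representatives $W^{\omega_2}=\{e,s_2,s_1s_2,s_2s_1s_2\}$, shortest elements $w(\omega_1)=s_0$ and $w(\omega_2)=s_0s_1s_0$, and prefactor $t_2^{-\hf}+t_2^\hf$, then tabulates for each $v$ the three length-one colored walks (one crossing, one gray folding, one black folding) together with their $A_p,B_p,C_p$ and target weight, exactly as you outline. The only cosmetic difference is that the paper organises its tables by the associated $p^*$ rather than by colour, but the twelve walks and the ensuing collapse to the three displayed terms are the same.
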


\begin{proof}
Applying Theorem \ref{thm:LR} to $n=2$, $\lambda=\omega_1$ and $\mu=\omega_2$, we have
\[
 P_{\omega_1}(x)P_{\omega_2}(x)=
 \frac{1}{t^{-\hf}_{w_{\omega_1}} W_{\omega_1}(t)}
 \sum_{v\in W^{\omega_2}}
 \sum_{p\in\Gamma_2^C(\oa{w(\omega_1)}^{-1},(vw(\omega_2))^{-1})}
 A_p B_p C_p P_{-w_0.\wtt(p)}(x).
\]
We have $W_{\omega_1}=\pr{e, s_2}$, $W^{\omega_2}=\pr{e,s_2,s_1s_2,s_2s_1s_2}$ 
and $w(\omega_1)=s_0$, $w(\omega_2)=s_0s_1s_0$.
The denominator $t_{w_{\omega_1}}^{-\hf}W_{\omega_1}(t)$ can be calculated 
with the help of $w_{\omega_1}=s_2$ as 
$t_{w_{\omega_1}}^{-\hf}W_{\omega_1}(t)
 = t_{s_2}^{-\hf} (t_e+t_{s_2}) = t_2^{-\hf} +t_2^\hf$.

Next we consider the term $A_p B_p C_p$.
The alcove walk $p^*$ associated to 
$p \in \Gamma_2^C(\oa{w(\omega_1)}^{-1},(vw(\omega_2))^{-1})$
is given by either $p_1^*$ or $p_2^*$ in Table \ref{tab:2:p*}.
\begin{table}[htbp]
\centering
\includegraphics[bb= 0 0 224 87]{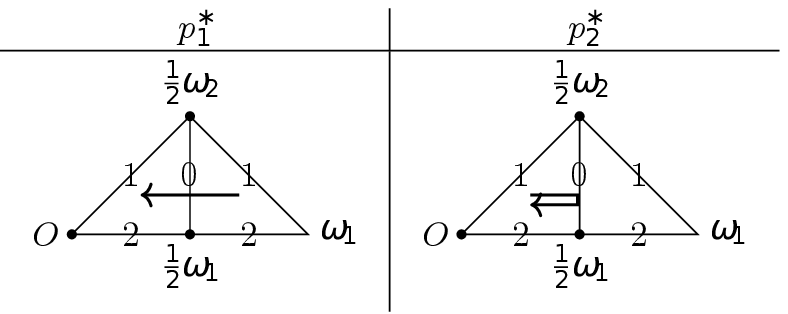}
\caption{Classification of $p^*$}\label{tab:2:p*}
\end{table}

Let us calculate the term $A_p = \prod_{\alpha} \rho(\alpha)$.
The range of the product is
\[
 w(\mu)^{-1} \cL(v^{-1},v_\mu^{-1}) = 
 \cL\bigl((v w(\omega_2))^{-1},t(-w_0 \omega_2) \bigr),
\]
and according to $v \in W^{\omega_2}=\pr{e,s_2,s_1s_2,s_2s_1s_2}$ it is given by 
\[
 \cL\bigl((vw(\omega_2))^{-1},t(-w_0\omega_2)\bigr) =
 \begin{dcases}
  \pr{2\delta-2\ep_1,2\delta-(\ep_1+\ep_2),2\delta-2\ep_2} \quad & (v=e) \\
  \pr{2\delta-(\ep_1+\ep_2),2\delta-2\ep_2} \quad & (v=s_2) \\
  \pr{2\delta-2\ep_2} \quad & (v=s_1s_2) \\
  \emptyset \quad & (v=s_2s_1s_2).
 \end{dcases}
\]
Then we have
\[
 A_p=
 \begin{dcases}
  \rho(2\delta-2\ep_1)\rho(2\delta-(\ep_1+\ep_2))\rho(2\delta-2\ep_2)
  \quad & (v=e) \\
  \rho(2\delta-(\ep_1+\ep_2))\rho(2\delta-2\ep_2) \quad & (v=s_2) \\
  \rho(2\delta-2\ep_2) \quad & (v=s_1s_2) \\
  1 \quad & (v=s_2s_1s_2)
\end{dcases}.
\]

For each $v \in W^{\omega_2}$ and the corresponding colored alcove walks $p$, 
we calculate $B_p$ and $C_p$. 
The results are shown in Tables \ref{tab:2:e}--\ref{tab:2:s_2s_1s_2}.
The symbol in the column of $p$ such as $X_{1 1}$ and $X_{1 2}$ 
refers to the corresponding picture in Figure \ref{fig:4.4}.

\begin{table}[htbp]
\centering
\begin{tabular}{c|c|c|c|c}
  $p^*$ & $p$ & $B_p$ & $C_p$ & $-w_0\wtt(p)$
\\ \hline
$p_1^*$ & $X_{1 1}$ & $\rho(-2\ep_2)\rho(-\delta+(\ep_1+\ep_2))$ & $n_0(qt_0t_1)$ 
        & $\omega_1$
\\ \cline{2-5}
        & $X_{1 2}$ & $\rho(-(\ep_1-\ep_2))$ & $-\psi_0^+(qt_0t_1)$ & $\omega_2$
\\ \hline
$p_2^*$ & $X_2$ & $\rho(-(\ep_1-\ep_2))$ & $\psi_0^+(q t_0t_1)$ & $\omega_2$
\end{tabular}
\caption{Colored lcove walks in the case $v=e$}
\label{tab:2:e}
\end{table}

\begin{table}[htbp]
\centering
\begin{tabular}{c|c|c|c|c}
  $p^*$ & $p$ & $B_p$ & $C_p$ & $-w_0\wtt(p)$ 
\\ \hline
  $p_1^*$ & $Y_{1 1}$ 
& $\rho(-2\ep_2)\rho(-2\delta+2\ep_1) \rho(-\delta+(\ep_1+\ep_2))$ 
& $n_0(q t_0t_1)$ & $\omega_1$
\\ \cline{2-5}
& $Y_{1 2}$ & $\rho(-(\ep_1-\ep_2))\rho(-2\delta+2\ep_1)$ 
& $-\psi_0^+(q t_0t_1)$ & $\omega_2$
\\ \hline
  $p_2^*$ & $Y_2$ & $\rho(-(\ep_1-\ep_2))\rho(-2\delta+2\ep_1)$ 
& $\psi_0^+(q t_0t_1)$ & $\omega_2$
\end{tabular}
%

\caption{Colored alcove walks in the case $v=s_2$}
\label{tab:2:s_2}
\end{table}

\begin{table}[htbp]
\centering
\begin{tabular}{c|c|c|c|c}
$p^*$ & $p$ & $B_p$ & $C_p$ & $-w_0\wtt(p)$ 
\\ \hline
$p_1^*$ & $Z_{1 1}$ & $1$& $1$ & $\omega_1+\omega_2$ 
\\ \cline{2-5}
& $Z_{1 2}$ 
& $\rho(-2\delta+(\ep_1+\ep_2))\rho(-2\delta+2\ep_1)\rho(-(\ep_1-\ep_2))$
& $-\psi_0^-(q^{3}t_0t_1)$ & $\omega_2$ 
\\ \hline
$p_2^*$ & $Z_2$ 
& $\rho(-2\delta+(\ep_1+\ep_2))\rho(-2\delta+2\ep_1)\rho(-(\ep_1-\ep_2))$
& $\psi_0^-(q t_0t_1)$ & $\omega_2$ 
\end{tabular}
\caption{Colored alcove walks in the case $v=s_1 s_2$}
\label{tab:2:s_1s_2}
\end{table}

\begin{table}[htbp]
\centering
\begin{tabular}{c|c|c|c|c}
  $p^*$ & $p$ & $B_p$ & $C_p$ & $-w_0\wtt(p)$ 
\\ \hline
  $p_1^*$ & $W_{1 1}$ & $\rho(-2\delta+2\ep_2)$ & $1$ & $\omega_1+\omega_2$
\\ \cline{2-5} 
& $W_{1 2}$ 
& $\rho(-2\delta+2\ep_2)\rho(-2\delta+(\ep_1+\ep_2))
   \rho(-2\delta+2\ep_1)\rho(-(\ep_1-\ep_2))$
& $-\psi_0^-(q^3 t_0t_1)$ & $\omega_2$ 
\\ \hline
  $p_2^*$ & $W_2$
& $\rho(-2\delta+2\ep_2)\rho(-2\delta+(\ep_1+\ep_2))
   \rho(-2\delta+2\ep_1)\rho(-(\ep_1-\ep_2))$
& $\psi_0^-(q t_0t_1)$ & $\omega_2$ 
\end{tabular}
\caption{Colored alcove walks in the case $v=s_2s_1s_2$}
\label{tab:2:s_2s_1s_2}
\end{table}

The claim is now obtained by summing the terms $A_p B_p C_p P_{-w_0\wtt(p)}(x)$.
\end{proof}

\begin{figure}[htbp]
\includegraphics[bb= 0 0 414 377]{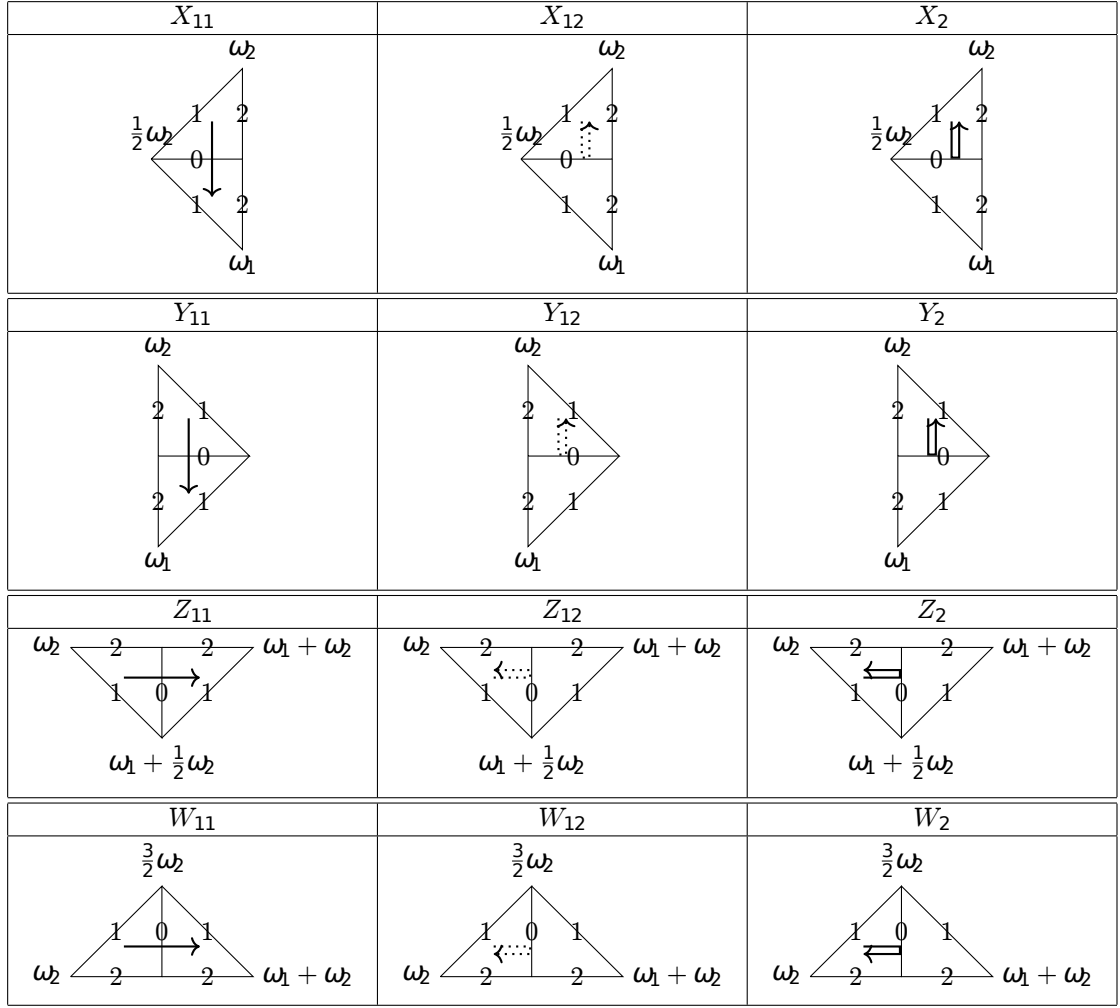}
\caption{Colored alcove walks in Proposition \ref{prp:rk2}}
\label{fig:4.4}
\end{figure}

\newpage

\subsection{Future works}

As a concluding remark, 
let us give some problems which we would like to explore in future works.
\begin{itemize}[nosep]
\item 
Specialization of Theorem \ref{thm:LR} to 
Macdonald polynomials of type $B_n$, $C_n$ and $BC_n$.
\item 
Simplified Pieri formulas and specialization to $B_n$, $C_n$ and $BC_n$ types.
\item 
Tableau-type formulas for Koornwinder polynomials from Theorem \ref{thm:LR}.
\end{itemize}

\subsection*{Acknowledgements}

The author would like to thank the thesis adviser Shintaro Yanagida 
for the guidance and careful reading of the manuscript. 
He would also like to thank Masatoshi Noumi for the explanation 
on the Macdonald-Cherednik theory and Koornwinder polynomials 
through the master course in Kobe University,
and also thank Satoshi Naito for his important comments.

\addcontentsline{toc}{section}{References}

\end{document}